\documentclass[12pt,reqno]{amsart}
\usepackage{amssymb}
\usepackage{mathtools}
\usepackage[dvipsnames]{xcolor}
\usepackage{hyperref}
\usepackage{dsfont}
\usepackage{enumitem}
\usepackage{dsfont}
\usepackage[pdftex]{graphicx}
\usepackage{caption}
\usepackage[labelfont=]{subcaption}
\usepackage{marginnote}
\usepackage{ragged2e}

\usepackage{tikz}
\usetikzlibrary{calc,decorations.pathreplacing,positioning}

\newtheorem{theorem}{Theorem}[section]
\newtheorem{definition}[theorem]{Definition}

\newtheorem{lemma}[theorem]{Lemma}

\newtheorem{remark}[theorem]{Remark}
\newtheorem{assumption}[theorem]{Assumption}

\newtheorem{observation}[theorem]{Observation}
\newtheorem*{claim*}{Claim}

\newtheorem*{remark*}{Remark}

\newcommand{\R}{\mathbb{R}}  
\newcommand{\Z}{\mathbb{Z}}
\newcommand{\N}{\mathbb{N}}

\newcommand*\defn[1]{\textit{#1}}
\newcommand*\interior[1]{#1^\circ}
\newcommand*\closure[1]{\overline{#1}}

\newcommand*\todo[2][]{{\color{red} \ifx\\#1\\ TODO\else TODO(#1)\fi : #2}}

\newcommand{\nocontentsline}[3]{}

\DeclareMathOperator{\ent}{ent}
\DeclareMathOperator{\Ent}{Ent}

\DeclareMathOperator{\Lip}{Lip}

\DeclareMathOperator{\Adm}{Adm}
\DeclareMathOperator{\diam}{diam}

\DeclarePairedDelimiter\ceil{\lceil}{\rceil}
\DeclarePairedDelimiter\floor{\lfloor}{\rfloor}

\let\oldtexttt\texttt
\renewcommand*\texttt[1]{\textnormal{\oldtexttt{#1}}}

\mathtoolsset{showonlyrefs}

\setcounter{tocdepth}{1}

\begin{document}

\title[A variational principle with minimal assumptions]
	{Deducing a variational principle with minimal \textit{a~priori}
	assumptions}

\author{Andrew Krieger}
\address{Department of Mathematics, University of California, Los Angeles}
\email{akrieger@math.ucla.edu}

\author{Georg Menz}
\address{Department of Mathematics, University of California, Los Angeles}
\email{gmenz@math.ucla.edu}

\author{Martin Tassy}
\address{Dartmouth College, Hanover}
\email{mtassy@math.dartmouth.edu}


\date{\today}

\begin{abstract}
We study the well-known variational and large deviation principle for graph homomorphisms
from~$\mathbb{Z}^m$ to~$\mathbb{Z}$.
We provide a robust method to deduce those principles
under minimal \textit{a priori} assumptions. The only ingredient specific to the model is a discrete Kirszbraun theorem i.e.~an extension theorem for graph homomorphisms. All other ingredients are of a general nature not specific to the model. They include elementary combinatorics, the compactness of Lipschitz functions and a simplicial Rademacher theorem. Compared to the literature, our proof does not need any other preliminary results like e.g.~concentration or strict convexity of the local surface tension. Therefore, the method is very robust and extends to more complex and subtle models, as e.g.~the homogenization of limit shapes or graph-homomorphisms to a regular tree.
\end{abstract}

\thanks{This research has been partially supported by NSF grant DMS-1712632.}

\maketitle


\tableofcontents

\section{Introduction}


Recently the study of limit shapes has attracted a lot of research.
Limit shapes appear in many models, including
domino tilings and dimer models
(e.g.~\cite{Kas63,CEP96,CKP01};
see Figures~\ref{f_limit_domino},~\ref{f_limit_ribbon}, and~\ref{f_limit_bar}),
polymer models,
lozenge tilings (e.g.~\cite{Des98,LRS01,Wil04}),
Ginzburg-Landau models (e.g.~\cite{DeGiIo00,FuOs04}),
Gibbs models (e.g.~\cite{She05}),
the Ising model (e.g.~\cite{DKS92,Cer06}),
asymmetric exclusion processes (e.g.~\cite{FS06}),
random matrices (e.g.~\cite{Wig59,KS99}),
sandpile models (e.g.\cite{LP08}),
the six vertex model (e.g.~\cite{BCG2016,CoSp16,ReSr16}),
and Young tableaux (e.g.~\cite{LS77,VK77,PR07}).
The appearance of limit shapes seems to be a universal phenomenon:
it has been rigorously proven for many models,
and it is strongly suggested by simulations for many additional models.\\

\begin{figure}[b]
	\centering
	\begin{subfigure}{0.3\textwidth}
		\includegraphics[width=\textwidth]{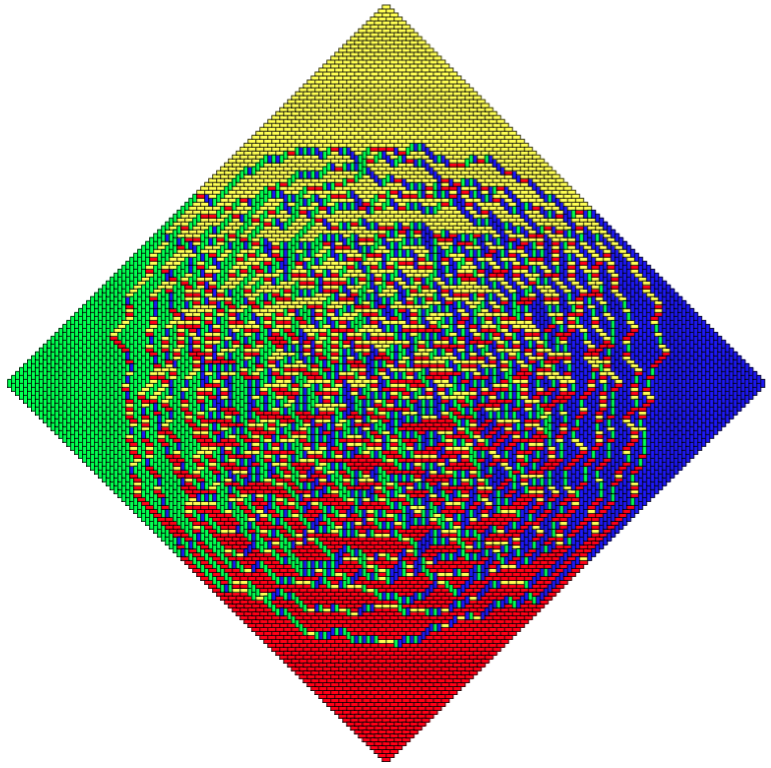}
		\caption{\RaggedRight Domino tiling (see~\cite{CKP01}).}
		\label{f_limit_domino}
	\end{subfigure}
	\begin{subfigure}{0.3\textwidth}
		\includegraphics[width=\textwidth]{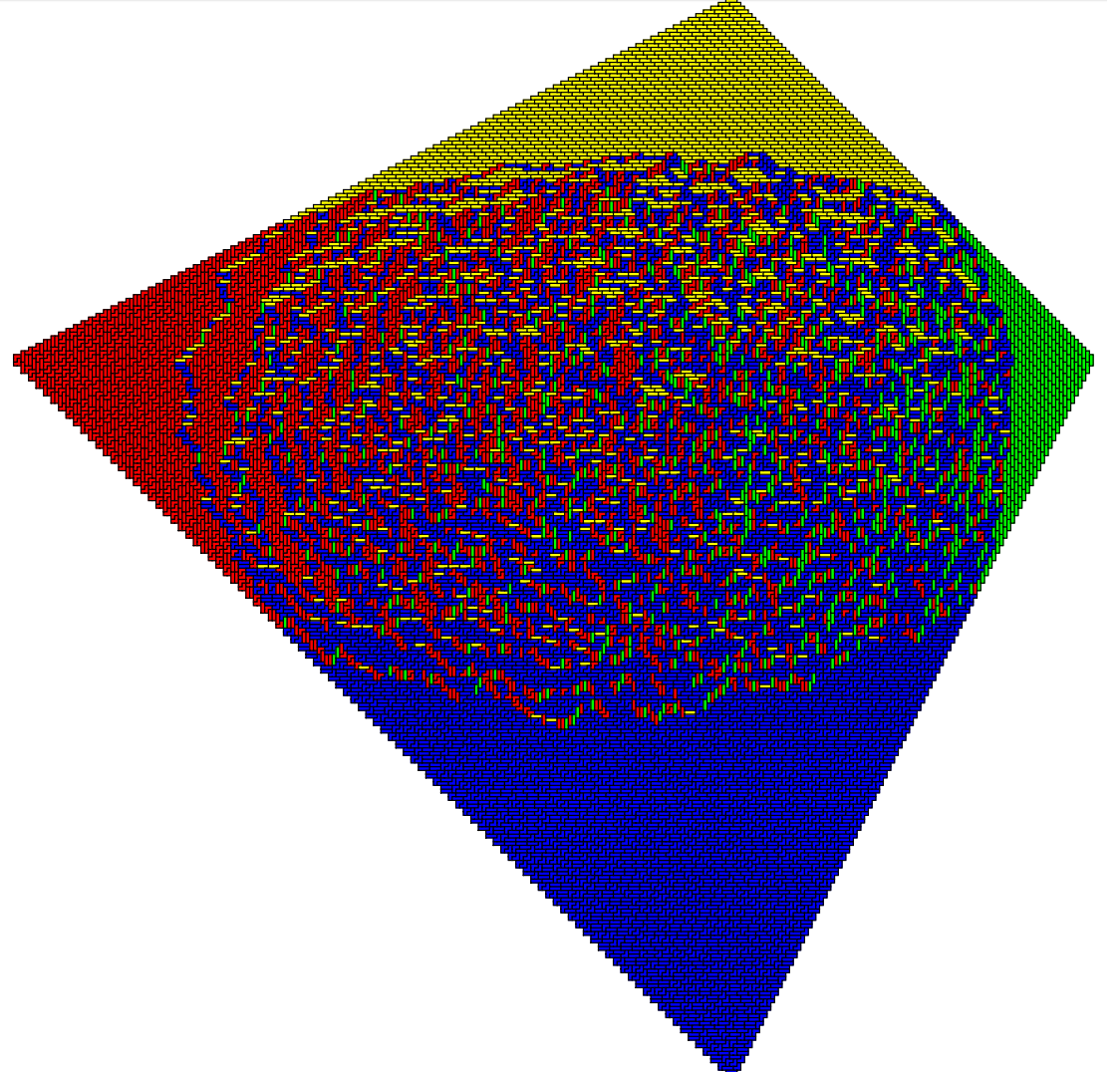}
		\caption{\RaggedRight Ribbon tiling (see~\cite{She02}).}
		\label{f_limit_ribbon}
	\end{subfigure}
	\begin{subfigure}{0.3\textwidth}
		\includegraphics[width=\textwidth]{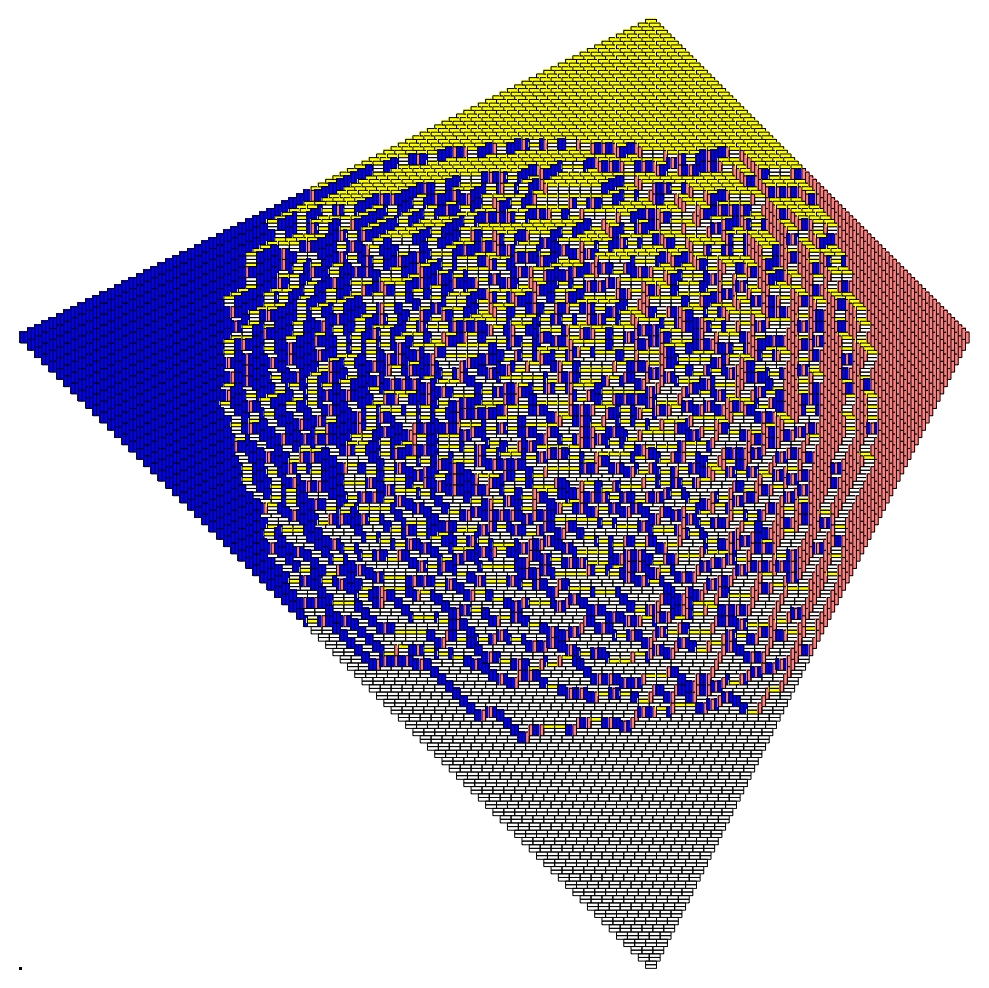}
		\caption{\RaggedRight Tiling by $3 \times 1$ bars
		(see~\cite{KeKe92}).}
		\label{f_limit_bar}
	\end{subfigure}
	\begin{subfigure}{0.3\textwidth}
		\includegraphics[width=\textwidth]{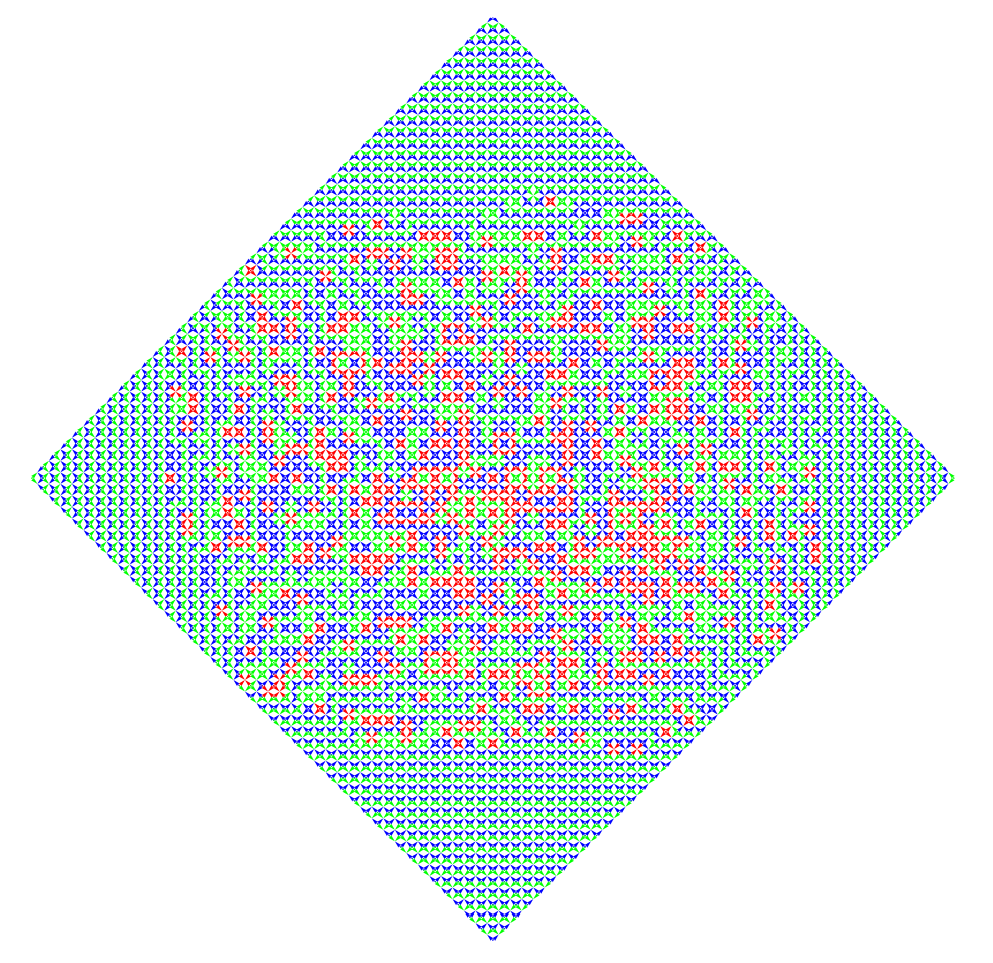}
		\caption{\RaggedRight Graph homomorphisms
		into the $3$-regular tree (see~\cite{MeTa16}).}
		\label{f_limit_3tree}
	\end{subfigure}
	\begin{subfigure}{0.3\textwidth}
		\includegraphics[width=\textwidth]{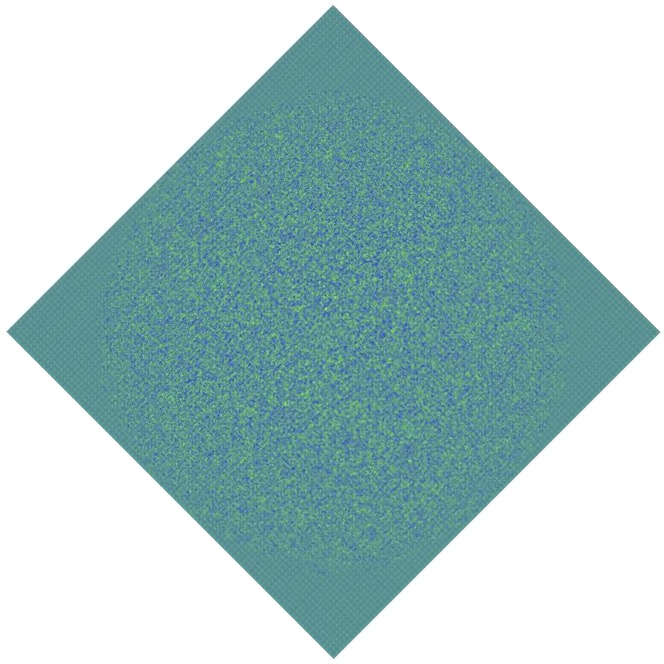}
		\caption{\RaggedRight Graph homomorphism into $\Z$
		(the model studied in this article).}
		\label{f_aztec_constant_2d}
	\end{subfigure}\mbox{$\qquad \qquad $}
\caption{Examples of limit shapes.%
}
\end{figure}

One can explain the appearance of limit shapes
with the help of three technical results.
Those are a profile theorem, a variational principle,
and a large deviations principle.
The profile theorem asymptotically counts the number of microscopic states,
that contribute to a particular macroscopic state, i.e.~a limiting profile.
Given a limiting profile $h$, we might write the profile theorem as
\[
	\Ent_n \bigl(
		\{ \text{microscopic states corresponding to $h$} \} \bigr)
	\approx \Ent(h) \,,
\]
where the microscopic entropy
\[
	\Ent_n(A) := - \frac{1}{n^m} \log |A|
\]
captures the number of microscopic states in a set $A$,
and the macroscopic entropy functional
is given by
\[
	\Ent(h) = \int \ent(\nabla h(x)) \, dx \,.
\]
The integrand $\ent(\nabla h(x))$ is a local quantity,
depending only on the gradient $\nabla h$ at $x$.
This article calls $\ent(\cdot)$ the local surface tension
(see Definition~\ref{d_loc_surf_tens} below).
See also Theorem~\ref{th_profile} for a more precise statement.\\

The variational principle asymptotically characterizes
the number of microscopic states, i.e.~the microscopic entropy~$\Ent_n$,
via a variational problem.
For large system sizes $n$, the microscopic entropy of the system $\Ent_n$
is given by minimizing the macroscopic entropy $\Ent(h)$
over all admissible limiting profiles $h$.
In formula, the variational principle states
(see Theorem~\ref{th_varnprin} below)
\[
	\Ent_n \approx \inf_h \Ent(h) \,.
\]

The large deviations principle complements the variational principle.
It characterizes the asymptotic fraction of microscopic states
in ``natural'' subsets of the microscopic state space,
such as (open or closed) balls around a limiting profile.
Approximately speaking, the large deviations principle states
that the fraction of microscopic states in a suitable subset $A$
decays exponentially in $n$,
with rate given by minimizing a rate function $I(h)$
over the limiting profiles corresponding to $A$
(see Theorem~\ref{th_ldp} below).
The rate function is, up to normalization,
the same as the macroscopic entropy $\Ent(h)$ above.
\\

When the macroscopic entropy has a unique minimizer,
the large deviations principle implies that almost all microscopic states
must approximate this entropy-minimizing profile.
Although the current article does not prove uniqueness of this minimizer,
we briefly discuss uniqueness in Section~\ref{ss_main_results},
after the statement of the large deviations principle (Theorem~\ref{th_ldp}).
\\

The current article continues the line of research started in~\cite{MeTa16},
which strives to develop a robust theory
of variational principles and limit shapes.
The intention of this article is not to provide new results
on subtle and technically challenging models
(as for example in~\cite{MeTa16}) but to provide and explain a simplified method
that only relies on minimal \textit{a priori} assumptions.
In order not to be distracted by unnecessary technical difficulties
we consider the most simple setting:
graph homomorphisms from~$\mathbb{Z}^m$ to~$\mathbb{Z}$.
\\

In more detail, we consider a macroscopic (continuum) domain $R \subset \R^m$,
and a sequence of microscopic (discrete) domains $R_n \subset \Z^m$
such that $\tfrac{1}{n} R_n \to R$ in the Hausdorff metric.
A microscopic state or a height function is a graph homomorphism
$h_{R_n}: R_n \to \Z$ for some $n$,
and a macroscopic state or asymptotic height function is a Lipschitz function
$h_R: R \to \R$.
In the two-dimensional case, this model is equivalent to the six-vertex model
with uniform weights (cf.~\cite{Henk77,ChPeShTa18}).
Full details of the model under study are given in Section~\ref{s_setting}.
\\

Our method emerges from distilling the core arguments
of~\cite{CKP01,She05,MeTa16}.
Compared to those works, our method does not rely on
explicit formulas for the local surface tension, strict convexity,
concentration inequalities, or the FKG inequality.
The robustness of this method is illustrated
in the companion article~\cite{KMT17}.
There, we show the homogenization of the variational principle
of graph homomorphisms to~$\mathbb{Z}$.
In homogenization, homomorphisms are not chosen according to the uniform measure
but instead certain heights are preferred or penalized according to a random
field.
Mathematically the height function is sampled from a Gibbs measure
with respect to a randomized Hamiltonian.
The limit shape may change drastically;
for example, when the random field is unbounded,
simulations show the formation of terraces.
See Figures~\ref{f_homog_2d} and~\ref{f_homog_3d} for examples.
We hope that the method outlined in this article
can serve as a guiding principle for deducing
the variational principle and related results
for more complex models.
\\

\begin{figure}[ht]
	\centering
	\begin{subfigure}{0.23\textwidth}
		\includegraphics[width=1.414214\textwidth]{random_field_constant.jpg}
		\caption{Constant field~$\omega$.}\label{f_aztec_constant}
	\end{subfigure} \mbox{ \color{white} $\quad$ test }
	\begin{subfigure}{0.23\textwidth}
		\includegraphics[width=\textwidth, angle=45]{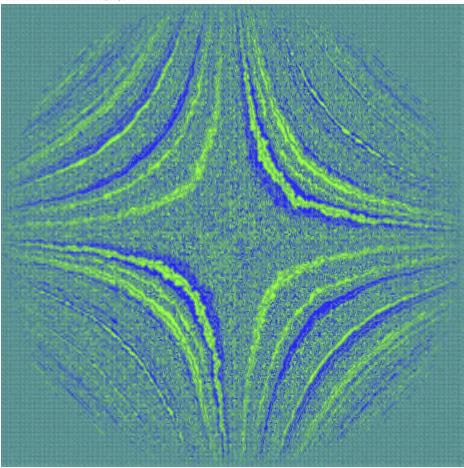}
		\caption{Bounded random field~$\omega$.}\label{f_aztec_uniform}
	\end{subfigure} \mbox{ \color{white} $\quad$ test}
	\begin{subfigure}{0.23\textwidth}
		\includegraphics[width=\textwidth, angle=45]{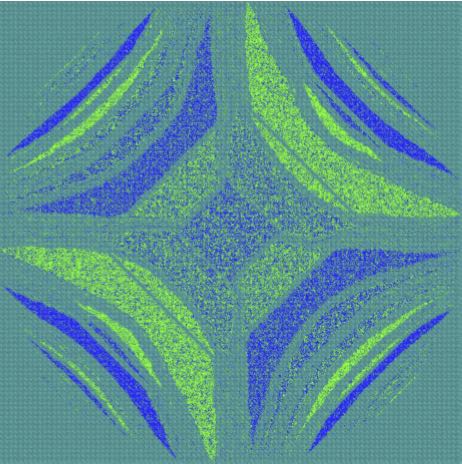}
		\caption{Unbounded random field~$\omega$.} \label{f_aztec_normal}
	\end{subfigure}
	\caption{Quenched Aztec diamonds, 2d perspective.}
	\label{f_homog_2d}
\end{figure}

\begin{figure}[ht]
    \centering
    \begin{subfigure}{0.4\textwidth}
        \includegraphics[width=\textwidth, angle=0]{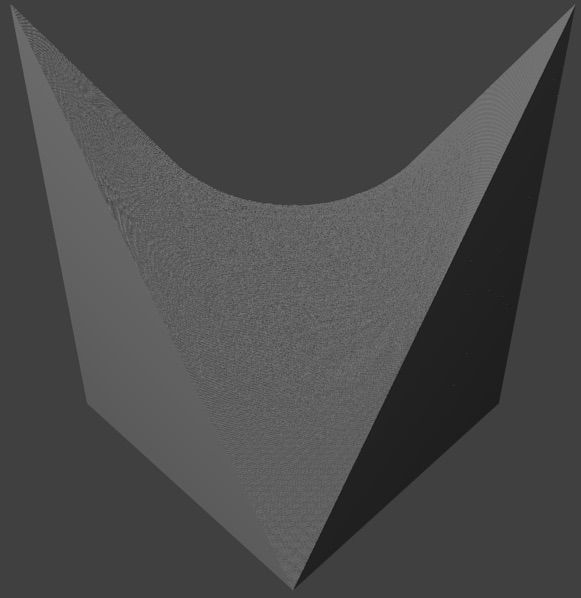}
\caption{Constant field~$\omega$, the same limit-shape as in Figure~\ref{f_aztec_constant}.}\label{f_aztec_constant_3d}
    \end{subfigure} \mbox{ \color{white} $\quad$ test }
\begin{subfigure}{0.4\textwidth}
        \includegraphics[width=\textwidth, angle=0]{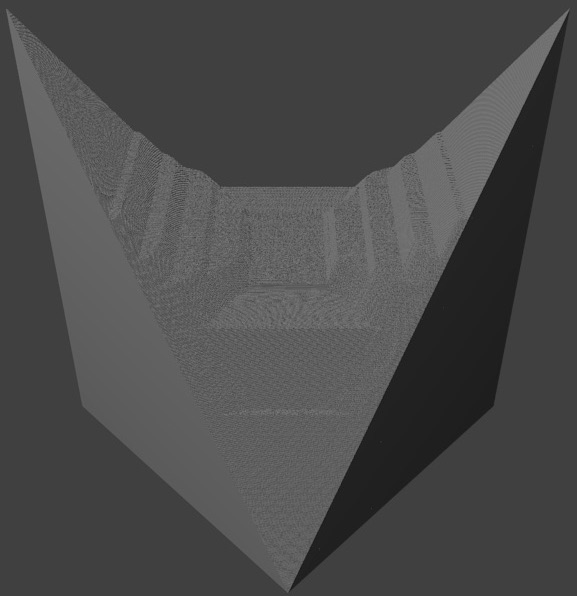}
 \caption{Unbounded random field~$\omega$, the same limit shape as in Figure~\ref{f_aztec_normal}.} \label{f_aztec_normal_3d}
 \end{subfigure}
\caption{Quenched Aztec diamonds, 3d perspective.}
\label{f_homog_3d}
 \end{figure}

As hinted above, the three main results of this article are
the profile theorem (Theorem~\ref{th_profile}),
the variational principle (Theorem~\ref{th_varnprin}),
and the large deviations principle (Theorem~\ref{th_ldp}).
The majority of the effort in this article
goes into proving the profile theorem.
The proof starts by proving the profile theorem in a special case
(where the domain $R$ is the union of simplices and the limiting profile $h_R$
is piecewise affine),
then bootstraps this result to the general case.
The main idea of each step in this proof is clear,
although some care is needed to account for all the details.
\\

We call attention to two ingredients in the proof.
The first ingredient is the simplicial Rademacher theorem,
so called because it approximates a Lipschitz function $h_R$
uniformly over a large portion of its domain
by a piecewise affine approximation $h_K$,
where the ``pieces'' on which $h_K$ is affine are simplices.
This approximation gives control over both the direct error $|h_K - h_R|$
and over the error in the derivatives $|\nabla h_K - \nabla h_R|$.
Compared to the classical Rademacher theorem
which states that $h_R$ is almost everywhere differentiable,
the simplicial Rademacher theorem is a surprisingly strong approximation result.
This seems like a standard result
but the authors have not found it stated in this form
in the random surfaces literature,
so we give details of the proof.
\\

In order to exploit the simplicial Rademacher theorem,
we need robustness of both the macroscopic and microscopic entropy,
under changes both to the limiting profile and to the domain.
Robustness of the macroscopic entropy follows from elementary analysis,
because the simplicial Rademacher approximation
has derivative $\nabla h_K$ close to $\nabla h_R$.
Robustness of the microscopic entropy rests largely upon the second ingredient
that we call attention to: a Kirszbraun theorem for graph homomorphisms
(see Theorem~\ref{th_kirszbraun}).
This theorem gives conditions under which a height function
$h_{R_n}: R_n \to \Z$ may be extended to a larger domain
$\tilde R_n \supseteq R_n$.
We expect that the main challenge in extending the method of this article
to other models will be proving a comparable extension theorem.
\\

The profile theorem can be used to prove the variational principle
and the large deviations principle.
Both proofs are similar, and rely on the local compactness
of the space of Lipschitz functions.
Moreover the two proofs are robust;
once the profile theorem is proven for a model
(with suitable macroscopic state space),
the variational principle and large deviations principle follow automatically.
We present the proof of the variational principle first and in greater detail.
For the large deviations principle we highlight the differences in proof,
and we also change notation (replacing symbols like $\Ent_n$ and $\Ent$),
in order to match the conventions of large deviations theory.
\\

\addtocontents{toc}{\protect\setcounter{tocdepth}{-1}}

\section*{Overview of remaining article}

In Section~\ref{s_setting}, we define the setting and formulate the main results of this article.
In Section~\ref{s_outline} we explain the main idea and the structure of the proofs of the main results. The details are then given in Sections~\ref{s_ent} through~\ref{s_ldp}.

\section*{Notation}

\begin{itemize}
\item $x$ and $y$ usually denote points in $\R^m$.
\item $z$ usually denotes a points in $\Z^m$.
\item For $x\in \R$, $\floor x$ denotes the largest integer $\le x$,
	and $\ceil x$ denotes the smallest integer $\ge x$.
\item For $x = (x_1, \dotsc, x_m) \in \R^m$,
	$\floor x := (\floor x_1, \dotsc, \floor x_m)$.
\item Given a set $A$ in some topological space,
	$\interior{A}$ and $\closure{A}$ denote the interior and closure of $A$
	respectively.
\item $R$ and $R_n$ are ``nice'' domains in $\R^m$ and $\Z^m$ respectively
	(see Assumption~\ref{a_domain} below).
\item $h_R: R \to \R$ is an asymptotic height function
	(i.e.~$1$-Lipschitz function).
\item $h_{R_n}: R_n \to \mathbb{Z}$ is a height function (i.e.~graph homomorphism).

\item $\theta(\delta)$ denotes a function
	with $\lim_{\delta \downarrow 0} \theta(\delta) = 0$.
\item $\theta_{a,b,c}(\delta)$ denotes a function
	with $\lim_{\delta \downarrow 0} \theta_{a,b,c}(\delta) = 0$,
	with rate of convergence depending only on the parameters $a,b,c$.
\item For a set~$A$ we denote with~$|A|$ either the cardinality of~$A$
	or the Lebesgue measure of~$A$.

\end{itemize}

\addtocontents{toc}{\protect\setcounter{tocdepth}{1}}

\section{Setting and main results} \label{s_setting}

In this section we formally describe the model under study,
and state the main results that we prove.
In describing the model we err on the side of verbosity and explicitness.
Some of the notations used are non-standard
(such as the $\theta$-notation for asymptotics
described in Section~\ref{ss_asymp}),
but these notations allow for relatively concise and (more importantly) precise
statements of the results and proofs to follow.

In Section~\ref{ss_givens} we will carefully introduce the basic model,
i.e.\@ height functions on ``nice'' subsets of $\Z^m$.
In Section~\ref{ss_canon_hf} we describe a canonical family of height functions.
In Section~\ref{ss_ent} we use these canonical height functions
to define the microscopic entropy,
then we go on to define the macroscopic entropy and surface tension.
In Section~\ref{ss_asymp} we introduce our asymptotic notation,
as mentioned above.
In Section~\ref{ss_main_results}, the main results of this article are stated.

\subsection{Objects of study} \label{ss_givens}

Given a graph $\Gamma = (V,E)$, we write ``$v_1 \sim v_2$'' if two vertices
$v_1, v_2 \in V$ are adjacent; i.e. if $\{v_1,v_2\} \in E$.
Given two graphs $\Gamma_1 = (V_1, E_1)$ and $\Gamma_2 = (V_2, E_2)$,
we recall that a \defn{graph homomorphism} is a function $\varphi: V_1 \to V_2$
such that whenever vertices $v, v' \in V_1$ are adjacent in $\Gamma_1$,
their images $\varphi(v), \varphi(v') \in V_2$ are adjacent in $\Gamma_2$.

In this article, we specialize to the case of graph homomorphisms from
certain subgraphs $R_n \subset \Z^m$ to a subgraph of $\Z$.
For $R_n \subset \Z^m$,
we write $R_n^c$ for the complement of $R_n$ in $\Z^m$,
and $\partial R_n := \{ z \in R_n \, | \, \exists z' \in R_n^c, \, z \sim z' \}$
for the (inner) boundary of $R_n$.

\begin{definition}[Height functions] \label{d_ht_func}
A \defn{height function} on $R_n$
is a graph homomorphism $h_{R_n}: R_n \to \Z$
that preserves parity, meaning that for $z = (z_1, \dotsc, z_m) \in \Z^m$,
\begin{equation} \label{e_ht_func_parity}
	h_{R_n}(z) \bmod 2 = z \bmod 2
	\enspace \biggl( := \biggl[ \sum_{i=1}^m z_i \biggr] \bmod 2 \biggr) \,.
\end{equation}

We call a height function $h_{\partial R_n}: \partial R_n \to \Z$,
defined on the boundary of $R_n$, a \defn{boundary height function}.
\end{definition}

We are interested in sequences of subgraphs $\{R_n: n \in \N\}$
that converge under a scaling limit to a ``nice'' region $R \subset \R^m$.
More specifically, we make the following assumptions.

\begin{assumption}[Assumptions on domains $R$ and $R_n$] \label{a_domain}
We assume that $R \subset \R^m$ is compact and connected
and that $R$ is the closure of its interior
(sets with the latter property are called \defn{regular closed sets};
see e.g.~\cite{StSe95}).

We assume that $R_n \subset \Z^m$
and $\partial R_n \subset R$ are connected as subgraphs of $\Z^m$,
and (for simplicity) we assume that $\tfrac{1}{n} R_n \subset R$.

We require that $\frac{1}{n} R_n \to R$ in the Hausdorff metric;
that is, the metric on $\mathcal{P}(\R^m) := \{A \subset \R^m\}$
defined by
\begin{equation} \label{e_hausdorff}
	d_H(A, B)
	:= \biggl( \: \sup_{x \in A} \inf_{y \in B} |x-y|_1 \biggr)
	\vee \biggl( \: \sup_{y \in B} \inf_{x \in A} |x-y|_1 \biggr) \,.
\end{equation}
\end{assumption}

\begin{remark}[On the choice of norm in~\eqref{e_hausdorff}]
\label{rem_hausdorff}
By equivalence of norms,
it does not matter which norm on $\R^m$ is used in~\eqref{e_hausdorff}.
Later, we will be interested primarily in the $\ell^1$ norm.
This is because the $\ell^1$ norm is the scaling limit of the graph distance
on $\Z^m$.
More precisely, if $x, x' \in \R^m$ and if $z_n, z_n' \in \Z^m$
satisfy $|\tfrac{1}{n} z_n - x|_1 < \tfrac{m}{n}$
and $|\tfrac{1}{n} z_n' - x'|_1 < \tfrac{m}{n}$,
then $\tfrac{1}{n} d_{\Z^m}(z_n, z_n') \to |x-x'|_1$,
where $d_{\Z^m}$ denotes the graph distance.
\end{remark}

For example, when $R$ is compact, convex polytope,
such as a hypercube or a simplex,
the sets $R_n := \{ z \in \Z^m \,|\, \tfrac{1}{n}z \in R \}$
satisfy Assumption~\ref{a_domain}.

\medskip

Just as the microscopic domains $R_n$ have a scaling limit,
so do the microscopic height functions $h_{R_n}: R_n \to \Z$.

\begin{definition}[Asymptotic height functions] \label{d_asymp_ht_func}
We call a function $h_R: R \to \R$ an \defn{asymptotic height function}
if $h_R$ is Lipschitz with Lipschitz constant at most $1$,
with respect to the $\ell^1$-norm on $\R^m$;
that is, if
\begin{equation} \label{e_asymp_ht_func}
	\operatorname{Lip}(h_R)
	:= \sup_{x \ne y \in R} \frac{|h_R(x) - h_R(y)|}{|x-y|_1}
	\le 1 \,.
\end{equation}
Likewise, if $h_{\partial R}: \partial R \to \R$ is $1$-Lipschitz
(with respect to the $\ell^1$-norm),
we call $h_{\partial R}$ an \defn{asymptotic boundary height function}.
\end{definition}

We assume that $h_{\partial R_n}: \partial R_n \to \Z$
are boundary height functions that converge (after rescaling) to an asymptotic
boundary height function $h_{\partial R}: \partial R \to \R$ in following sense:
for each $n$, let $d_n = d_H(\tfrac{1}{n}R_n, R)$.
Then, we say $\tfrac{1}{n} h_{\partial R_n} \to h_{\partial R}$ if and only if
\begin{equation} \label{e_bndy_fn_lim}
	\lim_{n \to \infty} \: \sup_{z \in \partial R_n} \:
		\sup_{\substack{
			x \in \partial R \\
			|x - \frac{1}{n} z|_1 \le d_n
		}} \:
			\biggl| \, \frac{1}{n} h_{\partial R_n}(z)
			- h_{\partial R}(x) \, \biggr| = 0 \,.
\end{equation}

\medskip

Now, we define a few families of height functions
and asymptotic height functions.
The sets of height functions from Definition~\ref{d_ht_func_sets} below
appear frequently in entropies,
and the sets of asymptotic height functions from
Definition~\ref{d_asymp_ht_func_sets} are important for the statement of the
variational principle (Theorem~\ref{th_varnprin}).

\begin{definition}[Sets of height functions] \label{d_ht_func_sets}
Let $R_n$ be a microscopic domain as above,
let $h_{R_n}: R_n \to \Z$ be a boundary height function,
and let $\delta > 0$.
We define:
\begin{align}
	M(R_n)
	&:= \bigl\{ h_{R_n}: R_n \to \R
	\, \bigl| \, \text{$h_{R_n}$ is a height function} \bigr\} \\
	M(R_n, h_{\partial R_n})
	&:= \bigl\{ h_{R_n} \in M(R_n)
	\, \big| \, h_{R_n}|_{\partial R_n} = h_{\partial R_n} \bigr\} \\
	M(R_n, h_{\partial R_n}, \delta)
	&:= \bigl\{ h_{R_n} \in M(R_n)
	\, \big| \, \sup_{z \in \partial R_n}
		|h_{R_n}(z) - h_{\partial R_n}(z)| < \delta n \bigr\} \,. \\
	B(R_n, h_R, \delta)
	&:= \bigl\{ h_{R_n} \in M(R_n)
	\, \big| \, \sup_{z \in R_n}
		|h_R(\tfrac{1}{n} z) - \tfrac{1}{n} h_{R_n}(z)| < \delta
	\bigr\} \,.
\end{align}

In the last definition, the expression ``$h_R(\tfrac{1}{n}z)$'' makes sense
because of the assumption that $\tfrac{1}{n}R_n \subset R$
in Assumption~\ref{a_domain}.
\end{definition}

\begin{definition}[Sets of asymptotic height functions]
\label{d_asymp_ht_func_sets}
Let $R \subset \R^m$ be a domain satisfying Assumption~\ref{a_domain},
let $h_{\partial R}: \partial R \to \R$
be an asymptotic boundary height function,
and let $\delta > 0$.
We define:
\begin{align}
	M(R)
	&:= \bigl\{ h_R: R \to \R \, \big| \,
		\text{$h_R$ is an asymptotic height function} \bigr\} \\
	M(R, h_{\partial R})
	&:= \bigl\{ h_R: R \to \R \, \big| \,
		h_R|_{\partial R} = h_{\partial R} \bigr\} \\
	M(R, h_{\partial R}, \delta)
	&:= \bigl\{ h_R: R \to \R \, \big| \,
		\forall x \in \partial R \,,\,
			|h_R(x) - h_{\partial R}(x)| \le \delta
	\bigr\} \,.
\end{align}
\end{definition}~

\subsection{Affine height functions}
\label{ss_canon_hf}

Affine height functions play an important role
in defining and studying the entropy of our model.
For an asymptotic height function $h_R: R \to \R$,
we mean by ``affine'' the usual property:
there exist $s \in [-1,1]^m$ and $b \in \R$ such that $h_R(x) = s \cdot x + b$.
The bounds on $s$ ensure that $h_R$
satisfies the Lipschitz property~\eqref{e_asymp_ht_func},
so all such functions are indeed asymptotic height functions
as per Definition~\ref{d_asymp_ht_func}.

On the microscopic domains $R_n$,
we consider best-possible approximations to affine functions.
Fix $s \in [-1,1]^m$ and $b \in \R$.
At a lattice point $z \in \Z^m$, we define
$h^{s \cdot x + b}_{R_n}(z)$ to be $s \cdot z + b$,
rounded to the nearest integer of correct parity
(see Figure~\ref{f_lin_ht_func}).
In the rest of this subsection, we formalize this definition,
verify that it actually does define a height function,
and check that it is consistent.

Let us introduce an auxiliary notation that is used only in this subsection.
Given a point $z = (z_1, \dotsc, z_m) \in \Z^m$,
we say $z$ has even or odd parity
as $(\sum_{i=1}^m z_i) \in \Z$ has even or odd parity respectively,
and we write $z \bmod 2$ for the parity of $z$.

Given $z \in \Z^m$ and $y \in \R$,
we write $[y]_{z \bmod 2}$ for the closest integer to $y$
that has parity $z \bmod 2$.
In case of a tie, i.e.\@ if $y$ is an integer that has opposite parity to $z$,
we arbitrarily choose to ``round up'' and set $[y]_{z \bmod 2} = y+1 \in \Z$.

For example, let $z = (1,2,3) \in \Z^3$ and $z' = (4,-6,7)$.
Then $z$ is an even point and $z'$ is an odd point.
So:
\begin{align}
		[5.4]_{z \bmod 2} &= 6 \,,
		&[-3]_{z \bmod 2} &= -2 \,, \\{}
		[5.4]_{z' \bmod 2} &= 5 \,,
		&[-3]_{z' \bmod 2} &= -3 \,. \\
\end{align}

Now, given $s \in [-1,1]^m$ and $b \in \R$,
we define the affine height functions $h^{s \cdot x + b}_{R_n}$ by
\begin{equation} \label{e_canon_hf_def}
	h_{R_n}^{s \cdot x + b}(z) := [s \cdot z + b]_{z \bmod 2} \, .
\end{equation}

Note that the symbol $x$ in the superscript of $h_{R_n}^{s \cdot x + b}$
is merely formal; ``$s \cdot x + b$'' should be read as
``the function mapping $x$ to $s \cdot x + b$''.
Moreover, the choice of domain $R_n$ in the subscript does not affect
the values of $h_{R_n}^{s \cdot x + b}$ at any point;
for any sets $A_n, B_n \subseteq \Z^m$ and any point $z \in A_n \cap B_n$,
one has $h_{A_n}^{s \cdot x + b}(z) = h_{B_n}^{s \cdot x + b}(z)$.
An example of a function $h_{R_n}^{s \cdot x + b}$
is provided in Figure~\ref{f_lin_ht_func}.

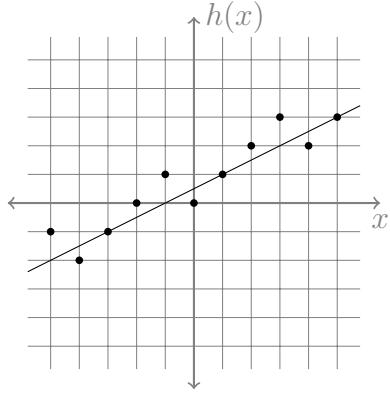
\begin{figure}
	\centering
	\begin{tikzpicture}[
		x=0.15in,y=0.15in,
		every circle/.style={radius=0.02in},
	]
		\draw[help lines] (-5.8,-5.8) grid[step=1] (5.8,5.8);
		\draw[<->,help lines,thick] (-6.5,0) -- (6.5,0)
			node [anchor=north] {$x$};
		\draw[<->,help lines,thick] (0,-6.5) -- (0,6.5)
			node [anchor=west] {$h(x)$};
		\path[draw] (-5.8,-2.4) -- (5.8, 3.4);
		\path[fill] (-5,-1) circle;
		\path[fill] (-4,-2) circle;
		\path[fill] (-3,-1) circle;
		\path[fill] (-2,0) circle;
		\path[fill] (-1,1) circle;
		\path[fill] (0,0) circle;
		\path[fill] (1,1) circle;
		\path[fill] (2,2) circle;
		\path[fill] (3,3) circle;
		\path[fill] (4,2) circle;
		\path[fill] (5,3) circle;
	\end{tikzpicture}
	\caption{An affine height function $h_{R_n}^{s \cdot x + b}$
	and the corresponding continuous affine function
	$x \mapsto s \cdot x + b$.
	Here $s = \tfrac{1}{2}$ and $b = \tfrac{1}{2}$.}
	\label{f_lin_ht_func}
\end{figure}

From the definition above, it is not clear that $h_{R_n}^{s \cdot x + b}$
are height functions.
This is the content of Lemma~\ref{l_lin_ht_func}.

\begin{lemma} \label{l_lin_ht_func}
Let $s \in [-1,1]^m$ and $b \in \R$.
For any adjacent points $z \sim z' \in \Z^m$,
the values $h_{R_n}^{s \cdot x + b}(z)$
and $h_{R_n}^{s \cdot x + b}(z')$ differ by exactly $1$.
\end{lemma}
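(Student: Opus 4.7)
The approach is a direct calculation built around the rounding operation $[\cdot]_{z \bmod 2}$. Set $y := s \cdot z + b$ and $y' := s \cdot z' + b$. Since $z \sim z'$ in $\Z^m$, the two points differ in exactly one coordinate by $\pm 1$, so $|y - y'| = |s_i| \le 1$, and crucially they have opposite parities in $\{0,1\}$. The plan is thus to bound the total difference
\begin{equation}
h_{R_n}^{s \cdot x + b}(z) - h_{R_n}^{s \cdot x + b}(z') = (y - y') + \bigl([y]_{z \bmod 2} - y\bigr) - \bigl([y']_{z' \bmod 2} - y'\bigr),
\end{equation}
and then to exploit the opposite-parity constraint on the left-hand side.

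The quantitative ingredient I need is the estimate $[y]_p - y \in (-1, 1]$ for every $y \in \R$ and every parity $p \in \{0,1\}$. This follows from the fact that integers of parity $p$ are spaced $2$ apart, so every real has a parity-$p$ integer within distance at most $1$; the upper endpoint is attained precisely when $y$ is itself an integer of the opposite parity, and in that case the convention ``round up'' forces $[y]_p = y + 1$ (never $y - 1$). This asymmetry of the half-open interval is essential: without the round-up convention the error would live in the closed interval $[-1, 1]$, and the ensuing bound would be too weak by exactly the amount needed to rule out a jump of $\pm 3$.

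Combining the three contributions in the displayed equation, the first in $[-1, 1]$, the second in $(-1, 1]$, and the third in $[-1, 1)$ after negation, the right-hand side lies in the \emph{open} interval $(-3, 3)$. On the other hand, $h_{R_n}^{s \cdot x + b}(z)$ and $h_{R_n}^{s \cdot x + b}(z')$ are integers of opposite parity by construction, so their difference is an odd integer. The only odd integers strictly between $-3$ and $3$ are $\pm 1$, which gives the claim. I do not anticipate any real obstacle: the lemma reduces to careful bookkeeping of the rounding error, and the one place it is easy to slip is in handling the tie-breaking rule, which is exactly what turns one of the endpoints of the error interval from closed to open and rules out a spurious difference of $\pm 3$.
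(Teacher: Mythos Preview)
Your proof is correct and follows essentially the same approach as the paper: both bound the difference by combining $|y - y'| \le 1$ with the rounding errors $|[y]_p - y| \le 1$, observe that the result is an odd integer, and then use the round-up tie-breaking convention to exclude $\pm 3$. The only cosmetic difference is that you encode the convention upfront as the half-open bound $[y]_p - y \in (-1,1]$ and read off the open interval $(-3,3)$ directly, whereas the paper first gets $\le 3$ via the triangle inequality and then argues by contradiction that equality would force both rounding errors to be $+1$, collapsing the difference to $|y-y'|\le 1$.
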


\begin{proof}
From the definition of $h_{R_n}^{s \cdot x + b}$,
we note two inequalities:
\begin{equation} \label{e_lin_ht_func_z}
	|h_{R_n}^{s \cdot x}(z) - (s \cdot z + b)| \le 1 \,,
\end{equation}
and
\begin{equation} \label{e_lin_ht_func_zp}
	|h_{R_n}^{s \cdot x + b}(z') - (s \cdot z' + b)| \le 1 \,.
\end{equation}
Additionally, since $s \in [-1,1]^m$, we have
\begin{equation} \label{e_lin_ht_func_dot}
|(s \cdot z + b) - (s \cdot z' + b)| \le 1 \,.
\end{equation}

By the triangle inequality,
$|h_{R_n}^{s \cdot x + b}(z) - h_{R_n}^{s \cdot x + b}(z')| \le 3$.
We shall show that equality cannot hold.
Since the difference $h_{R_n}^{s \cdot x + b}(z) - h_{R_n}^{s \cdot x + b}(z')$
is obviously an odd integer, it will follow that the difference is $\pm 1$.

Suppose towards a contradiction that
\[
	\bigl| h_{R_n}^{s \cdot x + b}(z) - h_{R_n}^{s \cdot x + b}(z') \bigr|
	= 3 \,.
\]
Then~\eqref{e_lin_ht_func_z} and~\eqref{e_lin_ht_func_zp} must be equalities.
From the definition of $[\cdot]_{z \bmod 2}$,
necessarily then $s \cdot z + b$ is an integer
with parity opposite that of $z$,
and so
\[
	h_{R_n}^{s \cdot x + b}(z) = (s \cdot z + b) + 1 \,.
\]

Likewise
\[
	h_{R_n}^{s \cdot x + b}(z') = (s \cdot z' + b) + 1 \,.
\]

But then
\[
	\bigl| h_{R_n}^{s \cdot x + b}(z) - h_{R_n}^{s \cdot x + b}(z') \bigr|
	= \bigl| (s \cdot z + b + 1) - (s \cdot z' + b + 1) \bigr|
	\le 1 \,.
\]
\end{proof}

We end this section with the following lemma.
The conclusion~\eqref{e_canon_hf_ineq_conc} is exactly what is needed later
to apply the Kirszbraun theorem (Theorem~\ref{th_kirszbraun}):

\begin{lemma}[Inequality for $h_{R_n}^{s \cdot x + b}$]
\label{lem_canon_hf_ineq}
Let $s, s' \in [-1,1]^m$, $b, b' \in \R$, and $z, z' \in \Z^m$.
If
\begin{equation} \label{e_canon_hf_ineq_hypo}
	\bigl| (s \cdot z + b) - (s' \cdot z' + b') \bigr|
	\le |z - z'|_1 \,,
\end{equation}
then
\begin{equation} \label{e_canon_hf_ineq_conc}
	\bigl| h_{\{z\}}^{s \cdot x + b}(z)
		- h_{\{z'\}}^{s' \cdot x + b'}(z') \bigr|
	\le |z - z'|_1 \,.
\end{equation}
\end{lemma}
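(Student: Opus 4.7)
The plan is to control the difference $h_{\{z\}}^{s\cdot x+b}(z) - h_{\{z'\}}^{s'\cdot x+b'}(z')$ by combining the hypothesis~\eqref{e_canon_hf_ineq_hypo} with bookkeeping of the rounding errors, and then to use a parity argument to remove a slack of $2$ that a naive triangle inequality would leave.

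For brevity let me write $a := h_{\{z\}}^{s\cdot x+b}(z)$, $a' := h_{\{z'\}}^{s'\cdot x+b'}(z')$, $q := s\cdot z + b$, $q' := s'\cdot z' + b'$, and $D := |z - z'|_1$. First I would set $P := a - q$ and $P' := a' - q'$. From the definition of $[\,\cdot\,]_{z \bmod 2}$, every real number lies within distance $1$ of the nearest integer of a prescribed parity, with equality precisely when the real is an integer of the opposite parity; in that tie case the convention ``round up'' forces the rounding error to be $+1$ rather than $-1$. Hence $P, P' \in (-1, 1]$, so in particular $|P - P'| < 2$. Writing $a - a' = (q - q') + (P - P')$ and using $|q - q'| \le D$, this yields $|a - a'| < D + 2$.

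Next I would invoke parity. The built-in parity in $[\,\cdot\,]_{z \bmod 2}$ gives $a \equiv z \pmod 2$ and $a' \equiv z' \pmod 2$, so $a - a'$ is an integer congruent to $\sum_i (z_i - z_i') \equiv \sum_i |z_i - z_i'| = D \pmod 2$. An integer of the same parity as $D$ with absolute value strictly less than $D + 2$ has absolute value at most $D$, which is~\eqref{e_canon_hf_ineq_conc}.

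The only subtlety, and hence the point to be checked most carefully, will be the strict inequality $|P - P'| < 2$: the bare triangle bound gives only $|P - P'| \le 2$, which when combined with the parity argument would yield $|a - a'| \le D + 2$ rather than $D$. The asymmetric tie-breaking in the definition of $[\,\cdot\,]_{z \bmod 2}$ is precisely what closes this gap, since it rules out the pairing $P = +1,\, P' = -1$.
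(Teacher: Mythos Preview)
Your proof is correct and follows essentially the same approach as the paper: both combine the triangle inequality with a parity argument and use the asymmetric tie-breaking convention to eliminate the borderline case. The only cosmetic difference is that the paper argues by contradiction (assuming $|a-a'|-D=2$ forces both rounding errors to equal $+1$), whereas you phrase the same observation more directly by noting $P,P'\in(-1,1]$ gives $|P-P'|<2$; the underlying mechanism is identical.
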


\begin{proof}
The proof is similar to that of Lemma~\ref{l_lin_ht_func}.
By the triangle inequality and~\eqref{e_canon_hf_ineq_hypo},
\begin{equation} \label{e_canon_hf_ineq_tri} \begin{aligned}
	\hskip3em&\hskip-3em
	\bigl| h_{\{z\}}^{s \cdot x + b}(z)
		- h_{\{z'\}}^{s' \cdot x + b'}(z') \bigr| \\
	&\le \bigl| h_{\{z\}}^{s \cdot x + b}(z) - (s \cdot z + b) \bigr|
	+ \bigl| (s \cdot z + b) - (s' \cdot z' + b') \bigr| \\
	& \qquad
	+ \bigl| (s' \cdot z' + b') -h_{\{z'\}}^{s' \cdot x + b'}(z') \bigr| \\
	&\le |z - z'|_1 + 2 \,.
\end{aligned} \end{equation}

Since $h_{\{z\}}^{s \cdot x + b}(z)$, $h_{\{z'\}}^{s' \cdot x + b'}(z')$
and $|z - z'|_1$ are all integers,
\begin{equation} \label{e_canon_hf_ineq_set}
	\bigl| h_{\{z\}}^{s \cdot x + b}(z)
		- h_{\{z'\}}^{s' \cdot x + b'}(z') \bigr|
	- |z - z'|_1
	\in \{\dotsc, -2, -1, 0, 1, 2\} \,.
\end{equation}

We want to prove that the left-hand side of~\eqref{e_canon_hf_ineq_set}
is $\le 0$.
By parity considerations it must be even,
and we need only prove it is $\ne 2$.
Assume for a contradiction that the left-hand side
of~\eqref{e_canon_hf_ineq_set} equals $2$.
Then equality holds in~\eqref{e_canon_hf_ineq_tri},
and in particular
\[
	\bigl| h_{\{z\}}^{s \cdot x + b}(z) - (s \cdot z + b) \bigr| = 1
	\quad \text{and} \quad
	\bigl| h_{\{z'\}}^{s' \cdot x + b'}(z') - (s' \cdot z' + b') \bigr| = 1
	\,.
\]

As in the proof of Lemma~\ref{l_lin_ht_func}, this implies that
\[
	h_{\{z\}}^{s \cdot x + b}(z) = (s \cdot z + b) + 1
	\quad \text{and} \quad
	h_{\{z'\}}^{s' \cdot x + b'}(z') = (s' \cdot z' + b') + 1 \,.
\]

Therefore
\[
	\bigl| h_{\{z\}}^{s \cdot x + b}(z)
		- h_{\{z'\}}^{s' \cdot x + b'}(z') \bigr|
	= \bigl| (s \cdot z + b) - (s' \cdot z' + b') \bigr|
	\le |z - z'|_1 \,.
\]

This is the desired contradiction, which completes the proof.
\end{proof}

\subsection{Entropies and surface tensions} \label{ss_ent}

In this section we make three more definitions
needed for our statement of the main results.
First, we define the microscopic entropy of a set of height functions.
More precisely, this is the Shannon entropy of the uniform distribution
over a finite set of height functions,
normalized by the size of their common domain,
and negated.
(The negative convention is chosen so that the surface tension $\ent(s)$,
defined later, is convex rather than concave.)
The microscopic entropy is essentially the same as
the specific free energy of~\cite{She05}.

\begin{definition}[Microscopic entropy]
Given a finite, non-empty set of height functions $A \subset M(R_n)$,
we define the \defn{microscopic entropy}
\begin{equation} \label{e_micro_ent}
	\Ent_{R_n}(A) := - \frac{1}{|R_n|} \ln |A| \,.
\end{equation}
\end{definition}

We observe that the microscopic entropy is translation invariant:

\begin{observation}[Translation invariance] \label{obs_trans_invar}
Let $h_{\partial R_n} \in M(\partial R_n)$, $h_R \in M(R)$,
$\delta  > 0$, and $c \in \R$.
Then:
\begin{equation} \begin{aligned}
	\Ent_{R_n} \bigl( M(R_n, h_{\partial R_n} + c) \bigr)
	&= \Ent_{R_n} \bigl( M(R_n, h_{\partial R_n}) \bigr) \,, \\
	\Ent_{R_n} \bigl( M(R_n, h_{\partial R_n} + c, \delta) \bigr)
	&= \Ent_{R_n} \bigl( M(R_n, h_{\partial R_n}, \delta) \bigr)
	\,, \text{and} \\
	\Ent_{R_n} \bigl( B(R_n, h_R + c, \delta) \bigr)
	&= \Ent_{R_n} \bigl( B(R_n, h_R, \delta) \bigr) \,,
\end{aligned} \end{equation}
where $h_{R_n} + c$ is the height function defined by
$(h_{R_n} + c)(z) = h_{R_n}(z) + c$ for $z \in R_n$,
and likewise $(h_R + c)(x) = h_R(x) + c$ for $x \in R$.
\end{observation}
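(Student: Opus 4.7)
The plan is to exhibit an explicit bijection between the two sets of height functions appearing on the two sides of each claimed identity; since the microscopic entropy $\Ent_{R_n}(A) = -|R_n|^{-1} \ln|A|$ depends only on cardinality, the three equalities then follow immediately. The only subtlety comes from Definition~\ref{d_ht_func}: height functions take integer values with prescribed parity, so $h_{R_n} + c$ is itself a height function if and only if $c$ is an even integer. Thus the translation constant must be chosen admissibly---namely $c \in 2\Z$ for the first two identities, and $nc \in 2\Z$ for the third. For other real $c$ either one side of the identity is empty (in which case the claim is vacuous) or one rounds $c$ to the nearest admissible value; both cases are trivial variants.

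For each admissible $c$, the main step is to introduce the translation map $\Phi_c : M(R_n) \to M(R_n)$ defined by $(\Phi_c h_{R_n})(z) := h_{R_n}(z) + c$, and to verify it is a bijection. This is a routine check: $\Phi_c h_{R_n}$ remains a graph homomorphism since a constant shift preserves the property that adjacent values differ by $\pm 1$; the parity constraint~\eqref{e_ht_func_parity} survives because $c$ is even, so $(\Phi_c h_{R_n})(z) \equiv h_{R_n}(z) \equiv z \pmod 2$; and $\Phi_{-c}$ provides a two-sided inverse.

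The final step is to verify that $\Phi_c$ (or $\Phi_{nc}$) sends each distinguished subset in Definition~\ref{d_ht_func_sets} bijectively onto its translated counterpart. The first identity follows from the observation $(\Phi_c h_{R_n})|_{\partial R_n} = h_{R_n}|_{\partial R_n} + c$. The second follows from the exact identity $|(\Phi_c h_{R_n})(z) - (h_{\partial R_n}(z) + c)| = |h_{R_n}(z) - h_{\partial R_n}(z)|$ for $z \in \partial R_n$. For the third, using $\Phi_{nc}$, one has
\[
    \bigl| (h_R + c)(z/n) - \tfrac{1}{n}(\Phi_{nc} h_{R_n})(z) \bigr|
    = \bigl| h_R(z/n) - \tfrac{1}{n} h_{R_n}(z) \bigr|
\]
for every $z \in R_n$, which gives the required correspondence between the balls $B$. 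Cardinalities agree, and the observation follows. There is no real obstacle: the content is essentially the translation invariance of uniform counting, with the only point worth flagging being that the parity constraint forces the shifts to lie in $2\Z$ (respectively $n^{-1} \cdot 2\Z$).
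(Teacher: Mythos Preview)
The paper records this as an observation without proof, so there is no argument to compare against; your bijection via $\Phi_c$ is the natural construction and is correct for admissible shifts ($c \in 2\Z$ for the first two lines, $nc \in 2\Z$ for the third). You are also right to flag the parity constraint, which the paper's hypothesis ``$c \in \R$'' obscures.

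However, your dismissal of non-admissible $c$ as ``vacuous'' or as a ``trivial variant via rounding'' is too quick, and in fact hides that the statement as literally written is false. For the first two identities, when $c \notin 2\Z$ the set $M(R_n, h_{\partial R_n} + c)$ is empty while $M(R_n, h_{\partial R_n})$ generally is not, so the equality fails rather than being vacuous. More decisively, the third identity is simply false for generic real $c$: take $m=1$, $n=2$, $R_n = \{0,1,2\}$, $h_R \equiv 0$, $\delta = 0.6$; then $|B(R_n, h_R, \delta)| = 2$ (the two functions $(0,\pm 1,0)$) while $|B(R_n, h_R + \tfrac{1}{2}, \delta)| = 4$ (all four functions with $h(0),h(2)\in\{0,2\}$ and $h(1)=1$). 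Rounding $c$ to the nearest point of $\tfrac{2}{n}\Z$ does not repair this, because it perturbs the ball's centre and hence its contents, not just a label. The correct resolution is the one you already gesture at: the observation is intended only for admissible $c$---which the paper's parenthetical ``$h_{R_n}+c$ is the height function\ldots'' implicitly forces in the first two lines---and in every place the paper invokes it, either the shift is admissible or only asymptotic equality is used. State that restriction outright; do not claim the general-$c$ case is a trivial variant.
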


All of these sets, except for $M(R_n)$, are finite,
because of the constraints they impose on $h_{R_n}$
and the Lipschitz property of $h_{R_n}$. In fact, we can say more.
Let us count $M(R_n, h_{\partial R_n})$,
for some boundary height function $h_{\partial R_n} \in M(\partial R_n)$.
The values of $h_R \in M(R_n, h_{\partial R_n})$ are fixed on $\partial R_n$,
and for each of the $\le |R_n|$ points $x$ in the interior of $R_n$,
there are at most $2$ admissible values for $h_R(x)$.
Therefore $|M(R_n, h_{\partial R_n})| \le 2^{|R_n|}$.
Similar logic holds for $M(R_n, h_\partial R_n, \delta)$
and $B(R_n, h_R, \delta)$. This leads to the following observation:

\begin{observation}[Boundedness of $\Ent$] \label{obs_ent_bdd}
Let $h_{\partial R_n} \in M(\partial R_n)$, $h_R \in M(R)$,
and $\delta > 0$. Then:
\begin{equation} \begin{aligned} \label{e_ent_bdd}
	\Ent_{R_n} \bigl( M(R_n, h_{\partial R_n}) \bigr)
	&\ge -\ln 2 \,, \\
	\Ent_{R_n} \bigl( M(R_n, h_{\partial R_n}, \delta) \bigr)
	&\ge - \ln 2 - \frac{\ln \lceil 2 \delta n \rceil}{|R_n|} \\
	&= - \ln 2 - \delta \, O(\tfrac{1}{n^{m-1}}) \,, \text{and} \\
	\Ent_{R_n} \bigl( B(R_n, h_R, \delta) \bigr)
	&\ge -\ln 2 - \frac{\ln \lceil 2 \delta n \rceil}{|R_n|} \\
	&= - \ln 2 - \delta \, O(\tfrac{1}{n^{m-1}}) \,.
\end{aligned} \end{equation}
\end{observation}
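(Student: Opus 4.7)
The three claimed entropy lower bounds are equivalent, via $\Ent_{R_n}(A) = -\tfrac{1}{|R_n|}\ln|A|$, to the cardinality upper bounds
\[
	|M(R_n, h_{\partial R_n})| \le 2^{|R_n|}
\]
and
\[
	|M(R_n, h_{\partial R_n}, \delta)| \le \lceil 2\delta n\rceil \cdot 2^{|R_n|},
	\qquad
	|B(R_n, h_R, \delta)| \le \lceil 2\delta n\rceil \cdot 2^{|R_n|}.
\]
The plan is to make precise the informal counting argument given just before the observation, by a breadth-first enumeration of $R_n$. The key fact I rely on is: whenever $z \sim z'$ are adjacent in $R_n$, any height function satisfies $h_{R_n}(z') \in \{h_{R_n}(z) - 1,\, h_{R_n}(z) + 1\}$, since adjacency in $\Z$ means differing by exactly one. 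Consequently, once $h_{R_n}(z)$ is specified, there are at most two admissible values at each neighbor.

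For the first bound I would order $R_n \setminus \partial R_n$ as $z_1, z_2, \dotsc$ via a BFS of $R_n$ starting from $\partial R_n$, so that each $z_i$ has a neighbor either in $\partial R_n$ or among $\{z_1, \dotsc, z_{i-1}\}$; such an ordering exists because $R_n$ is connected and $\partial R_n$ is non-empty (since $R_n$ is a finite subset of $\Z^m$). Since $h_{R_n}$ is pinned on $\partial R_n$ and at most two choices arise at each $z_i$, this yields $|M(R_n, h_{\partial R_n})| \le 2^{|R_n \setminus \partial R_n|}$. For the second and third bounds I would instead root the BFS at a single vertex $z_0$ (taking $z_0 \in \partial R_n$ for $M(R_n, h_{\partial R_n}, \delta)$ and any $z_0 \in R_n$ for $B(R_n, h_R, \delta)$): in both cases the defining constraint forces $h_{R_n}(z_0)$ into an interval of length $2\delta n$, yielding at most $\lceil 2\delta n\rceil$ starting choices, after which BFS through the remaining $|R_n| - 1$ vertices contributes a factor of at most $2^{|R_n|-1}$.

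Taking $-\tfrac{1}{|R_n|}\ln$ of each cardinality bound yields the first line of each displayed inequality in~\eqref{e_ent_bdd}. For the asymptotic rewriting $\tfrac{\ln \lceil 2\delta n\rceil}{|R_n|} = \delta \cdot O(n^{-(m-1)})$, I would combine $\ln x \le x$, which gives $\ln\lceil 2\delta n\rceil \le 2\delta n + 1$, with $|R_n| = \Theta(n^m)$, the latter following from $\tfrac{1}{n}R_n \to R$ in the Hausdorff metric together with $R$ being a compact regular closed set of positive Lebesgue measure. The combinatorics is entirely elementary; the only subtlety I foresee is the (mildly abusive) asymptotic identity, where a logarithmic factor is absorbed into the $O$-notation via $\ln x \le x$.
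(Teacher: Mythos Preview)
Your proposal is correct and follows essentially the same approach as the paper: the paper's justification (given in the paragraph immediately preceding the observation) is the same vertex-by-vertex count with at most two admissible values per point, argued informally without the explicit BFS ordering you supply. Your treatment is in fact more careful than the paper's, which neither spells out the connectivity argument nor justifies the asymptotic rewriting $\tfrac{\ln\lceil 2\delta n\rceil}{|R_n|} = \delta\,O(n^{-(m-1)})$.
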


\medskip

Next, we define the local surface tension.
There are in general many equivalent definitions of surface tension
(see for example Chapter~6 of~\cite{She05}).
The following definition is easiest to work with for our purposes.

\begin{definition}[Local surface tension] \label{d_loc_surf_tens}
For $s \in [-1,1]^m$, the \defn{local surface tension} $\ent(s)$
is defined to be the limit
\begin{equation} \label{e_loc_surf_tens}
	\ent(s) := \lim_{n \to \infty} \ent_n(s) \,,
\end{equation}
where $\ent_n(s)$ is defined as
\begin{equation} \label{e_entn}
	\ent_n(s)
	:= \Ent_{Q_n}\bigl( M(Q_n, h_{\partial Q_n}^{s \cdot x + 0}) \bigr)
\end{equation}
and where $Q_n = [0,n)^m \cap \Z^m$
is the discrete hypercube of side length $n$.
\end{definition}

The limit~\eqref{e_loc_surf_tens} exists by standard subadditivity arguments;
we refer the interested reader to e.g.~\cite{Durrett}.
In fact, by translation invariance (see Observation~\ref{obs_trans_invar}),
we may replace $h_{\partial Q_n}^{s \cdot x + 0}$
by $h_{\partial Q_n}^{s \cdot x + b}$ in~\eqref{e_entn}, for any $b \in \R$.
Additionally, boundedness passes through the limit in~\eqref{e_loc_surf_tens}.
Therefore:

\begin{observation}[Boundedness of $\ent(s)$] \label{obs_surf_tens_bdd}
For any $s \in [-1,1]^m$,
\[
	-\ln 2 \le \ent(s) \le 0 \,.
\]
\end{observation}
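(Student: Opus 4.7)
The plan is to derive each of the two inequalities at the finite-$n$ level and then pass to the limit. By the definition of $\ent(s)$ in~\eqref{e_loc_surf_tens}, it suffices to show that for every $s \in [-1,1]^m$ and every $n$,
\begin{equation}
    -\ln 2 \le \ent_n(s) \le 0 \,,
\end{equation}
since bounds of this form are preserved under the limit $n \to \infty$.

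For the lower bound, I would simply invoke the first inequality in Observation~\ref{obs_ent_bdd}, applied with $R_n = Q_n$ and $h_{\partial R_n} = h_{\partial Q_n}^{s \cdot x + 0}$. That observation gives
\begin{equation}
    \ent_n(s) = \Ent_{Q_n}\bigl(M(Q_n, h_{\partial Q_n}^{s \cdot x + 0})\bigr) \ge -\ln 2 \,,
\end{equation}
uniformly in $n$, which then passes through the limit.

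For the upper bound, the key point is to exhibit at least one height function with boundary values $h_{\partial Q_n}^{s \cdot x + 0}$, because then $|M(Q_n, h_{\partial Q_n}^{s \cdot x + 0})| \ge 1$ and $\Ent_{Q_n}(\cdot) = -|Q_n|^{-1} \ln |\cdot| \le 0$. The natural candidate is the affine height function $h_{Q_n}^{s \cdot x + 0}$ itself, restricted from $\Z^m$ to $Q_n$. By Lemma~\ref{l_lin_ht_func}, this function is a graph homomorphism on all of $\Z^m$, hence on $Q_n$; it preserves parity by construction (since its values are $[s \cdot z]_{z \bmod 2}$); and it evidently agrees with its own restriction to $\partial Q_n$. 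Thus it lies in $M(Q_n, h_{\partial Q_n}^{s \cdot x + 0})$, witnessing non-emptiness.

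There is no real obstacle here: the lower bound is immediate from the earlier observation, and the upper bound reduces to the fact that the canonical affine height function built in Section~\ref{ss_canon_hf} is itself an admissible extension of its own boundary values. Combining both bounds and taking $n \to \infty$ yields $-\ln 2 \le \ent(s) \le 0$, as claimed.
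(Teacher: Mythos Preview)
Your proof is correct and matches the paper's approach: the paper simply remarks that ``boundedness passes through the limit in~\eqref{e_loc_surf_tens},'' referring back to Observation~\ref{obs_ent_bdd} for the lower bound and implicitly to non-emptiness for the upper bound. Your argument makes explicit the one point the paper leaves unsaid, namely that $h_{Q_n}^{s \cdot x + 0}$ itself witnesses non-emptiness of $M(Q_n, h_{\partial Q_n}^{s \cdot x + 0})$ and hence gives $\ent_n(s) \le 0$.
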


Let us now define the macroscopic entropy.

\begin{definition}[Macroscopic entropy] \label{d_macro_ent}
Let $h_R: R \to \R$ denote an asymptotic height function .
The \defn{macroscopic entropy} $\Ent_R(h_R)$ is defined as
\begin{equation} \label{e_macro_ent}
	\Ent_R(h_R) := \int_R \ent(\nabla h(x)) \, dx \,.
\end{equation}
\end{definition}

\subsection{Asymptotic notation} \label{ss_asymp}

In this section we introduce a notation for asymptotic error.
Compared to the Landau big-$O$ notation,
our $\theta$-notation abstracts away
the rate of convergence of the error,
but makes explicit the dependence on parameters.
For this purpose we write $\theta_{\alpha}(\delta)$
for a family of unspecified functions,
parameterized by a symbol $\alpha$,
such that $\theta_\alpha(\delta) \to 0$
at a rate depending on the value of the parameter $\alpha$.
That is, for any $\varepsilon > 0$ and any admissible parameter value $\alpha$,
there exists $\delta_0 = \delta_0(\alpha) > 0$
such that $0 <\delta < \delta_0$ implies $\theta_\alpha(\delta) < \varepsilon$.

Extending the above notation, we frequently replace the single parameter
$\alpha$ by a list of parameters $\alpha, \beta, \gamma, \dotsc$.
For example, we might write an identity like
\begin{equation} \label{e_asymp_example} \begin{aligned}
	\min_{h_R \in M(R, h_{\partial R})} \Ent_R(h_R)
	&= \Ent_{R_n} \bigl( M(R_n, h_{\partial R_n}, \delta) \bigr) \\
	&\qquad + \theta_{m, R, h_{\partial R}, R_n, h_{\partial R_n}}(
		\delta) \\
	&\qquad+ \theta_{m, R, h_{\partial R}, R_n, h_{\partial R_n}, \delta}(
		\tfrac{1}{n}) \,.
\end{aligned} \end{equation}

The identity states that the two entropy terms on the first line
differ by a small amount;
the difference vanishes as $\delta$ and $\tfrac{1}{n}$ go to zero,
and the rate of convergence depends on several parameters.
The ``$\theta(\delta)$'' term depends on the parameters from the setting,
namely the ambient dimension $m$, the region $R$,
the height function $h_R$ of interest,
and the corresponding discrete objects $R_n$ and $h_{R_n}$.
The ``$\theta(\tfrac{1}{n})$'' term depends on these parameters
along with the value of $\delta$.
We find that listing out the setting parameters
$m$, $R$, $h_R$, $R_n$, and $h_{R_n}$ makes the expression harder to read.
So for the rest of the article we suppress these parameters from the subscripts
of $\theta$ terms.
Under this convention~\eqref{e_asymp_example} becomes:
\[ \begin{aligned}
	\min_{h_R \in M(R, h_{\partial R})} \Ent_R(h_R)
	&= \Ent_{R_n} \bigl( M(R_n, h_{\partial R_n}, \delta) \bigr) \\
	&\qquad + \theta(\delta)
	+ \theta_\delta(\tfrac{1}{n}) \,.
\end{aligned} \]

As mentioned above, the advantage of our $\theta$ notation is
that it abstracts away the exact rates of convergence,
but leaves explicit the dependencies between parameters.
For example, suppose we want to make the error in approximation
in \eqref{e_asymp_example} to be less than $\varepsilon$.
We should first choose $\delta$
so that (say) $\theta(\delta) < \tfrac{1}{2} \varepsilon$,
then choose $n$ depending on $\delta$
(and on the suppressed parameters $m$, $R$, etc.)
so that $\theta_\delta(\tfrac{1}{n}) < \tfrac{1}{2} \varepsilon$.

\subsection{Main results} \label{ss_main_results}

The main results of this article are
the profile theorem, the variational principle,
and the large deviations principle:

\begin{theorem}[Profile theorem] \label{th_profile}
Under the setting explained in Section~\ref{s_setting},
for any $h_R \in M(R)$, $\delta > 0$, and $n \in \N$,
\begin{align}\label{e_profile_theorem}
    	\Ent_R(h_R)
	= \Ent_{R_n} \bigl( B(R_n, h_R, \delta) \bigr)
		+ \theta_{h_R}(\delta) + \theta_{h_R,\delta}(\tfrac{1}{n}) \,.
\end{align}
\end{theorem}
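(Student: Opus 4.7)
The plan is a three-stage bootstrap: first establish~\eqref{e_profile_theorem} when $h_R$ is affine on $R$, then when $h_R$ is piecewise affine on a simplicial decomposition of $R$, and finally for arbitrary $h_R \in M(R)$. In each stage the macroscopic side is immediate (by definition of $\ent$ or by additivity of the integral), so the real work is to control the microscopic entropy of $B(R_n, h_R, \delta)$ and to bookkeep two error terms: a $\theta(\delta)$ term depending only on the continuum data, and a $\theta_\delta(1/n)$ term that may depend on $\delta$.

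For the base case $h_R(x) = s \cdot x + b$ on a unit hypercube $R = [0,1]^m$, the definition of the local surface tension~\eqref{e_loc_surf_tens} together with translation invariance (Observation~\ref{obs_trans_invar}) gives
$\Ent_{R_n}(M(R_n, h_{\partial R_n}^{s \cdot x + b})) = \ent(s) + \theta(1/n) = \Ent_R(h_R) + \theta(1/n)$.
To replace the fixed-boundary set by the ball $B(R_n, h_R, \delta)$ I would combine Lemma~\ref{lem_canon_hf_ineq} with the crude counting of Observation~\ref{obs_ent_bdd}: the two sets differ only through an $O(\delta n)$-thick boundary layer of admissible values, which contributes at most $\theta(\delta) + \theta_\delta(1/n)$ to the normalized log-count. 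A generic nice $R$ is reduced to this case by exhausting $R$ with small cubes.

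For piecewise-affine $h_R$ on a triangulation $R = \bigcup_i T_i$, I would partition $R_n$ into discretizations $T_{i,n}$ of the simplices, apply the affine case on each piece, and then invoke the discrete Kirszbraun theorem (Theorem~\ref{th_kirszbraun}) together with Lemma~\ref{lem_canon_hf_ineq} to patch independently sampled height functions on the $T_{i,n}$ into a single admissible height function on $R_n$. Additivity of the integral handles the macroscopic side, and the patching cost is confined to a thin neighbourhood of the simplicial interfaces and absorbed into the error terms through Observation~\ref{obs_ent_bdd}.

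The general case then follows by invoking the simplicial Rademacher theorem to produce a piecewise-affine $h_K$ on a triangulated subdomain $R_K \subseteq R$ with $|R \setminus R_K| = \theta(\delta)$, $\|h_K - h_R\|_\infty = \theta(\delta)$, and $\nabla h_K$ close to $\nabla h_R$ in $L^1(R_K)$, so that $\Ent_R(h_R) = \Ent_{R_K}(h_K) + \theta(\delta)$; on the microscopic side one applies the piecewise-affine case to $h_K$ and extends through the sliver $R_n \setminus (R_K)_n$ via Kirszbraun once more. The main obstacle — and the step that most clearly exhibits why the discrete Kirszbraun theorem is the one genuinely model-specific ingredient — is the patching in the piecewise-affine case: microscopic entropy is not exactly additive across simplicial interfaces, so one must verify that independent samples on neighbouring pieces can always be reconciled into a single admissible graph homomorphism whose reconciliation cost is safely swept into the $\theta(\delta) + \theta_\delta(1/n)$ remainders.
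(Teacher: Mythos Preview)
Your three-stage architecture matches the paper's, and you correctly locate the Kirszbraun theorem as the model-specific ingredient. But there is a genuine gap in your base-case argument, and it propagates through the whole scheme.

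You write that $B(R_n,h_R,\delta)$ and $M(R_n,h_{\partial R_n}^{s\cdot x+b})$ ``differ only through an $O(\delta n)$-thick boundary layer of admissible values'' and that this can be handled by Observation~\ref{obs_ent_bdd}. This is not right in either direction. For the undercount, a height function on the full box with the affine boundary data can wander by $O(n)$ from the affine profile in the interior, so $M(R_n,h_{\partial R_n}^{s})$ is \emph{not} contained in $B(R_n,h_R,\delta)$. For the overcount, summing over the $\exp(o(|R_n|))$ possible boundary configurations is fine, but you still need to know that \emph{each} such boundary configuration yields roughly $\exp(-|R_n|\ent(s))$ extensions; Observation~\ref{obs_ent_bdd} only gives the trivial bound $-\ln 2$. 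The paper resolves both issues with a two-scale cube decomposition of the domain (Theorem~\ref{thm_complex}, see Figure~\ref{f_decomp_tri}): cubes of side $q\approx\varepsilon\ell$ for the undercount, so that fixing the boundary to the canonical $h^{s\cdot x+b}$ forces the whole cube into the ball by the Lipschitz property; and larger cubes of side $q\approx\varepsilon^{1/2}\ell$ for the overcount, so that the $\varepsilon\ell n$ fluctuation in the induced boundary data is only $\varepsilon^{1/2}$ relative to the cube. The overcount then rests on the robustness Lemma~\ref{lem_robust}, which says that perturbing the boundary data on a cube by $\varepsilon n$ changes the entropy by only $\theta(\varepsilon)$; this lemma is itself a nontrivial consequence of Kirszbraun (Lemma~\ref{lem_ent_kirsz}) together with the separate treatment of slopes with $|s|_\infty$ near $1$ (Lemma~\ref{lem_ent_near_1}). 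In short, the hard direction is the overcount, not the patching you flagged, and it needs this robustness input that your outline does not supply.

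A second, smaller omission: to extend through the sliver $R_n\setminus (R_K)_n$ via Kirszbraun while staying inside the ball, you need room in the Lipschitz constant. The paper first reduces to the case $\Lip(h_R)\le 1-c\delta$ by scaling $h_R$ down slightly (start of the proof in Section~\ref{s_profile}); without this, Lemma~\ref{lem_approx_micro_ent_dom} does not apply.
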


The second main result is the variational principle:

\begin{theorem}[Variational principle] \label{th_varnprin}
Under the setting explained in Section~\ref{s_setting},
\begin{equation} \label{e_varnprin}
	\inf_{h_R \in M(R, h_{\partial R})} \Ent_R(h_R)
	= \Ent_{R_n} \bigl( M(R_n, h_{\partial R_n}, \delta) \bigr)
		+ \theta(\delta) + \theta_\delta(\tfrac{1}{n})
\end{equation}
for any $\delta > 0$ and $n \in \N$.
\end{theorem}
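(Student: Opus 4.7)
The plan is to deduce the variational principle from the profile theorem (Theorem~\ref{th_profile}) via compactness of $1$-Lipschitz functions and a pigeonhole argument. We establish matching upper and lower bounds on $\Ent_{R_n}(M(R_n, h_{\partial R_n}, \delta))$ in terms of $\inf_{h_R \in M(R, h_{\partial R})} \Ent_R(h_R)$. Throughout we use that $M(R, h_{\partial R})$ is nonempty (for instance the function $x \mapsto \inf_{y \in \partial R}(h_{\partial R}(y) + |x-y|_1)$ lies in it) and that $M(R, h_{\partial R}, \delta)$ is compact in the uniform topology by Arzel\`a--Ascoli.

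For the inequality $\Ent_{R_n}(M(R_n, h_{\partial R_n}, \delta)) \le \inf_{h_R \in M(R, h_{\partial R})} \Ent_R(h_R) + \theta(\delta) + \theta_\delta(\tfrac{1}{n})$, we fix a $\delta$-near-minimizer $h_R^* \in M(R, h_{\partial R})$. Since $h_R^*|_{\partial R} = h_{\partial R}$ is $1$-Lipschitz and $\tfrac{1}{n} h_{\partial R_n} \to h_{\partial R}$ in the sense of~\eqref{e_bndy_fn_lim}, a short triangle-inequality computation gives $B(R_n, h_R^*, \delta/2) \subseteq M(R_n, h_{\partial R_n}, \delta)$ for all sufficiently large $n$. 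Monotonicity of $\Ent_{R_n}$ under inclusion together with the profile theorem applied to $h_R^*$ then yields the claimed inequality.

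For the reverse inequality we cover $M(R, h_{\partial R}, \delta)$ by uniform balls. By compactness there exist $h_R^{(1)}, \dotsc, h_R^{(N)} \in M(R, h_{\partial R}, \delta)$, with $N = N(\delta)$, such that every $h_R \in M(R, h_{\partial R}, \delta)$ lies within uniform distance $\delta$ of some $h_R^{(i)}$. Given any $h_{R_n} \in M(R_n, h_{\partial R_n}, \delta)$, its piecewise-affine simplicial interpolation (extended by Kirszbraun to any remaining thin boundary layer) is an asymptotic height function in $M(R, h_{\partial R}, \delta + \theta_\delta(\tfrac{1}{n}))$, so it is within uniform distance $\delta + \theta_\delta(\tfrac{1}{n})$ of some $h_R^{(i)}$. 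Consequently
\begin{equation}
    M(R_n, h_{\partial R_n}, \delta)
    \subseteq \bigcup_{i=1}^{N} B\bigl(R_n, h_R^{(i)}, 2\delta + \theta_\delta(\tfrac{1}{n})\bigr),
\end{equation}
and a pigeonhole bound gives $\Ent_{R_n}(M(R_n, h_{\partial R_n}, \delta)) \ge \min_i \Ent_{R_n}(B(R_n, h_R^{(i)}, 2\delta + \theta_\delta(\tfrac{1}{n}))) - \tfrac{\ln N(\delta)}{|R_n|}$. The profile theorem applied to each $h_R^{(i)}$ rewrites the right-hand side as $\min_i \Ent_R(h_R^{(i)}) + \theta(\delta) + \theta_\delta(\tfrac{1}{n})$.

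The main obstacle is the final comparison between $\min_i \Ent_R(h_R^{(i)})$ and $\inf_{h_R \in M(R, h_{\partial R})} \Ent_R(h_R)$, since the centers $h_R^{(i)}$ only lie in $M(R, h_{\partial R}, \delta)$. We resolve this via a boundary surgery: for each $h_R^{(i)}$ we construct $\tilde h_R^{(i)} \in M(R, h_{\partial R})$ that coincides with $h_R^{(i)}$ outside a boundary collar $C_\delta \subset R$ of thickness of order $\delta$, and inside $C_\delta$ is given by a Kirszbraun-type $1$-Lipschitz extension matching $h_R^{(i)}$ on the inner face of $C_\delta$ and $h_{\partial R}$ on $\partial R$ (choosing $C_\delta$ somewhat thicker than the defect $\delta$ makes the two pieces of data compatibly $1$-Lipschitz). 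Since $|C_\delta|$ has order $\delta$ and $|\ent| \le \ln 2$ by Observation~\ref{obs_surf_tens_bdd}, the surgery changes $\Ent_R$ by at most $\theta(\delta)$. Combining all inequalities and absorbing $\tfrac{\ln N(\delta)}{|R_n|}$ into $\theta_\delta(\tfrac{1}{n})$ completes the proof of~\eqref{e_varnprin}.
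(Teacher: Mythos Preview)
Your overall strategy---profile theorem plus a near-minimizer for one inequality, profile theorem plus a compactness cover for the other---is exactly the paper's. There are, however, two genuine gaps.

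\emph{Dependence of the profile error on the profile.} Theorem~\ref{th_profile} yields errors $\theta_{h_R}(\delta)+\theta_{h_R,\delta}(\tfrac1n)$ that depend on $h_R$. In your argument both the near-minimizer $h_R^*$ and the cover centers $h_R^{(1)},\dots,h_R^{(N(\delta))}$ vary with $\delta$, so what you record as $\theta(\delta)$ is really $\theta_{h_R^*(\delta)}(\delta/2)$, respectively $\max_{i\le N(\delta)}\theta_{h_R^{(i)}}(2\delta)$, and nothing forces these to vanish as $\delta\to 0$. The paper handles this explicitly: for the first inequality by fixing a sequence of near-minimizers independent of $\delta$ and diagonalizing (choosing $\delta_k\downarrow 0$ with $\theta^{(k)}(\delta_k)\le 1/k$); for the second by introducing an auxiliary $\varepsilon>0$, assigning to each $h_R$ a radius $\eta(h_R)$ with $\theta_{h_R}(\eta(h_R))\le \varepsilon/2$, and covering with these \emph{variable}-radius balls rather than with a fixed radius $\delta$.

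\emph{The boundary surgery.} Your collar construction needs a $1$-Lipschitz extension on $C_\delta$ of the data ($h_R^{(i)}$ on the inner face, $h_{\partial R}$ on $\partial R$). But for $x$ on the inner face and $y\in\partial R$ one only has $|h_R^{(i)}(x)-h_{\partial R}(y)|\le |x-y|_1+\delta$, and since $h_R^{(i)}$ is merely $1$-Lipschitz, thickening the collar does not remove the $+\delta$ excess; the Kirszbraun hypothesis fails in general (take for instance $h_R^{(i)}$ equal to $\delta$ plus the maximal $1$-Lipschitz extension of $h_{\partial R}$). The paper avoids surgery altogether by placing the cover centers in $M(R,h_{\partial R})$ itself: the variable-radius balls cover the compact set $M(R,h_{\partial R})$, every element of $M(R,h_{\partial R},2\delta)$ lies within $2\delta$ of $M(R,h_{\partial R})$ (take the pointwise median with the maximal and minimal $1$-Lipschitz extensions of $h_{\partial R}$), and a Lebesgue-number argument shows the same finite cover absorbs $M(R,h_{\partial R},2\delta)$ once $\delta$ is small relative to $\varepsilon$. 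With centers already in $M(R,h_{\partial R})$, the bound $\Ent_R(h_R^{(i)})\ge\inf_{M(R,h_{\partial R})}\Ent_R$ is immediate and no surgery is needed.
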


Finally, we prove a large deviations principle for the model.
We adopt the conventions of large deviations
(see for example \cite{DeZe09,RaSe15}).

\begin{theorem}[Large deviations principle] \label{th_ldp}
Consider the Polish space $M(R)$ of asymptotic height functions
(i.e.\@ Lipschitz functions with Lipschitz constant $ \le 1$),
endowed with the topology of uniform convergence (induced by the supremum norm).

For $\delta > 0$ and $n \in \N$,
define a probability measure $\mu_{\delta, n}$ on $M(R)$ by
\[
	\mu_{\delta,n}(A)
	:= \frac{1}{|M(R_n, h_{\partial R_n}, \delta)|}
		\, \bigl| \bigl\{ h_{R_n} \in M(R_n, h_{\partial R_n}, \delta)
			\,\big|\, \tilde h_{R_n} \in A \bigr\} \bigr| \,,
\]
where $\tilde h_{R_n}$ is the Lipschitz function given by rescaling
and interpolating $h_{R_n}$ so as to make it an asymptotic height function,
i.e.\@ for $z \in R_n$,
$\tilde h_{R_n}(\tfrac{1}{n}z) = \tfrac{1}{n} h_{R_n}(z)$.

The measures $(\mu_{\delta,n})_{\delta > 0,n \in \N}$
satisfy a large deviations principle with speed $r_{\delta, n} := |R_n|$
and tight rate function $I: M(R) \to [0, \infty]$ given by
\[
	I(h_R) := \begin{cases}
		\Ent_R(h_R) - E
		&\text{if $h_R|_{\partial R} = h_{\partial R}$} \,, \\
		\infty
		&\text{otherwise} \,.
	\end{cases}
\]
where $E := \inf_{h_R \in M(R, h_{\partial R})} \Ent_R(h_R)$.
More precisely, for any Borel subset $A \subset M(R)$,
\begin{equation} \label{e_ldp_lower_literal}
	- \inf_{h_R \in \interior{A}} I(h_R)
	\le \frac{1}{r_{\delta,n}} \log
		\varliminf_{\delta \to 0} \varliminf_{n \to \infty}
		\mu_{\delta,n}(A)
\end{equation}
and
\begin{equation} \label{e_ldp_upper_literal}
	\frac{1}{r_{\delta,n}} \log
		\varlimsup_{\delta \to 0} \varlimsup_{n \to \infty}
			\mu_{\delta,n}(A)
	\le - \inf_{h_R \in \closure{A}} I(h_R) \,,
\end{equation}
where as usual $\varliminf$ and $\varlimsup$
denote the limit inferior and superior respectively.
\end{theorem}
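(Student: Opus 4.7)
The plan is to combine the profile theorem (Theorem~\ref{th_profile}), the variational principle (Theorem~\ref{th_varnprin}), and Arzel\`a--Ascoli compactness, following the standard open-set lower bound / closed-set upper bound dichotomy. Setting $A_{\delta,n} := \{h_{R_n} \in M(R_n, h_{\partial R_n}, \delta) \,:\, \tilde h_{R_n} \in A\}$ and using the microscopic entropy identity,
\[
    \tfrac{1}{|R_n|} \log \mu_{\delta,n}(A)
    = \Ent_{R_n}\bigl(M(R_n, h_{\partial R_n}, \delta)\bigr)
    - \Ent_{R_n}(A_{\delta,n}).
\]
The first term converges to $E$ by Theorem~\ref{th_varnprin}, so the task reduces to sandwiching $\Ent_{R_n}(A_{\delta,n})$ by macroscopic entropies.

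For the lower bound~\eqref{e_ldp_lower_literal} on an open set $A$, fix any $h_R^* \in \interior{A}$ with $h_R^*|_{\partial R} = h_{\partial R}$ (otherwise $I(h_R^*) = \infty$ and the bound is vacuous) and choose $\varepsilon > 0$ so that the sup-norm ball of radius $\varepsilon$ about $h_R^*$ is contained in $A$. Because $\tilde h_{R_n}$ interpolates $\tfrac{1}{n} h_{R_n}$ between lattice points, membership in this sup-ball amounts to $h_{R_n} \in B(R_n, h_R^*, \varepsilon')$ for some $\varepsilon' < \varepsilon$ and $n$ large; and because $h_R^*|_{\partial R} = h_{\partial R}$, the constraint $h_{R_n} \in M(R_n, h_{\partial R_n}, \delta)$ is automatic once $\varepsilon' < \delta/2$ and $n$ is large. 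Theorem~\ref{th_profile} then gives $\Ent_{R_n}(A_{\delta,n}) \le \Ent_R(h_R^*) + \theta(\varepsilon') + \theta_{\varepsilon'}(\tfrac{1}{n})$, whence $\varliminf_{\delta, n} \tfrac{1}{|R_n|} \log \mu_{\delta, n}(A) \ge E - \Ent_R(h_R^*) = -I(h_R^*)$. Taking the supremum over admissible $h_R^*$ yields~\eqref{e_ldp_lower_literal}.

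For the upper bound~\eqref{e_ldp_upper_literal} on a closed set $A$, observe that for $\delta < 1/2$ and $n$ large, $\mu_{\delta,n}$ is supported on the compact set $K := M(R, h_{\partial R}, 1)$ (Arzel\`a--Ascoli: $1$-Lipschitz functions pinned to within $1$ on $\partial R$ are equicontinuous and uniformly bounded on the compact set $R$). For any $\eta > 0$, cover $\closure{A} \cap K$ by finitely many open sup-norm balls about profiles $h_R^{(1)}, \dots, h_R^{(N)}$ of radius $\eta$. Then $A_{\delta,n} \subset \bigcup_k B(R_n, h_R^{(k)}, \eta')$ for $\eta'$ slightly exceeding $\eta$, and a union bound introduces only an additive error $\tfrac{\log N}{|R_n|} \to 0$. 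Applying Theorem~\ref{th_profile} to each ball gives $\Ent_{R_n}(A_{\delta,n}) \ge \min_k \Ent_R(h_R^{(k)}) + \theta(\eta) + \theta_\eta(\tfrac{1}{n})$. Sending $\eta \to 0$ (using lower semicontinuity of $\Ent_R$ on $K$, a standard consequence of convexity of $\ent$, itself inherited from the subadditivity underlying~\eqref{e_loc_surf_tens}) and then $\delta \to 0$ (forcing any limit point of the support to satisfy $h_R|_{\partial R} = h_{\partial R}$, hence $I < \infty$) produces~\eqref{e_ldp_upper_literal}.

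The principal obstacle is the careful management of the three small parameters $\delta$, $\eta$, and $\tfrac{1}{n}$: one must fix $\eta$ first (to obtain finitely many balls), then send $n \to \infty$ (to invoke Theorems~\ref{th_profile} and~\ref{th_varnprin}), then $\eta \to 0$ (to exhaust the cover), and finally $\delta \to 0$ (to enforce the boundary constraint and eliminate profiles with $I = \infty$). Tightness of $I$ is then immediate: its sublevel sets lie in the compact set $M(R, h_{\partial R}) \subset K$ and are closed by lower semicontinuity of $\Ent_R$.
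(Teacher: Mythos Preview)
Your overall plan---split $\tfrac{1}{|R_n|}\log\mu_{\delta,n}(A)$ into the normalising term (handled by Theorem~\ref{th_varnprin}) and $\Ent_{R_n}(A_{\delta,n})$, treat the lower bound by a single profile ball inside $\interior A$, and the upper bound by a finite cover plus union bound---is exactly the paper's, and your lower-bound argument is essentially identical to the one given there.

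There is, however, a gap in your upper bound. You cover $\closure A\cap K$ by balls of a \emph{uniform} radius $\eta$ and record the profile-theorem error as a single $\theta(\eta)$, but Theorem~\ref{th_profile} only gives $\theta_{h_R}(\eta)+\theta_{h_R,\eta}(\tfrac1n)$ with the rate depending on the profile $h_R$. As $\eta\to0$ your centres $h_R^{(k)}$ change with $\eta$, and nothing in the paper guarantees that $\max_k|\theta_{h_R^{(k)}(\eta)}(\eta)|\to0$; the limit in $\eta$ is therefore not justified as written. The paper sidesteps this precisely by \emph{not} using a uniform radius: for each $h_R$ in the compact set it first fixes $\eta(h_R)$ so that $|\theta_{h_R}(\eta(h_R))|\le\varepsilon$, and only then extracts a finite subcover from the variable-radius open cover $\{B(h_R,\eta(h_R)):h_R\in\closure A\}$. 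Each of the finitely many profile-theorem applications then carries error at most $\varepsilon$ by construction, and no $\eta\to0$ step is needed at all. This is the same device used in the proof of~\eqref{e_variational_underest}; your paragraph on the ``principal obstacle'' has the order of limits right but misses that the radius must be adapted to the centre before the cover is chosen.

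A secondary point: your appeal to lower semicontinuity of $\Ent_R$ via convexity of $\ent$ is an extra ingredient the paper's proof does not use. With the variable-radius cover above it is also unnecessary: the centres may be taken in $\closure A$, so $\min_k\Ent_R(h_R^{(k)})\ge\inf_{\closure A}\Ent_R$ holds directly, and the boundary-condition issue is handled, as you already sketch, by disposing of the case $\inf_{\closure A}I=\infty$ separately via compactness before running the cover argument.
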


\begin{remark}
It is straightforward to reduce the double limits in Theorem~\ref{th_ldp}
to (single) sequential limits, which are more common in large deviations theory.
For example, one may choose any sequences
$(\delta_k)_{k \in \N}, (\varepsilon_k)_{k \in \N}$
such that $\delta_k \to 0$ and $\varepsilon_k \to 0$ as $k \to \infty$.
Then, choose $n_k$ large enough that
\[ \begin{aligned}
	\varliminf_{n \to \infty} \mu_{\delta_k, n}(A) - \varepsilon_k
	\le \mu_{\delta_k,n_k}(A)
	\le \varlimsup_{n \to \infty} \mu_{\delta_k, n}(A) + \varepsilon_k \,,
\end{aligned} \]
and define $\mu_k := \mu_{\delta_k,n_k}$ and $r_k := r_{\delta_k,n_k}$.
Then
\[
	-\inf_{h_R \in \interior{A}} I(h_R)
	\le \varliminf_{k \to \infty} \frac{1}{r_k} \log \mu_k(A)
	\le \varlimsup_{k \to \infty} \frac{1}{r_k} \log \mu_k(A)
	\le -\inf_{h_R \in \closure{A}} I(h_R) \,.
\]
\end{remark}

For the study of limit shapes, it is useful to prove two additional results:
existence and uniqueness of the minimizer of the rate function
from the large deviations principle, i.e.
there exists a unique $h_R^\texttt{min} \in M(R, h_{\partial R})$
such that
\[
	I(h_R^\texttt{min})
	= \inf_{h_R \in M(R, h_{\partial R})} I(h_R) \,.
\]

Indeed, this holds for the simple model studied in the current article.
See for example~\cite{She05} for proofs and discussion of these results.
Even in more subtle models,
the existence of the minimizer is often easy to show:
the proof is standard as long as the local surface tension is convex
and bounded below.
To show uniqueness is harder.
Uniqueness of the minimizer may be proved
using strict convexity of the local surface tension
see for example Proposition~4.5 of~\cite{DSSa08}.
We do not prove these results in the current article,
but rather focus on the variational principle and large deviations principle.

Once existence and uniqueness of the minimizer are established
(or in the language of the current model,
once it is known that the macroscopic entropy functional
admits a unique minimizing height function),
one can explain the appearance of a limit shape in the following way.
The set of asymptotic height functions that lie within distance $\varepsilon$
of this minimizer is an open ball in the space $M(R)$.
By applying the large deviations principle on the set-theoretical complement,
one sees that the percentage of microscopic height functions
in $M(R_n, h_{\partial R_n}, \delta)$ that do not lie $\varepsilon$-close
to the minimizer decays exponentially.
In other words, with high probability a randomly chosen height function
is close to the minimizer,
and therefore the minimizer is the limit shape.

\section{Outline and discussion of proof of main results} \label{s_outline}

In this section we briefly outline the proof of the main results
and summarize some key ideas.
Then we analyze the ingredients in the proof
with an eye toward extending the proof to other random surface models.

In Section~\ref{s_ent}, we provide auxiliary results
including basic properties of
the local surface tension and microscopic entropy.
A central ingredient of the overall argument is discussed in
Section~\ref{s_simple}. There we prove the profile theorem
in the special case of piecewise affine asymptotic height functions.
In Section~\ref{s_profile}, we extend the profile theorem
to general asymptotic height functions by an approximation argument,
yielding the first main result Theorem~\ref{th_profile}.
In Section~\ref{s_varnprin}, we use the profile theorem
and a compactness argument
to prove the variational principle Theorem~\ref{th_varnprin}.
The argument is based on compactness of
the space of asymptotic height functions
with fixed boundary values $M(R, h_{\partial R})$.
Finally in Section~\ref{s_ldp}, we extend the proof of the variational principle
in order to prove the large deviations principle Theorem~\ref{th_ldp}.

As one can see from this outline,
the main idea of the argument is to reduce the proof of the profile theorem from general domains and asymptotic height functions
to simpler domains and asymptotic height functions by an approximation argument.
This means that
the left-hand side of~\eqref{e_profile_theorem}, i.e.~the macroscopic entropy,
and the right-hand side, i.e.~the microscopic entropy,
must both be robust with respect to approximations.

The macroscopic entropy is robust because
$\ent(s)$ is bounded and uniformly continuous,
and Lipschitz functions can be approximated
very well by linear interpolations on a simplex domain (cf.~the simplicial Rademacher theorem Lemma~\ref{lem_approx_tri}).
This approximation lemma was formulated for two dimensions in~\cite{CKP01}.
The result is interesting in its own right
and for the convenience of the reader we state and prove it for arbitrary dimension in Section~\ref{s_profile}.

The microscopic entropy is robust under approximations
because the microscopic surface tension is very robust:
even with fluctuations in the boundary values and the geometry of the boundary,
one still gets the same limit in~\eqref{e_loc_surf_tens}.
This result is proved in Section~\ref{s_ent},
using a Kirszbraun theorem for graph homomorphisms stated below.
This theorem gives conditions under which a graph homomorphism can be extended
from a smaller domain to a larger domain.
This is a discrete analogue to the classical result~\cite{Kir34},
which deals with Lipschitz functions defined on subsets of $\R^d$.
We also note that more general forms of the Kirszbraun theorem for graph
homomorphisms are known, e.g.~\cite{ChPaTa18}.

\begin{theorem}[Kirszbraun theorem for height functions] \label{th_kirszbraun}
Let $\Lambda$ be a connected region of $\Z^m$,
let $S \subset \Lambda$,
and let $\overline h: S \to \Z$ be a graph homomorphism that preserves parity.
There exists a graph homomorphism $h: \Lambda \to \Z$
such that $h = \overline h$ on $S$ if and only if for all $x$, $y$ in $S$,
\begin{equation} \label{e_kirszbraun}
	|\overline h(x) - \overline h(y)| \le |x-y|_1 \,,
\end{equation}
where $|x-y|_1$ is the $\ell^1$-norm in $\Z^m$.
\end{theorem}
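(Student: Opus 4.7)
The plan is to focus on the substantive ``if'' direction: given the $\ell^1$-Lipschitz and parity-preserving hypothesis on $\overline{h} \colon S \to \Z$, construct an extension to $\Lambda$. The ``only if'' direction follows readily once one notes that any graph homomorphism $\Z^m \to \Z$ (and in typical applications $\Lambda$ embeds into a lattice extension with $d = |\cdot|_1$) is automatically $1$-Lipschitz with respect to $\ell^1$, so the inequality passes to $\overline{h} = h|_S$. For the ``if'' direction I induct on $|\Lambda \setminus S|$, extending one vertex at a time while maintaining the invariant that the current partial extension is a parity-preserving graph homomorphism satisfying the stronger $\ell^1$-Lipschitz bound $|h(u) - h(u')| \le |u-u'|_1$ on its entire domain (not merely the Lipschitz bound along edges).

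For the inductive step, let $h \colon T \to \Z$ be such a partial extension with $S \subseteq T \subsetneq \Lambda$, and pick any $v \in \Lambda \setminus T$. I must find $k := h(v) \in \Z$ with $|k - h(u)| \le |v-u|_1$ for all $u \in T$ (to propagate the Lipschitz invariant) and $k \equiv \sum_i v_i \pmod 2$ (parity); the graph-homomorphism condition $|k - h(u)| = 1$ for $u \sim v$ then comes for free. The Lipschitz requirement forces $k \in [L, U]$ with
\[
    L := \max_{u \in T} \bigl( h(u) - |v-u|_1 \bigr), \qquad
    U := \min_{u \in T} \bigl( h(u) + |v-u|_1 \bigr).
\]
By the triangle inequality applied to the inductive Lipschitz bound, $h(u) - h(u') \le |u-u'|_1 \le |v-u|_1 + |v-u'|_1$ for all $u, u' \in T$, so $L \le U$. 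The crucial observation is that $L$ and $U$ themselves have parity $\sum_i v_i \pmod 2$. Indeed, $h(u) \equiv \sum_i u_i \pmod 2$ by parity preservation, and the componentwise identity $|v_i - u_i| \equiv v_i - u_i \pmod 2$ in $\Z$ yields $|v-u|_1 \equiv \sum_i (v_i - u_i) \pmod 2$; hence
\[
    h(u) \pm |v-u|_1 \equiv \sum_i u_i \pm \sum_i (v_i - u_i) \equiv \sum_i v_i \pmod 2 \, ,
\]
and the max (resp.\ min) of integers of common parity retains that parity. Choosing $k := L$ therefore respects parity and the Lipschitz invariant. The graph-homomorphism condition at neighbors $u \sim v$ in $T$ follows because $|v-u|_1 = 1$ gives $|k - h(u)| \le 1$, while $k - h(u)$ is odd (its parity equals $\sum_i (v_i - u_i) \equiv \pm 1 \pmod 2$), forcing $|k - h(u)| = 1$.

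Iterating over an enumeration of $\Lambda \setminus S$ (finite in the setting of interest, otherwise via a Zorn-style argument) yields the desired $h \colon \Lambda \to \Z$. The main conceptual obstacle is the parity constraint: the classical continuous Kirszbraun construction only needs some real $k \in [L, U]$, whereas here $k$ must be an integer of a specified parity. This is resolved by the observation above that the extremes $L, U$ of the admissible interval automatically carry the correct parity, which in turn is why the induction must propagate the stronger $\ell^1$-Lipschitz invariant rather than merely the graph-homomorphism property along edges.
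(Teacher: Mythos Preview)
Your proof is correct. The paper takes a closely related but non-inductive route: it writes down the global formula
\[
    h(y) := \max_{x \in S} \bigl( \overline h(x) - |x - y|_1 \bigr)
\]
at once and then verifies directly that this $h$ restricts to $\overline h$ on $S$, preserves parity, and satisfies $|h(y) - h(\tilde y)| = 1$ for neighbors. Your inductive step is exactly this formula applied locally: your choice $k = L = \max_{u \in T}(h(u) - |v-u|_1)$ is the same McShane--Whitney lower envelope, and an easy triangle-inequality check shows that iterating it over any enumeration of $\Lambda \setminus S$ reproduces the paper's global $h$ (the previously-added vertices never improve the maximum). So the two arguments are the same construction in different dress.

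What each buys: the paper's one-shot formula is shorter and handles infinite $\Lambda$ without any appeal to Zorn, since the formula is defined pointwise. Your inductive presentation makes the parity constraint especially transparent---the observation that $L$ and $U$ automatically carry the parity of $v$ is the heart of the matter and is perhaps more clearly isolated in your write-up than in the paper's global verification. Your hedging on the ``only if'' direction (that it really needs the graph distance in $\Lambda$ to coincide with $|\cdot|_1$) is also well placed; the paper simply calls this direction obvious, which is fine for the convex regions used in its applications but is not literally true for arbitrary connected $\Lambda \subset \Z^m$.
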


\begin{remark}
The parity condition is necessary in general;
consider for example the function $\overline h$ defined on $\{0,2\} \subset \Z$
by $\overline h(0) = 0$, $\overline h(2) = 1$.
The parity condition in Theorem~\ref{th_kirszbraun}
is the reason for the parity condition in Definition~\ref{d_ht_func}.

Two of the authors gave a proof of a more general version of this theorem
in~\cite{MeTa16} (see Theorem~4.1).
The proof is restated below for the reader's convenience.
This proof is also simplified by only addressing the model from this article,
where the height functions take values in $\Z$
rather than in a $d$-regular tree.
\end{remark}

\begin{proof}[Proof of Theorem~\ref{th_kirszbraun}]
Obviously if an extension $h$ of $\overline h$ exists,
then $\overline h$ satisfies~\eqref{e_kirszbraun}.
So, suppose instead that~\eqref{e_kirszbraun},
and let us prove that an extension $h$ exists.
For $y \in \Lambda$, set
\begin{equation} \label{e_kirszbraun_def}
	h(y)
	:= \max \bigl\{ \overline h(x) - |x-y|_1 \,\big|\, x \in S \bigr\} \,.
\end{equation}

We must check two things:
first, that $h(y) = \overline h(y)$ when $y \in S$,
and second, that $|h(y) - h(\tilde y)| = 1$
when $y \sim \tilde y$ are adjacent points in $\Lambda$.

To prove that $h|_S = \overline h$,
let $y \in S$ and consider any point $x \in S$.
By the Lipschitz property of $\overline h$,
\[
	\overline h(x) - \overline h(y)
	\le \bigl| \overline h(x) - \overline h(y) \bigr|
	\le |x - y|_1 \,,
\]
so $\overline h(x) - |x-y|_1 \le \overline h(y)$.
Therefore the maximum in~\eqref{e_kirszbraun_def}
is attained when $x=y$, so $h(y) = \overline h(y) + |y-y|_1 = \overline h(y)$.

To prove that $h$ is a graph homomorphism,
let $y \sim \tilde y$ be adjacent points in $\Lambda$,
and let $x, \tilde x$ be points in $S$
that attain the maximum in~\eqref{e_kirszbraun_def}
for $y, \tilde y$ respectively,
i.e.\@ $h(y) = \overline h(x) - |x-y|_1$
and $h(\tilde y) = \overline h(\tilde x) - |\tilde x - \tilde y|_1$.
Then
\begin{equation} \begin{aligned}
	h(y)
	&= \max \bigl\{ \overline h(z) + |z - y|_1 \,\big|\, z \in S \bigr\} \\
	&\ge \overline h(\tilde x) - |\tilde x - y| \\
	&\ge \overline h(\tilde x) - |\tilde x - \tilde y| - 1 \\
	&= h(\tilde y) - 1 \,,
\end{aligned} \end{equation}
and likewise $h(\tilde y) \ge h(y) - 1$.

For every $x \in S$,
the map $y \mapsto \overline h(x) + |x-y|_1$ preserves parity
(recall the assumption that $\overline h$ preserves parity),
and therefore so does $h$.
So $h$ is a parity-preserving map such that $|h(y) - h(\tilde y)| \le 1$
whenever $y$ and $\tilde y$ are neighbors.
This proves that $h$ is a graph homomorphism.
\end{proof}

Now, we describe further how to prove the central theorem of this article,
i.e.\@ the profile theorem
in the special case of piecewise affine height functions.
We derive the desired asymptotic equality by showing two inequalities.
One direction of the inequality arises by overcounting the number of height
functions that are close to the piecewise affine height profile;
the opposite direction arises by undercounting the same set.
In both directions, we subdivide the region into small blocks,
so that we can compare the entropy on each block to the local surface tension
(see Definition~\ref{d_loc_surf_tens} and Figure~\ref{f_decomp_tri}).
To overcount, we consider all choices of boundary values
on the boundaries of the blocks,
and for each boundary value function we count all possible extensions
into the interior of the blocks.
To undercount we have to use much smaller blocks,
with boundary values fixed to match the desired affine function exactly
(after rescaling, and up to rounding).
The details of the proof are given in Section~\ref{s_simple}.
The more difficult part of the proof is the overcounting argument,
which relies on robustness of the microscopic entropy.
We expect this to be a major source of difficulty
when adapting our methods to other models.

As one can see, the framework of this argument is quite general
and it can be adapted to more complicated models and settings.
For example, the model of graph homomorphism into the infinite $d$-regular tree,
studied by some of the current authors in~\cite{MeTa16},
is amenable to this approach.
Additionally, the authors have applied the current strategy
to $\Z$-valued homomorphisms sampled according to a random environment.
This means that the underlying combinatorial model
is the same as in the current article,
but in the definition of the microscopic entropy,
the (uniform) counting measure on $M(R_n, h_{\partial R_n})$
is replaced by a randomly perturbed measure.
The conclusion is a homogenized variational principle,
meaning that the microscopic entropy $\Ent_n(M(R_n, h_{\partial R_n}))$,
now a random variable depending on the realization of the environment,
converges in probability to the minimum of the macroscopic entropy,
which is still a deterministic quantity.
Furthermore, we hope the method applies to other height function models,
such as domino tilings (as studied in e.g.~\cite{CKP01}),
and perhaps even more general tilings (as in e.g.~\cite{She02,Thu90}).

\section{Microscopic entropy and surface tension} \label{s_ent}

In this section, we prove basic properties of the microscopic entropy
and local surface tension.
More precisely, we prove
that $\ent(s)$ is continuous (see Lemma~\ref{lem_cont}),
that $\ent_n(s) \to \ent(s)$ uniformly (see Lemma~\ref{lem_unif_conv}),
and that $\Ent_{R_n}$ is robust under small changes to boundary values
(see Lemma~\ref{lem_robust}).

All three of these proofs split into two cases:
values of the slope $s$ that are close to $1$
(that is, such that $|s|_\infty \ge 1 - \varepsilon$),
where there are comparatively few possible states because of the steep slope;
and slopes away from $1$ (i.e.~$|s|_\infty \le 1 - \varepsilon$),
where we can make arguments based on extending height functions
from one domain to another via the Kirszbraun theorem~%
(Theorem~\ref{th_kirszbraun}).

The first result we state is
about the microscopic entropy for slopes close to $1$.
This lemma is used in the remainder of the section
to handle the case of $s$ close to $1$.

\begin{lemma}[Microscopic entropy for slopes near 1] \label{lem_ent_near_1}
Let $\delta > 0$,
let $s \in [-1,1]^m$ with $|s|_\infty > 1 - \delta$,
and let $n \in \N$.
Consider any boundary height function
$h_{\partial Q_n} \in M(\partial Q_n)$
such that
\begin{equation} \label{e_ent_near_1_cond}
	\sup_{z \in \partial Q_n} \bigl| h_{\partial Q_n}(z)
		- h_{\partial Q_n}^s(z) \bigr|
	\le \delta n \,.
\end{equation}

Then,
\[
	\Ent_{Q_n} \bigl( M(Q_n, h_{\partial Q_n}) \bigr)
	= \theta(\delta) + \theta_{\delta}(\tfrac{1}{n}) \,.
\]
\end{lemma}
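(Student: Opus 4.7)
The plan is to exploit that when $|s|_\infty \ge 1 - \delta$, the boundary values $h_{\partial Q_n}$ force any extension to march almost monotonically along the extremal coordinate direction, leaving very little combinatorial freedom. After permuting coordinates and possibly flipping $h \mapsto -h$ (operations under which both the hypothesis and the conclusion are invariant), I may assume $s_1 \ge 1 - \delta$. Decompose $Q_n$ into its $n^{m-1}$ axis-parallel rows $L_{z'} := \{(k, z') : 0 \le k < n\}$ indexed by $z' \in \bigl( [0,n) \cap \Z \bigr)^{m-1}$. Both endpoints of each row lie in $\partial Q_n$, so any $h_{Q_n} \in M(Q_n, h_{\partial Q_n})$ restricts on $L_{z'}$ to a $\pm 1$ walk of length $n - 1$ whose endpoint difference $D_{z'}$ is fixed by $h_{\partial Q_n}$. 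Combining the hypothesis $\sup|h_{\partial Q_n} - h_{\partial Q_n}^s| \le \delta n$ with the rounding bound $|h_{Q_n}^s(z) - s \cdot z| \le 1$ and with $s_1 \ge 1 - \delta$ yields $D_{z'} \ge (1 - C\delta) n - C$ for an absolute constant $C$; since a $\pm 1$ walk of length $n - 1$ forces $|D_{z'}| \le n - 1$, either $M(Q_n, h_{\partial Q_n}) = \emptyset$ and the claim is vacuous, or $D_{z'} \in [(1 - C\delta) n - C,\, n - 1]$ on every row.

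A $\pm 1$ walk of length $n - 1$ whose endpoint difference lies in this window has at most $C' \delta n$ down-steps, so the number of such walks is bounded by $\sum_{k \le C' \delta n} \binom{n-1}{k} \le 2^{(n-1) H_2(C'\delta) + O(\log n)}$, where $H_2$ is the binary entropy function. A height function on $Q_n$ is determined by its restrictions to the rows $\{L_{z'}\}$, so the product of the per-row bounds gives
\[
|M(Q_n, h_{\partial Q_n})| \;\le\; 2^{n^m \bigl( H_2(C' \delta) + \theta_\delta(1/n) \bigr)}.
\]
Dividing by $|Q_n| = n^m$ and using $H_2(C'\delta) = \theta(\delta)$ produces $\Ent_{Q_n}\bigl(M(Q_n, h_{\partial Q_n})\bigr) \ge -\theta(\delta) - \theta_\delta(1/n)$, which is the hard direction of the asserted identity. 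The matching upper bound $\Ent_{Q_n}(M(Q_n, h_{\partial Q_n})) \le 0$ is equivalent to $|M(Q_n, h_{\partial Q_n})| \ge 1$, which is implicit (without it the microscopic entropy is undefined) and follows from Theorem~\ref{th_kirszbraun} whenever $h_{\partial Q_n}$ is genuinely $\ell^1$-Lipschitz.

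The main obstacle I expect is the careful bookkeeping that isolates $\theta(\delta)$ from $\theta_\delta(1/n)$ within the single combinatorial estimate above: $H_2(C'\delta)$ is a genuine $\theta(\delta)$, but the replacement of $H_2\bigl(k^\ast/(n-1)\bigr)$ by $H_2(C'\delta)$ (where $k^\ast$ is the integer bound on down-steps) and the absorption of the $2^{O(\log n)}$ combinatorial prefactor into the exponent each contribute $\theta_\delta(1/n)$ corrections that require $n$ to be large relative to $1/\delta$. The product-over-rows bound is also loose, since neighbouring rows share lattice neighbours in $Q_n$, but the per-row contribution is already sub-exponential at rate $\theta(\delta)$, so this looseness is absorbed into the same $\theta_\delta(1/n)$ error and does not affect the final estimate.
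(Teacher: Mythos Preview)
Your argument is correct and follows essentially the same route as the paper: reduce by symmetry to the case where one coordinate of $s$ is near $1$, slice $Q_n$ into axis-parallel rows, use the boundary hypothesis to bound the number of down-steps on each row by $O(\delta n)$, and take the product of the resulting binomial/entropy bounds over the $n^{m-1}$ rows. The paper's version is a bit terser on the error bookkeeping and does not separately address the trivial direction $\Ent \le 0$, but the strategy and the key inequality are the same.
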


\begin{proof}
First, consider the one-dimensional case, i.e.~$m=1$.
Then the problem reduces to a simple calculation.
The main idea is that the large slope $s$ forces a height function
$h_{Q_n} \in M(Q_n, h_{\partial Q_n})$
to closely follow a line of slope $\pm 1$.
By counting the number of deviating edges
we overestimate the number of height functions.

Indeed, we assume without loss of generality that $\R^1 \ni s > 1 - \delta$
(the case $s < -(1-\delta)$ is symmetric).
We want to count height functions $h_{Q_n} \in M(Q_n, h_{\partial Q_n})$.
The line graph $Q_n = \{-n, -(n-1), \dotsc, n-1, n\}$
has $2n$ edges;
let us write $k$ for the number of edges on which $h_{Q_n}$ decreases
(see Figure~\ref{f_ht_func_near_1}).
Then the height difference $h_{Q_n}(n) - h_{Q_n}(-n)$
is exactly $(2n-k) - k$, which we simplify to $2(n-k)$.
By~\eqref{e_ent_near_1_cond},
we have (ignoring rounding errors)
\[ \begin{aligned}
	h_{Q_n}(n) - h_{Q_n}(-n)
	&= h_{\partial Q_n}^s(n) - h_{\partial Q_n}^s(-n) \\
	&= sn - s(-n) \\
	&> 2(1-\delta)n \,.
\end{aligned} \]

Therefore, $2(n-k) > 2(1-\delta)n$, so $k \le \delta n$.
It follows that
\[
	\bigl| M(Q_n, h_{\partial Q_n}) \bigr|
	\le \binom{2n}{k}
	\le \binom{2n}{\lceil \delta n \rceil} \,,
\]
and the limit
\[
	\lim_{n \to \infty} \, \frac{1}{2n+1}
		\log \binom{2n}{\lceil \delta n \rceil}
	= - \delta \log \delta - (1-\delta) \log (1-\delta)
	= \theta(\delta)
\]
is an easy calculation using Stirling's formula.

\begin{figure}
	\centering
	\begin{tikzpicture}[
		x=0.15in,y=0.15in,
		every circle/.style={radius=0.015625in},
	]
		\draw[help lines] (-5.8,-5.8) grid[step=1] (5.8,5.8);
		\draw[<->,help lines,thick] (-6.5,0) -- (6.5,0)
			node [anchor=north] {$x$};
		\draw[<->,help lines,thick] (0,-6.5) -- (0,6.5)
			node [anchor=west] {$h(x)$};
		\path[draw,fill] (-5.8,-3.8)
			-- (-5,-3) circle -- (-4,-2) circle -- (-3,-1) circle
			-- (-2,0) circle -- (-1,-1) circle -- (0,0) circle
			-- (1,1) circle -- (2,2) circle -- (3,3) circle
			-- (4,4) circle -- (5,5) circle -- (5.8,5.8);
	\end{tikzpicture}
	\caption{A one-dimensional height function with slope $s > 1-\delta$.
	Because the slope is close to $1$, there cannot be many edges
	along which $h(x)$ decreases.}
	\label{f_ht_func_near_1}
\end{figure}
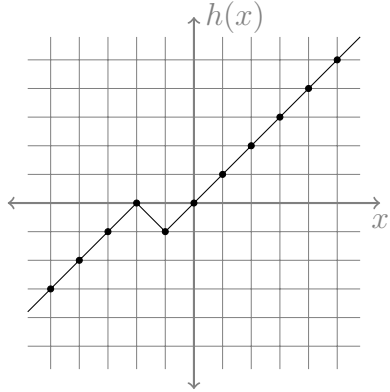

\medskip

For higher dimensions, we reduce to the one-dimensional case
by treating the hypercube $\{-n,\dotsc,n\}^m$ as the union
of $(2n+1)^{m-1}$ independent lines.
In so doing we overestimate $|M(Q_n, h_{\partial Q_n})|$,
because we relax the graph homomorphism condition between lines.
Thus
\[
	|M(Q_n, h_{\partial Q_n})|
	\le \binom{2n}{\lceil \delta n \rceil}^{(2n+1)^{m-1}} \,.
\]

Taking a logarithm and dividing by $-|Q_n| = -(2n+1)^m$ yields
\[
	\Ent_{Q_n} \bigl( M(Q_n, h_{\partial Q_n}) \bigr)
	\ge -\frac{1}{2n+1} \log \binom{2n}{\lceil \delta n \rceil} \,,
\]
which completes the proof (and in particular shows why the
$\theta$ error terms do not depend on the dimension $m$).
\end{proof}

While Lemma~\ref{lem_ent_near_1} deals with slopes $s$ close to $1$,
a different approach is needed for slopes away from $1$.
We use Theorem~\ref{th_kirszbraun}, which is a Kirszbraun theorem
for graph homomorphisms.
It gives a simple criterion for when a height function
can be extended to larger domain.
Lemma~\ref{lem_ent_kirsz} applies the Kirszbraun theorem
to derive entropy estimates.
In particular, for two box sizes $n < \hat n$, the lemma compares
$\Ent_{Q_n}(M(Q_n, h_{\partial {Q_n}}))$ and
$\Ent_{Q_{\hat n}}(M(Q_{\hat n}, h_{\partial {Q_{\hat n}}}))$.
The key idea is that any height function on the smaller box $Q_n$
can be extended to a height function on $Q_{\hat n}$,
respecting the boundary data $h_{\partial Q_{\hat n}}$.
Therefore (up to vanishing error terms),
$
	\Ent_{Q_{\hat n}}(M(Q_{\hat n}, h_{\partial {Q_{\hat n}}}))
	\le
	\Ent_{Q_n}(M(Q_n, h_{\partial {Q_n}}))
$.

The extension requires that the boundary data
$h_{\partial Q_n}$ and $h_{\partial Q_{\hat n}}$ be sufficiently similar.
In particular, we will assume that both boundary height functions are
close to linear height functions, with slopes $s$ and $\hat s$ respectively.
The parameter $\varepsilon$ quantifies how close $h_{\partial Q_n}$
and $h_{\partial Q_{\hat n}}$ are to their respective linear height functions.

We also require that the slopes $s$ and $\hat s$ be close to each other,
which is obviously necessary to apply the Kirszbraun theorem in our setting.
Finally, we require that the two boxes sizes $n$ and $\hat n$ be not too
different. In particular, we take $\hat n = (1 + \delta) n$,
where $\delta$ is a second approximation parameter.
$\delta$ also shows up in a few other bounds,
and in the conclusion of the lemma as a $\theta_m(\delta)$ error term.

This is not the simplest lemma of its kind that we could state,
nor is it the most general.
We choose to state these conditions because they are sufficient for our
applications in this section.
Moreover, they are necessary in the sense that simplifying any condition,
e.g.~by using only a single slope $s$ rather than two slopes,
or by using linear boundary height functions without than allowing $\varepsilon$
fluctuations,
would not suffice for our purposes.

\begin{lemma}[Entropy estimates from the Kirszbraun theorem]
\label{lem_ent_kirsz}
Let $\delta \in (0, \tfrac{1}{3})$,
$\varepsilon \in (0, \tfrac{\delta^2}{2+\delta}]$,
$n, \hat n \in \N$,
$s, \hat s \in [-1,1]^m$,
$h_{\partial Q_n} \in M(\partial Q_n)$,
and $h_{\partial Q_{\hat n}} \in M(\partial Q_{\hat n})$
satisfy:
\begin{itemize}
\item $\hat n = \ceil{(1+\delta) n}$ or $\hat n = \ceil{(1+\delta) n}+1$,
\item $|s|_\infty \le 1 - 3\delta$
	and $|s-\hat s|_\infty \le \tfrac{\delta^2}{1+\delta}$,
\item $\max_{z \in \partial Q_n} |h_{\partial Q_n}(z) - s \cdot z|
	\le \varepsilon n$, and
\item $\max_{z \in \partial Q_{\hat n}}
		|h_{\partial Q_{\hat n}}(z) - \hat s \cdot z|
	\le \varepsilon \hat n$.
\end{itemize}
See Figure~\ref{f_ent_kirsz_domains} for a partial illustration.
Then:
\begin{equation} \label{e_ent_kirsz}
	\Ent_{Q_{\hat n}} \bigl( M(Q_{\hat n}, h_{\partial Q_{\hat n}}) \bigr)
	\le \Ent_{Q_n} \bigl( M(Q_n, h_{\partial Q_n}) \bigr)
		+ \theta_m(\delta) + \theta_m(\tfrac{1}{n}) \,.
\end{equation}
\end{lemma}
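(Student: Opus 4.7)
The plan is to construct an injection $\Phi \colon M(Q_n, h_{\partial Q_n}) \hookrightarrow M(Q_{\hat n}, h_{\partial Q_{\hat n}})$ by using the Kirszbraun theorem (Theorem~\ref{th_kirszbraun}) to extend each height function on $Q_n$ to one on $Q_{\hat n}$ respecting the prescribed outer boundary. From such an injection one has $|M(Q_n, h_{\partial Q_n})| \le |M(Q_{\hat n}, h_{\partial Q_{\hat n}})|$, and taking $-\log$ and dividing by $|Q_{\hat n}|$ gives
\[
	\Ent_{Q_{\hat n}}\bigl(M(Q_{\hat n}, h_{\partial Q_{\hat n}})\bigr)
	\le \tfrac{|Q_n|}{|Q_{\hat n}|} \Ent_{Q_n}\bigl(M(Q_n, h_{\partial Q_n})\bigr).
\]
Combined with $|Q_n|/|Q_{\hat n}| = (n/\hat n)^m = 1 - \theta_m(\delta) - \theta_m(\tfrac{1}{n})$ and the boundedness $\Ent_{Q_n}(\cdot) \in [-\ln 2, 0]$ (Observation~\ref{obs_ent_bdd}), this produces the claimed inequality.

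To construct $\Phi$, I embed a translated copy $\tilde Q_n := Q_n + v \subset Q_{\hat n}$ with a buffer, taking $v = (k,\dotsc,k) \in \Z^m$ with $k := \floor{(\hat n - n)/2}$ of order $\delta n/2$, so that the $\ell^1$-distance from $\tilde Q_n$ to $\partial Q_{\hat n}$ is at least $k$. Given $h_{Q_n} \in M(Q_n, h_{\partial Q_n})$, I define
\[
	\tilde h(z) := h_{Q_n}(z - v) + c \qquad (z \in \tilde Q_n),
\]
where $c \in \Z$ is chosen with the parity of $mk$ (so $\tilde h$ is parity-preserving) and as the nearest such integer to $s \cdot v$; this yields $|\tilde h(w) - s \cdot w| \le \varepsilon n + 1$ for every $w \in \partial \tilde Q_n$. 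Then $\Phi(h_{Q_n})$ is obtained by Kirszbraun-extending the partial function equal to $\tilde h$ on $\tilde Q_n$ and to $h_{\partial Q_{\hat n}}$ on $\partial Q_{\hat n}$ to all of $Q_{\hat n}$; using the canonical max-formula from the proof of Theorem~\ref{th_kirszbraun} makes $\Phi$ well defined, and injectivity is immediate from $\Phi(h_{Q_n})|_{\tilde Q_n} = \tilde h$.

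The decisive step is verifying the cross 1-Lipschitz condition required by Theorem~\ref{th_kirszbraun}: for all $z \in \tilde Q_n$ and $z' \in \partial Q_{\hat n}$,
\[
	\bigl|\tilde h(z) - h_{\partial Q_{\hat n}}(z')\bigr| \le |z - z'|_1.
\]
I fix such a pair and pick $w \in \partial \tilde Q_n$ on a lattice $\ell^1$-geodesic from $z$ to $z'$, so $|z - w|_1 + |w - z'|_1 = |z - z'|_1$ and $|w - z'|_1 \ge k$. Using 1-Lipschitzness of $\tilde h$ on $\tilde Q_n$, the linear approximations of $\tilde h$ on $\partial \tilde Q_n$ and of $h_{\partial Q_{\hat n}}$ on $\partial Q_{\hat n}$, and the decomposition $s \cdot w - \hat s \cdot z' = \hat s \cdot (w - z') + (s - \hat s) \cdot w$, the upper bound reduces to checking
\[
	\hat s \cdot (w - z') + (s - \hat s) \cdot w + \varepsilon(n + \hat n) + 1 \le |w - z'|_1.
\]
The first summand is at most $(1 - 2\delta)|w - z'|_1$ since $|\hat s|_\infty \le |s|_\infty + |s - \hat s|_\infty \le 1 - 2\delta$ (using both slope hypotheses), which yields a gain of $2\delta|w - z'|_1 \ge 2\delta k$; the remaining error terms are governed by $|s - \hat s|_\infty \le \tfrac{\delta^2}{1+\delta}$ and $\varepsilon \le \tfrac{\delta^2}{2 + \delta}$. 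The symmetric lower bound is checked analogously.

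The main obstacle is precisely this cross-condition verification. The slope-mismatch error $(s - \hat s) \cdot w$ can scale with $|w|_1$ and thus compete with the $\delta$-scaled gain, so the hypotheses are sharp: the strict slope bound $|s|_\infty \le 1 - 3\delta$, the quadratic-in-$\delta$ closeness $|s - \hat s|_\infty \le \tfrac{\delta^2}{1+\delta}$, the uniform boundary approximation $\varepsilon \le \tfrac{\delta^2}{2+\delta}$, and the buffer $\hat n - n \approx \delta n$ are calibrated so that the gain dominates the errors uniformly in the pair $(z, z')$ and in the direction of approach to $\partial Q_{\hat n}$.
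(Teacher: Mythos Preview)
Your approach is essentially the same as the paper's: build an injection $M(Q_n,h_{\partial Q_n})\hookrightarrow M(Q_{\hat n},h_{\partial Q_{\hat n}})$ by Kirszbraun-extending each height function across the annulus, then compare normalizing volumes. The paper's verification of the cross $1$-Lipschitz condition uses the same triangle-inequality decomposition and the same three ingredients (the $\varepsilon$-approximation on each boundary, the strict slope bound $|s|_\infty\le 1-3\delta$ to gain $3\delta|z-\hat z|_1$, and the $|s-\hat s|_\infty$ bound to control the slope-mismatch term).

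One remark on the translation step. The paper treats $Q_n$ and $Q_{\hat n}$ as concentric (see the figure caption: side lengths $2n{+}1$ and $2\hat n{+}1$), so no shift is needed and the buffer is $\hat n-n\ge\delta n$. Your centering of $[0,n)^m$ inside $[0,\hat n)^m$ halves the buffer to $k=\lfloor(\hat n-n)/2\rfloor\approx\delta n/2$, and your chosen decomposition $s\cdot w-\hat s\cdot z'=\hat s\cdot(w-z')+(s-\hat s)\cdot w$ only yields a gain of $2\delta|w-z'|_1\ge 2\delta k\approx\delta^2 n$, while the accumulated error $\varepsilon(n+\hat n)+|(s-\hat s)\cdot w|$ is already of order $\delta^2 n$ (and larger once the dimension enters via $|w|_1\le m\hat n$). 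With the concentric placement and the decomposition $s\cdot(z-\hat z)+(s-\hat s)\cdot\hat z$ the gain is $3\delta\cdot\delta n=3\delta^2 n$, which is what the stated constants are calibrated for. So either drop the translation and work concentrically, or keep it but use the $s$-based decomposition to recover the $3\delta$ gain; as written, your sketch is a constant factor short of closing under the lemma's exact hypotheses.
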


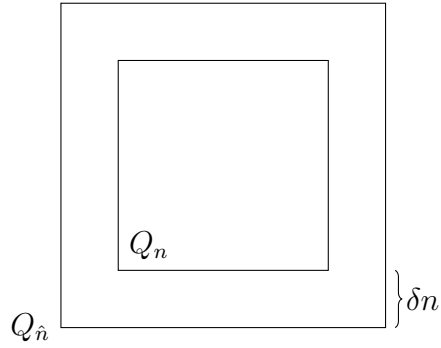
\begin{figure}
	\centering
	\begin{tikzpicture}[
		x=0.01in,y=0.01in,
	]
		\path[draw] (-55,-55)
			node [anchor=south west]{$Q_n$}
			-- (55,-55) -- (55,55) -- (-55,55) -- cycle;
		\path[draw] (-85,-85)
			node [anchor=east] {$Q_{\hat n}$}
			-- (85,-85) -- (85,85) -- (-85,85) -- cycle;
		\path[draw,decorate,decoration=brace]
			(85,-55) + (0.05in,0) --
			node [anchor=west] {$\delta n$}
			+ (0.05in,-30);
	\end{tikzpicture}
	\caption{Domains for Lemma~\ref{lem_ent_kirsz}.
	The smaller domain $Q_n$ is a hypercube of side length $2n+1$,
	and the larger domain $Q_{\hat n}$ has side length $2 \hat n + 1$,
	where $\hat n = (1+\delta)n$.
	}
	\label{f_ent_kirsz_domains}
\end{figure}

\begin{proof}[Proof of Lemma~\ref{lem_ent_kirsz}]
We apply the Kirszbraun theorem (Theorem~\ref{th_kirszbraun})
to construct an injection from $M(Q_n, h_{\partial Q_{\hat n}})$
into $M(Q_{\hat n}, h_{\partial Q_{\hat n}})$.
The existence of such an injection implies that
\[
	\bigl| M(Q_{\hat n}, h_{\partial Q_{\hat n}}) \bigr|
	\ge \bigl| M(Q_n, h_{\partial Q_n}) \bigr| \,
\]
so
\[
	\Ent_{Q_{\hat n}} \bigl( M(Q_{\hat n}, h_{\partial Q_{\hat n}}) \bigr)
	\le \frac{|Q_n|}{|Q_{\hat n}|}
		\Ent_{Q_n} \bigl( M(Q_n, h_{\partial Q_n}) \bigr)
	\,.
\]

Of course,
$\frac{|Q_n|}{|Q_{\hat n}|} = (\frac{n}{\hat n})^m + \theta_m(\tfrac{1}{n})$
and
\[
	\frac{n}{\hat n}
	= \frac{n}{(1 + \delta)n} + \theta(\tfrac{1}{n})
	= 1 + \theta(\delta) + \theta(\tfrac{1}{n}) \,.
\]
Since $\Ent_{Q_n}(M(Q_n, h_{\partial Q_n}))$ is bounded
(see Observation~\ref{obs_ent_bdd}),
the conclusion follows.

\smallskip

So, it remains to show that any height function
$h_{Q_n} \in M(Q_n, h_{\partial Q_n})$
can be extended to a height function in
$M(Q_{\hat n}, h_{\partial Q_{\hat n}})$.
We want to apply the Kirszbraun theorem.
The parity condition is automatic from our parity assumption
in the definition of height functions (see Definition~\ref{d_ht_func}).
We must verify inequality~\eqref{e_kirszbraun}.

Let $z \in \partial Q_n$ and $\hat z \in \partial Q_{\hat n}$.
By the triangle inequality,
\[ \begin{aligned}
	\hskip3em&\hskip-3em
	\bigl| h_{\partial Q_n}(z) - h_{\partial Q_{\hat n}}(\hat z) \bigr| \\
	&\le \bigl| h_{\partial Q_n}(z) - s \cdot z \bigr|
	+ \bigl| s \cdot (z - \hat z) \bigr| \\
	&\qquad + \bigl| (s - \hat s) \cdot \hat z \bigr|
	+ \bigl| \hat s \cdot \hat z - h_{\partial Q_{\hat n}}(\hat z) \bigr| \\
	&\le \varepsilon n + |s|_\infty |z - \hat z|_1 \\
	&\qquad + |s - \hat s|_\infty + \varepsilon \hat n \\
	&\le \varepsilon n + (1 - 3\delta) |z - \hat z|_1 \\
	&\qquad + \tfrac{\delta^2}{1+\delta} \bigl[ (1+\delta) n + 1 \bigr]
	+ \varepsilon \bigl[ (1+\delta) n + 1 \bigr] \\
	&= \varepsilon (2+\delta) n + (1-3\delta) |z - \hat z|_1
	+ \delta^2 n \\
	&\qquad + \tfrac{\delta^2}{1+\delta} + \varepsilon \\
	&\le \tfrac{\delta^2}{2+\delta} (2+\delta) n
	+ (1-3\delta) |z - \hat z|_1 + \delta^2 n \\
	&\qquad + \tfrac{\delta^2}{1+\delta} + \tfrac{\delta^2}{2+\delta} \\
	&\le 2 \delta^2 n + (1 - 3\delta) |z - \hat z|_1 + 2\delta^2 \,.
\end{aligned} \]

By definition, $|z - \hat z|_1 \ge \hat n - n \ge \delta n$,
so $2\delta^2 n \le 2\delta |z - \hat z|_1$.
Moreover, the leftover term $2 \delta^2$ is $\le 1$, so
\[
	\bigl| h_{\partial Q_n}(z) - h_{\partial Q_{\hat n}}(\hat z) \bigr|
	\le (1-\delta) |z - \hat z|_1 + 1 \,.
\]

We assumed that $\delta |z - \hat z|_1 \ge \delta^2 n > 1$,
so the right-hand side above is strictly less that $|z - \hat z|_1$.
Therefore the Kirszbraun theorem applies,
which completes this proof.
\end{proof}

Now, we may quickly state and prove a few useful properties
of the microscopic entropy and surface tension,
using Lemma~\ref{lem_ent_near_1} and Lemma~\ref{lem_ent_kirsz} for the proofs.

\begin{lemma}[Continuity of $\ent(s)$] \label{lem_cont}
The function $s \mapsto \ent(s)$, from $[-1,1]^m$ to $[-\log 2, 0]$,
is  continuous.
In fact, since the domain is compact, $s \mapsto \ent(s)$ is uniformly
continuous.
\end{lemma}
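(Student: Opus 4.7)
The plan is to reduce Lemma~\ref{lem_cont} to a dichotomy based on how close $|s|_\infty$ is to $1$, matching the split that the surrounding preparatory lemmas were designed for. Given $\delta \in (0, \tfrac{1}{4})$, I will show that for $\hat s$ sufficiently close to $s$ (with closeness depending only on $\delta$ and $m$), one has $|\ent(s) - \ent(\hat s)| \le \theta_m(\delta)$. Compactness of $[-1,1]^m$ will then immediately upgrade continuity to uniform continuity, and boundedness of $\ent$ was already recorded in Observation~\ref{obs_surf_tens_bdd}.

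Case~1 handles slopes near the boundary of the cube. Suppose $\max(|s|_\infty, |\hat s|_\infty) \ge 1 - 3\delta$ and $|s - \hat s|_\infty \le \delta$, so that both $|s|_\infty$ and $|\hat s|_\infty$ exceed $1 - 4\delta$. For each $n$, the affine boundary data $h^{s \cdot x}_{\partial Q_n}$ trivially satisfies the hypothesis of Lemma~\ref{lem_ent_near_1} (with error $0$), so
\[
    \ent_n(s) = \Ent_{Q_n}\bigl(M(Q_n, h^{s \cdot x}_{\partial Q_n})\bigr)
    = \theta(\delta) + \theta_\delta(\tfrac{1}{n}),
\]
and likewise for $\hat s$. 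Passing to the limit $n \to \infty$ gives $\ent(s), \ent(\hat s) = \theta(\delta)$, hence $|\ent(s) - \ent(\hat s)| = \theta(\delta)$.

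Case~2 handles slopes away from the boundary: assume $\max(|s|_\infty, |\hat s|_\infty) \le 1 - 3\delta$ and $|s - \hat s|_\infty \le \tfrac{\delta^2}{1+\delta}$. Here I invoke Lemma~\ref{lem_ent_kirsz} with $\hat n = \lceil (1+\delta) n \rceil$ and with $h_{\partial Q_n} := h^{s \cdot x}_{\partial Q_n}$, $h_{\partial Q_{\hat n}} := h^{\hat s \cdot x}_{\partial Q_{\hat n}}$. Because $|h^{s \cdot x}_{\partial Q_n}(z) - s \cdot z| \le 1$ on the boundary, the required tolerance $\varepsilon$ can be chosen as $1/n$, which is below $\tfrac{\delta^2}{2+\delta}$ once $n$ is large enough (depending only on $\delta$). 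The conclusion of the Kirszbraun-based lemma gives
\[
    \ent_{\hat n}(\hat s) \le \ent_n(s) + \theta_m(\delta) + \theta_m(\tfrac{1}{n}).
\]
Letting $n \to \infty$ (hence $\hat n \to \infty$) yields $\ent(\hat s) \le \ent(s) + \theta_m(\delta)$, and exchanging the roles of $s$ and $\hat s$ (applying the same lemma with the roles reversed) produces the opposite inequality. Combining both cases, $\ent$ is uniformly continuous on $[-1,1]^m$.

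The main obstacle is bookkeeping rather than a conceptual difficulty: matching the parameters $\delta$, $\varepsilon$, $n$, $\hat n$ to the precise hypotheses of Lemma~\ref{lem_ent_kirsz}, and verifying that the $\theta_m(1/n)$ error from comparing the volumes $|Q_n|$ and $|Q_{\hat n}|$ (which hides inside the Kirszbraun-based lemma via the ratio $|Q_n|/|Q_{\hat n}|$ and the boundedness of $\ent_n$) genuinely vanishes before the limit in $n$ is taken. Once that is in place, the two case-distinction lemmas of the section do all the real work.
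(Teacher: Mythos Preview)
Your proof is correct and follows essentially the same approach as the paper: split into the near-boundary case handled by Lemma~\ref{lem_ent_near_1} and the interior case handled by Lemma~\ref{lem_ent_kirsz}, then exchange the roles of $s$ and $\hat s$ for the reverse inequality. The only cosmetic difference is that you package the argument directly as a uniform modulus of continuity, whereas the paper proves pointwise continuity and then invokes compactness.
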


\begin{proof}[Proof of Lemma~\ref{lem_cont}]
First, if $|s|_\infty = 1$, Lemma~\ref{lem_ent_near_1} implies $\ent(s) = 0$.
As $|s|_\infty \to 1$, the same lemma implies
$\ent(s) = \theta(1 - |s|_\infty) \to 0$.
So, $s \mapsto \ent(s)$ is continuous at points $s$ with $|s|_\infty = 1$.

Suppose instead that $|s|_\infty < 1$.
In the language of Lemma~\ref{lem_ent_kirsz},
let $\delta < \tfrac{1}{3} \wedge (1 - |s|_\infty)$,
let $\varepsilon = \tfrac{\delta^2}{2+\delta}$,
and consider any $\hat s$
satisfying $|s - \hat s|_\infty < \tfrac{\delta^2}{1+\delta}$.
For any $n, \hat n$ as in Lemma~\ref{lem_ent_kirsz},
take $h_{\partial Q_n} = h_{\partial Q_n}^s$
and $h_{\partial Q_{\hat n}} = h_{\partial Q_{\hat n}}^{\hat s}$.
Then by Lemma~\ref{lem_ent_kirsz},
\[
	\ent_n(s)
	\ge \ent_{\hat n}(\hat s)
	+ \theta_m(\delta) + \theta_m(\tfrac{1}{n}) \,.
\]

Taking $n \to \infty$, we have
$\ent(s) \ge \ent(\hat s) + \theta_m(\delta)$;
taking $\delta \to 0$,
we conclude that $\ent(s) \ge \liminf_{\hat s \to s} \ent(\hat s)$.
By exchanging the role of $s$ and $\hat s$,
we conclude that $\ent(s) \le \limsup_{\hat s \to s} \ent(\hat s)$.
Therefore the function $s \mapsto \ent(s)$ is continuous.
\end{proof}

\medskip

\begin{lemma}[Uniform convergence of $\ent_n(s)$] \label{lem_unif_conv}
For a fixed dimension $m$,
the convergence of $\ent_n(s) \to \ent(s)$ is uniform in $s \in [-1,1]^m$.
In other words,
\[
	\ent(s) = \ent_n(s) + \theta_m(\tfrac{1}{n}) \,.
\]
\end{lemma}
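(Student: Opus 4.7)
The plan is to split $[-1,1]^m$ into a near-boundary regime $\{|s|_\infty > 1 - \eta\}$ and a bulk regime $K_\eta := \{|s|_\infty \le 1 - \eta\}$, handle each with a different tool, and combine. In the near-boundary regime, Lemma~\ref{lem_ent_near_1} applied to the boundary data $h_{\partial Q_n} = h_{\partial Q_n}^{s \cdot x}$, which trivially satisfies~\eqref{e_ent_near_1_cond}, yields $\ent_n(s) = \theta(\eta) + \theta_\eta(\tfrac{1}{n})$; letting $n \to \infty$ also gives $\ent(s) = \theta(\eta)$, so $|\ent_n(s) - \ent(s)| \le \theta(\eta) + \theta_\eta(\tfrac{1}{n})$ uniformly on this region.

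For the bulk regime $K_\eta$, set $\delta := \eta/3$ so that Lemma~\ref{lem_ent_kirsz} is applicable to slopes in $K_\eta$. I cover $K_\eta$ by finitely many balls $B(s_i, \rho)$ of radius $\rho = \delta^2/(1+\delta)$, the largest radius compatible with the lemma's slope hypothesis, centered at $s_1, \dotsc, s_L$. Pointwise convergence $\ent_k(s_i) \to \ent(s_i)$ at the finite net gives a common threshold $N_0 = N_0(\eta, \varepsilon)$ such that $|\ent_k(s_i) - \ent(s_i)| < \varepsilon$ for all $k \ge N_0$ and all $i$; uniform continuity of $\ent$ (Lemma~\ref{lem_cont}) further guarantees $|\ent(s) - \ent(s_i)| < \varepsilon$ on each ball once $\eta$ is small enough.

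To pass from the net to all of $K_\eta$, I sandwich $\ent_n(s)$ by applying Lemma~\ref{lem_ent_kirsz} in two complementary orientations. With $\hat n = \lceil (1+\delta) n \rceil$, invoking the lemma with (smaller, larger) boxes $(Q_n, Q_{\hat n})$ carrying slopes $(s, s_i)$ produces
\[
\ent_{\hat n}(s_i) \le \ent_n(s) + \theta_m(\delta) + \theta_m(\tfrac{1}{n});
\]
with $n' = \lfloor n/(1+\delta) \rfloor$, invoking it with $(Q_{n'}, Q_n)$ carrying slopes $(s_i, s)$ produces
\[
\ent_n(s) \le \ent_{n'}(s_i) + \theta_m(\delta) + \theta_m(\tfrac{1}{n'}).
\]
Both $\hat n$ and $n'$ tend to infinity with $n$, so once $n$ is large enough (depending on $\eta, \varepsilon, m$) for both $n' \ge N_0$ and the lemma's boundary-fluctuation condition $n \ge (2+\delta)/\delta^2$ to hold, pointwise convergence replaces $\ent_{\hat n}(s_i)$ and $\ent_{n'}(s_i)$ by $\ent(s_i)$ up to error $\varepsilon$, and uniform continuity replaces $\ent(s_i)$ by $\ent(s)$; assembling gives $|\ent_n(s) - \ent(s)| \le 2\varepsilon + \theta_m(\delta) + \theta_m(\tfrac{1}{n})$ uniformly on $K_\eta$.

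Given any target accuracy, fixing $\eta$ and $\varepsilon$ small enough forces the bounds in both regimes to fall under that accuracy, uniformly in $s$ and for all sufficiently large $n$; this is precisely $\ent(s) = \ent_n(s) + \theta_m(\tfrac{1}{n})$. The main obstacle I expect is the one-sidedness of Lemma~\ref{lem_ent_kirsz}, which only bounds entropy on a larger box from above by entropy on a smaller one; the two-application sandwich above resolves this by swapping which of the two boxes plays the ``larger'' role between its two invocations.
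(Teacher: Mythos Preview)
Your proof is correct and follows essentially the same approach as the paper's: split into a near-boundary regime handled by Lemma~\ref{lem_ent_near_1} and a bulk regime $\{|s|_\infty \le 1-\eta\}$ handled by a finite $\rho$-net together with two applications of Lemma~\ref{lem_ent_kirsz} in opposite orientations (once with the arbitrary slope on the smaller box, once on the larger) to sandwich $\ent_n(s)$ between values of $\ent_k(s_i)$ at net points, then invoke pointwise convergence at the net and uniform continuity of $\ent$. The paper organizes the constants slightly differently (it fixes $\varepsilon$ first and then chooses $\delta$ and $\alpha$ to make the various $\theta$ terms each at most $\varepsilon/4$), but the skeleton---near-boundary estimate, compactness net, two-sided Kirszbraun sandwich---is identical.
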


\begin{proof}[Proof of Lemma~\ref{lem_unif_conv}]
Let $\varepsilon > 0$. We want to find $n_0$, depending only on $\varepsilon$
and $m$, such that $n \ge n_0$ implies $|\ent_n(s) - \ent(s)| < \varepsilon$
for any $s \in [-1,1]^m$.

By Lemma~\ref{lem_ent_near_1} there exists $\delta > 0$
such that $|s|_\infty \ge 1 - \delta$ implies $|\ent_n(s)| < \varepsilon$
for all $n$. This suffices to handle the case $|s|_\infty \ge 1 - \delta$.

For the remaining case of $|s|_\infty \le 1 - \delta$,
we rely on compactness of the space $[-1+\delta, 1-\delta]^m$.
By uniform continuity of $\ent(s)$ (see Lemma~\ref{lem_ent_kirsz},
there exists $\alpha > 0$ such that $|s_1 - s_2|_\infty \le \alpha$
implies $|\ent(s_1) - \ent(s_2)| < \tfrac{\varepsilon}{4}$.
Shrink $\delta$ if necessary so that $\tfrac{\delta^2}{1+\delta} \le \alpha$,
then shrink $\delta$ further so that the $\theta_m(\delta)$
term from~\eqref{e_ent_kirsz}, the conclusion of Lemma~\ref{lem_ent_kirsz},
is less than $\tfrac{\varepsilon}{4}$.

Choose a finite set of slopes
$s^{(1)}, \dotsc, s^{(k)} \in [-1+\delta,1-\delta]^m$
such that for every $s^\star \in [-1+\delta,1-\delta]^m$
there exists some $i=1, \dotsc, k$
with $|s^\star - s^{(i)}|_\infty \le \tfrac{\delta^2}{1+\delta}$.
Finally, choose $n_0$ large enough that
whenever $n \ge \tfrac{1}{1-\delta} n_0$,
the $\theta_m(\tfrac{1}{n})$ term from~\eqref{e_ent_kirsz}
is less than $\tfrac{\varepsilon}{4}$,
and for each $i=1, \dotsc, k$,
$|\ent_n(s^{(i)}) - \ent(s^{(i)})| < \tfrac{\varepsilon}{4}$.

Let $n \ge n_0$, let $s^\star \in [-1+\delta,1-\delta]^m$ be arbitrary,
and fix $i=1, \dotsc, k$
such that $|s^\star - s^{(i)}|_\infty < \tfrac{\delta^2}{1+\delta}$.
We apply Lemma~\ref{lem_ent_kirsz} twice.
First take $s = s^\star$, $\hat s = s^{(i)}$,
$h_{\partial Q_n} = h_{\partial Q_n}^s$,
and $h_{\partial Q_{\hat n}} = h_{\partial Q_{\hat n}}^{\hat s}$.
The conclusion is
\[
	\ent_{\hat n}(s^{(i)})
	\le \ent_n(s^\star) + \theta_m(\delta) + \theta_m(\tfrac{1}{n}) \,,
\]
and by our assumptions on $\delta$ and $n_0$ above,
the $\theta$ terms simplify to
\[
	\ent_{\hat n}(s^{(i)})
	\le \ent_n(s^\star) + \tfrac{2\varepsilon}{4} \,.
\]
By choice of $n_0$,
\[
	\ent_{\hat n}(s^{(i)}) \ge \ent(s^{(i)}) - \tfrac{\varepsilon}{4} \,,
\]
and by choice of $s^{(i)}$,
\[
	\ent(s^{(i)}) \ge \ent(s^\star) - \tfrac{\varepsilon}{4} \,.
\]
Combining the last three inequalities yields
\[
	\ent_n(s) \ge \ent(s) - \varepsilon \,.
\]

For the reverse inequality, choose $s = s^{(i)}$, $\hat s = s^\star$,
and exchange the role of $n$ and $\hat n$.
Repeating the work above, we deduce the inequality
\[
	\ent_n(s) \le \ent(s) + \varepsilon \,,
\]
which completes the proof of Lemma~\ref{lem_unif_conv}.
\end{proof}

\begin{lemma}[Robustness of $\Ent_{Q_n}$]
\label{lem_robust}
Let $n \in \N$, $\varepsilon \in (0, \tfrac{1}{27})$, and $s \in [-1,1]^m$.
Let $h_{\partial Q_n} \in M(\partial Q_n)$ be such that
\begin{equation} \label{e_robust_bndy_approx}
	\sup_{z \in \partial Q_n} \bigl| h_{\partial Q_n}(z)
		- h_{\partial Q_n}^s(z) \bigr|
	\le \varepsilon n \,.
\end{equation}

Then,
\begin{equation} \label{e_robust}
	\Ent_{Q_n} \bigl( M(Q_n, h_{\partial Q_n}) \bigr)
	= \ent(s) + \theta_m(\varepsilon)
		+ \theta_{m,\varepsilon}(\tfrac{1}{n}) \,.
\end{equation}
\end{lemma}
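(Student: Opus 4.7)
The plan is to mirror the strategy of Lemmas~\ref{lem_cont} and~\ref{lem_unif_conv}: split into the case where $s$ lies near the boundary of $[-1,1]^m$ (handled by Lemma~\ref{lem_ent_near_1}) and the case where it lies in the interior (handled by Lemma~\ref{lem_ent_kirsz} and Lemma~\ref{lem_unif_conv}). First I would pick $\delta = \delta(\varepsilon) \in (0, \tfrac{1}{3})$ such that $\varepsilon \le \tfrac{\delta^2}{2+\delta}$; for example $\delta$ a small constant multiple of $\sqrt{\varepsilon}$ works, and the hypothesis $\varepsilon < \tfrac{1}{27}$ ensures such a $\delta$ exists below $\tfrac{1}{3}$. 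Then $\theta_m(\delta) = \theta_m(\varepsilon)$, so this change of parameter is harmless.

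In the near-boundary case $|s|_\infty > 1 - 3\delta$, the perturbed boundary function $h_{\partial Q_n}$ still lies within $(\varepsilon + 3\delta) n = O(\delta) n$ of $h^s_{\partial Q_n}$, which itself lies within $O(\delta) n$ of some extremal slope-$\pm 1$ linear function (by choosing a coordinate direction realizing $|s|_\infty$). Applying Lemma~\ref{lem_ent_near_1} directly to $h_{\partial Q_n}$ gives
\[
    \Ent_{Q_n}(M(Q_n, h_{\partial Q_n})) = \theta_m(\delta) + \theta_{m,\delta}(\tfrac{1}{n}) \,.
\]
Applying the same lemma to $h^s_{\partial Q_n}$ and passing $n\to\infty$ gives $\ent(s) = \theta_m(\delta)$, so the two agree up to the desired error.

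In the interior-slope case $|s|_\infty \le 1 - 3\delta$, I would apply Lemma~\ref{lem_ent_kirsz} twice with $\hat s = s$, using the linear boundary $h^{s\cdot x}$ on one side and the perturbed boundary $h_{\partial Q_n}$ on the other. For the upper bound, set $n' := \lfloor n/(1+\delta)\rfloor$ so that $n$ equals $\lceil(1+\delta)n'\rceil$ (up to $+1$); take the smaller domain to be $Q_{n'}$ with linear boundary $h^{s\cdot x}_{\partial Q_{n'}}$ and the larger to be $Q_n$ with perturbed boundary $h_{\partial Q_n}$. The hypotheses of Lemma~\ref{lem_ent_kirsz} are satisfied: the linear boundary differs from $s\cdot z$ by at most $1 \le \varepsilon n'$, and the perturbed boundary satisfies~\eqref{e_robust_bndy_approx} by assumption. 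The lemma yields
\[
    \Ent_{Q_n}(M(Q_n, h_{\partial Q_n})) \le \ent_{n'}(s) + \theta_m(\delta) + \theta_m(\tfrac{1}{n'}) \,,
\]
and Lemma~\ref{lem_unif_conv} replaces $\ent_{n'}(s)$ by $\ent(s) + \theta_m(\tfrac{1}{n'})$. For the lower bound I reverse the roles: take $Q_n$ with perturbed boundary as the smaller domain and $Q_{\hat n}$ with linear boundary $h^{s\cdot x}_{\partial Q_{\hat n}}$ (where $\hat n = \lceil(1+\delta) n\rceil$) as the larger, giving
\[
    \ent_{\hat n}(s) \le \Ent_{Q_n}(M(Q_n, h_{\partial Q_n})) + \theta_m(\delta) + \theta_m(\tfrac{1}{n}) \,,
\]
and invoke Lemma~\ref{lem_unif_conv} again.

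The main obstacle is simply the bookkeeping of parameters: verifying that the hypotheses of Lemma~\ref{lem_ent_kirsz} continue to hold when one of the two boundaries is the \emph{perturbed} (rather than purely linear) function, and that the resulting errors $\theta_m(\delta) + \theta_{m,\delta}(\tfrac{1}{n})$ collapse under the substitution $\delta \sim \sqrt{\varepsilon}$ into the form $\theta_m(\varepsilon) + \theta_{m,\varepsilon}(\tfrac{1}{n})$ demanded by~\eqref{e_robust}. Once this is set up, the proof itself is essentially a two-line chain of inequalities in each direction, combined at the end.
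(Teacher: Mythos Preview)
Your proposal is correct and follows essentially the same approach as the paper: a case split on whether $|s|_\infty$ is close to $1$ (handled by Lemma~\ref{lem_ent_near_1}) or bounded away from $1$ (handled by two applications of Lemma~\ref{lem_ent_kirsz} with $\hat s = s$, one in each direction, followed by Lemma~\ref{lem_unif_conv}). The paper takes $\delta = \sqrt{3\varepsilon}$ and splits at $|s|_\infty = 1 - \varepsilon^{1/2}$, whereas you split at $|s|_\infty = 1 - 3\delta$; your choice is arguably cleaner since it matches the hypothesis of Lemma~\ref{lem_ent_kirsz} exactly, and you are more explicit than the paper about the two distinct auxiliary box sizes $n'$ and $\hat n$ needed for the two directions of the inequality.
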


\begin{proof}
Suppose first that $|s|_\infty \ge 1 - \varepsilon^{1/2}$.
Then Lemma~\ref{lem_ent_near_1} applies to both $h_{\partial Q_n}$
and $h_{\partial Q_n}^s$, so
\[ \begin{aligned}
	\Ent_{Q_n} \bigl( M(Q_n, h_{\partial Q_n}) \bigr)
	&= 0 + \theta(\varepsilon) + \theta_\varepsilon(\tfrac{1}{n}) \\
	&= \ent_n(s) + \theta(\varepsilon) + \theta_\varepsilon(\tfrac{1}{n}) \\
	&= \ent(s) + \theta_m(\varepsilon)
		+ \theta_{m,\varepsilon}(\tfrac{1}{n}) \,,
\end{aligned} \]
where in the last line we used Lemma~\ref{lem_unif_conv} for uniform convergence
of $\ent_n(s) \to \ent(s)$, dependent only on dimension $m$.

So suppose instead that $|s|_\infty \le 1 - \varepsilon^{1/2}$.
Apply Lemma~\ref{lem_ent_kirsz} twice.
Both times take $\hat s = s$ and $\delta = \sqrt{3\varepsilon}$.
Note that then
$\varepsilon = \tfrac{\delta^2}{3} \le \tfrac{\delta}{2+\delta}$,
as required by Lemma~\ref{lem_ent_kirsz}.
In the first application of Lemma~\ref{lem_ent_kirsz}
take $h_{\partial Q_n} = h_{\partial Q_n}$
and $h_{\partial Q_{\hat n}} = h_{\partial Q_{\hat n}}^s$
so that
\[
	\ent_{\hat n}(s)
	\le \Ent_{Q_n} \bigl( M(Q_n, h_{\partial Q_n}) \bigr)
		+ \theta_m(\delta) + \theta_m(\tfrac{1}{n}) \,.
\]

The second time exchange $n$ and $\hat n$ to derive
\[
	\Ent_{Q_n} \bigl( M(Q_n, h_{\partial Q_n}) \bigr)
	\le \ent_{\hat n}(s) +\theta_m(\delta) + \theta_m(\tfrac{1}{n}) \,.
\]

Since $\delta$ is determined by $\varepsilon$,
we may replace $\delta$ by $\varepsilon$ in the $\theta$ terms above.
And as before, Lemma~\ref{lem_unif_conv} implies that
$\ent_{\hat n}(s) \to \ent(s)$ and $\ent_n(s) \to \ent(s)$ as $n \to \infty$,
at a rate depending only on the dimension and on $\delta$
(since $n$, $\hat n$ differ from $n$ by a factor of $(1+\delta)^{\pm 1}$).
Therefore
\[
	\Ent_{Q_n} \bigl( M(Q_n, h_{\partial Q_n}) \bigr)
	= \ent(s) + \theta_m(\varepsilon) + \theta_{m,\varepsilon}(\tfrac{1}{n})
\]
as claimed.
\end{proof}

\section{Profile theorem for piecewise affine functions} \label{s_simple}

In this section we prove a simpler version of the profile theorem,
restricted to the case where the domain $R$ is a finite union of simplices
and where the asymptotic height function $h_R$ is piecewise affine,
that is, affine when restricted to a single simplex.
On one hand this case is simple enough that we can prove the profile theorem
directly via over-\@ and under-counting arguments
(see the proof of Theorem~\ref{thm_complex} below).
On the other hand, this case is sufficiently powerful
to approximate general domains and height functions very well
(see the proof of Theorem~\ref{th_profile}
and especially Lemma~\ref{lem_approx_tri}).

We must impose some regularity assumption on the simplices chosen;
in particular we need the isoperimetric ratio to be bounded above
(that is, the surface area of a simplex must not be too large
in comparison to its volume).
For simplicity we restrict our attention to certain families of simplices.
Now let us introduce a standard notation describing these simplices.

In our exposition we follow~\cite{She05}.
For a point $w = (w_1, \dotsc, w_m) \in \R^m$,
we recall from our list of notations that
$\floor{w} := (\floor{w_1}, \dotsc, \floor{w_m}) \in \Z^m$.
For a typical point $w \in \R$, let $s(w)$
denote the permutation of $\{1, \dotsc, m\}$
which rank-orders the components of $w - \floor{w}$.
In particular,
\[
	w_{s(1)} - \floor{w_{s(1)}}
	> w_{s(2)} - \floor{w_{s(2)}}
	> \dotsb
	> w_{s(m)} - \floor{w_{s(m)}} \,.
\]
For example, consider the point $w = (1.1, -0.5, 2.3) \in \R^3$. Then
\[
	\floor w = (1, -1, 2)
	\quad \text{and} \quad
	w - \floor w = (0.1, 0.5, 0.3) \,.
\]
Since the first largest coordinate in $w - \floor w$ is at index $2$,
the second largest coordinate is at index $3$,
and the third largest (i.e.~the smallest) is at index $1$,
we have $s(w) = (2 \; 3 \; 1)$.

\begin{definition}[Simplices of scale $1$] \label{d_simpl_one}
Let $S_m$ denote the group of permutations on $\{1, \dotsc, m\}$.
For $v \in \Z^m$ and $s \in S_m$, we define $C(v,s)$
to be the closure of the set
\begin{equation} \label{e_c_v_s}
	\bigl\{ w \in \R^m \,\big|\, \floor{w} = v, \, s(w) = s \bigr\} \,.
\end{equation}
\end{definition}

A few members of the family $\{C(v,s) \,|\, v \in \Z^m, \, s \in S_m\}$
are illustrated in Figure~\ref{f_c_v_s} in the case of dimension $m=2$.
It is an elementary observation that the $m!$ simplices
$\{C(0, s) \,|\, s \in S_m\}$
tile the hypercube $[0,1]^m$,
i.e.~$\bigcup_{s \in S_m} C(0,s) = [0,1]^m$,
and any two simplices from $\{C(0,s) \,|\, s \in S_m\}$ only have at an
$(m-1)$-dimensional intersection.
It follows that, the infinite family $\{C(v,s) \,|\, v \in \Z^m, s \in S_m\}$
tiles $\R^m$.

Moreover, any two simplices $C(v_1, s_1)$ and $C(v_2, s_2)$ are isometric.
That is, there exists a distance-preserving bijection $f: \R^m \to \R^m$
such that $f(C(v_1, s_1)) = C(v_2, s_2)$.
This ensures that all the simplices $C(v,s)$ have the same isoperimetric ratio.
For our purposes we will also make reference to rescaled simplices.

\begin{definition}[Simplices of scale $\ell$] \label{d_simpl_ell}
For $\ell > 0$, $v \in \Z^m$, and $s \in S_m$,
we write
\[
	\ell C(v, s) := \{ \ell x \,|\, x \in C(v,s) \}
\]
for scaled copy of the simplex $C(v, s)$, scaled out from the origin.
\end{definition}

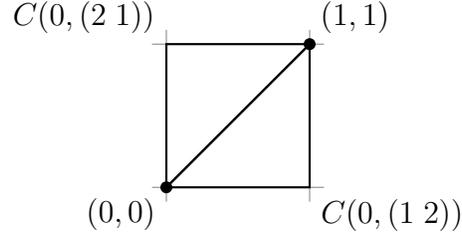
\begin{figure} \centering
\begin{tikzpicture}[x=0.75in,y=0.75in]
	\draw[step=1, help lines] (-0.1,-0.1) grid (1.1, 1.1);
	\fill (0,0) circle[radius=1in/32] (1,1);
	\draw (0,0) node [below left] {$(0,0)$};
	\fill (1,1) circle[radius=1in/32] (1,1);
	\draw (1,1) node [above right] {$(1,1)$};
	\draw [thick] (0,0) -- (1,1)
		-- (0,1) node [above left] {$C(0,(2 \; 1))$} -- cycle;
	\draw [thick] (0,0) -- (1,1)
		-- (1,0) node [below right] {$C(0, (1 \; 2))$} -- cycle;
\end{tikzpicture}
\caption{The two simplices in dimension $2$ that tile the unit square.
The simplex $C(0, (1 \; 2))$ is the closure of
the set of points $(x,y) \in [0,1]^2$ such that $x > y$,
and $C(0, (2 \; 1))$ is the closure of the points with $y > x$.
The other simplices $\{C(v,s) \,|\, v \in \Z^m, s \in S_2\}$
are translates of these two simplices.
}
\label{f_c_v_s}
\end{figure}

\begin{figure} \centering
\newcommand\scale{1}
\newcommand\sep{0.1}
\newcommand*\zero{0.05}
\newcommand*\one{0.95}
\newcommand\simplex[5][]{
	\path[#1] #2 -- #3 -- #4 -- cycle;
	\path[#1] #2 -- #3 -- #5 -- cycle;
	\path[#1] #2 -- #4 -- #5 -- cycle;
	\path[#1] #3 -- #4 -- #5 -- cycle;
}
\begin{tikzpicture}[
		x={(\scale*1in, \scale*cos(30)*1in)},
		y={(-\scale*1in,\scale*cos(30)*1in)},
		z={(0in,\scale*1in)}]
	\begin{scope}[shift={(0, \sep, -2*\sep)}]
		\simplex[draw=OliveGreen,fill=OliveGreen!25,fill opacity=0.25]
			{ (\zero,\zero,\zero) }{ (\zero,\one ,\zero) }
			{ (\one ,\one ,\zero) }{ (\one ,\one ,\one ) };
	\end{scope}
	\begin{scope}[shift={(\sep,0,-2*\sep)}]
		\simplex[draw=red,fill=red!25,fill opacity=0.25]
			{ (\zero,\zero,\zero) }{ (\one ,\zero,\zero) }
			{ (\one ,\one ,\zero) }{ (\one ,\one ,\one ) };
	\end{scope}
	\begin{scope}[shift={(\sep,-\sep,0)}]
		\simplex[draw=blue,fill=blue!25,fill opacity=0.25]
			{ (\zero,\zero,\zero) }{ (\one ,\zero,\zero) }
			{ (\one ,\zero,\one ) }{ (\one ,\one ,\one ) };
	\end{scope}
	\begin{scope}[shift={(-\sep,\sep,0)}]
		\simplex[draw=blue,fill=blue!25,fill opacity=0.25]
			{ (\zero,\zero,\zero) }{ (\zero,\one ,\zero) }
		{ (\zero,\one ,\one ) }{ (\one ,\one ,\one ) };
	\end{scope}
	\begin{scope}[shift={(0,-\sep,2*\sep)}]
		\simplex[draw=OliveGreen,fill=OliveGreen!25,fill opacity=0.25]
			{ (\zero,\zero,\zero) }{ (\zero,\zero,\one ) }
		{ (\one ,\zero,\one ) }{ (\one ,\one ,\one ) };
	\end{scope}
	\begin{scope}[shift={(-\sep,0,2*\sep)}]
		\simplex[draw=red,fill=red!25,fill opacity=0.25]
			{ (\zero,\zero,\zero) }{ (\zero,\zero,\one ) }
		{ (\zero,\one ,\one ) }{ (\one ,\one ,\one ) };
	\end{scope}
\end{tikzpicture}
\caption{Decomposition of a unit cube into $\{C(0,s) \,|\, s \in S_3\}$.
The simplices have been separated for a more clear figure.}
\label{f_c_v_s_3d}
\end{figure}
As before, we observe that for any $\ell > 0$,
the family $\{\ell C(0,s) \,|\, s \in S_m\}$ tiles the hypercube $[0,\ell]^m$.
Therefore again, $\{ \ell C(v,s) \,|\, v \in \Z^m, \, s \in S_m\}$
tiles $\R^m$.
To approximate a general domain $R$ that satisfies Assumption~\ref{a_domain},
we consider domains which are the union of simplices.

\begin{definition}[Simplex domain of scale $\ell$] \label{d_simpl_dom}
For $\ell > 0$, a \defn{simplex domain of scale $\ell$}
is a region $K \subset \R^m$ that is the union of finitely many simplices
of scale $\ell$.
We further require that simplex domains be connected,
so that a simplex domain $K$ automatically meets the requirements
from Assumption~\ref{a_domain}.
\end{definition}

For example, the union of the two simplices in Figure~\ref{f_c_v_s}
is a simplex domain of scale $1$.
It is clear that simplex domains can approximate more general
domains $R \subset \R^m$;
we make this observation more precise in Lemma~\ref{lem_approx_tri} below.
Now, let us formulate the main result of this section,
the simplicial profile theorem (Theorem~\ref{thm_complex}).
It is a special case of the profile theorem
for simplex domains and piecewise affine height functions;
cf.~the general profile theorem (Theorem~\ref{th_profile}).

\begin{theorem}[Simplicial profile theorem] \label{thm_complex}
Let $K = \Delta_1 \cup \dotsb \cup \Delta_k$ be a simplex domain
of scale $\ell$, in the sense of Definition~\ref{d_simpl_dom}.
Fix a height function $h_K \in M(K)$
such that each restriction $h_K|_{\Delta_j}$,
$j = 1, \dotsc, r$, is affine.
Let $\varepsilon > 0$, let $n \in \N$,
and let $K_n := \{z \in \Z^m \, | \, \frac{1}{n} z \in K\}$.
Then for any slope $s \in [-1,1]^m$,
\begin{equation} \label{e_complex_profile} \begin{aligned}
	\hskip3em&\hskip-3em
	\Ent_{K_n} \bigl( B(K_n, h_K, \varepsilon \ell) \bigr) \\
	&= \frac{1}{|K|} \int_K \ent(\nabla h_K(x)) \, dx
	+ \theta_m(\varepsilon) + \theta_{m,\varepsilon, \ell}(\tfrac{1}{n}) \,.
\end{aligned} \end{equation}
\end{theorem}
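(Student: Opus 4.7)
Since $h_K$ is affine on each simplex, denote the constant gradient by $s_j := \nabla h_K|_{\Delta_j} \in [-1,1]^m$ and write $h_K|_{\Delta_j}(x) = s_j \cdot x + b_j$, so the right-hand side of~\eqref{e_complex_profile} reduces to $\frac{1}{|K|}\sum_{j=1}^{r} |\Delta_j|\,\ent(s_j)$. The plan is a mesoscopic block argument: introduce an intermediate scale $k \in \N$ with $1 \ll k \ll n$, chosen eventually as a function of $\varepsilon$ (and $m$, $\ell$). For each $j$, tile the interior of $n\Delta_j$ by disjoint discrete hypercubes of side $k$, excluding a buffer of macroscopic thickness $\sim \sqrt{\varepsilon}\,\ell$ near $\partial \Delta_j$ so that every retained box lies entirely in a single $n\Delta_j$. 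The buffer has Lebesgue measure at most $\theta(\sqrt{\varepsilon})\,|K_n|$ and the union of all box boundaries has cardinality at most $\theta_m(1/k)\,|K_n|$; both contribute corrections that will be absorbed into $\theta_m(\varepsilon)$ once $k = k(\varepsilon)$ is appropriately chosen.

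For the upper bound on $\Ent_{K_n}(B(K_n, h_K, \varepsilon \ell))$, I undercount. First use the Kirszbraun theorem (Theorem~\ref{th_kirszbraun}) to build a single ``base'' height function $h^{\mathrm{base}}_{K_n}$ satisfying $\sup_{z \in K_n}|\tfrac{1}{n} h^{\mathrm{base}}_{K_n}(z) - h_K(\tfrac{1}{n}z)| = O(1/n)$; feasibility of the Kirszbraun hypothesis follows from $h_K$ being $1$-Lipschitz, once one rounds to the correct parity. On each box $Q$ of the tiling, independently replace $h^{\mathrm{base}}_{K_n}|_Q$ by an arbitrary element of $M(Q, h^{\mathrm{base}}_{K_n}|_{\partial Q})$. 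Since $k/n \ll \varepsilon \ell$, every modified function remains in $B(K_n, h_K, \varepsilon\ell)$; since the boxes are disjoint, all modifications are independent. Because $h^{\mathrm{base}}_{K_n}|_{\partial Q}$ differs from the canonical affine boundary $h^{s_j \cdot x + n b_j}_{\partial Q}$ by only $O(1)$, Lemma~\ref{lem_robust} gives $|M(Q, h^{\mathrm{base}}_{K_n}|_{\partial Q})| \ge \exp\bigl(-k^m \ent(s_j) - k^m o_k(1)\bigr)$, where $o_k(1) \to 0$ as $k \to \infty$ and $n/k \to \infty$. Multiplying over all boxes and dividing by $|K_n|$ yields the desired upper bound.

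For the matching lower bound, I overcount. Any $h_{K_n} \in B(K_n, h_K, \varepsilon\ell)$ is determined by its values on the ``skeleton'' $\mathcal{S}$, defined as the union of all box boundaries together with the buffer region, and by its restriction to the interior of each box. Since $|\mathcal{S}| \le (\theta_m(1/k) + \theta(\sqrt{\varepsilon}))\,|K_n|$ and each value on $\mathcal{S}$ can be one of at most two integers once its neighbors are fixed, the number of admissible skeleton configurations is at most $2^{|\mathcal{S}|}$. For any fixed skeleton configuration, the hypothesis $h_{K_n} \in B(K_n, h_K, \varepsilon\ell)$ together with the affineness of $h_K$ on each $\Delta_j$ forces $h_{K_n}|_{\partial Q}$ to lie within $\varepsilon\ell n$ of $h^{s_j \cdot x + nb_j}_{\partial Q}$, so Lemma~\ref{lem_robust} (with approximation parameter $\varepsilon\ell/k$) bounds the interior extensions by $|M(Q, h_{K_n}|_{\partial Q})| \le \exp\bigl(-k^m \ent(s_j) + k^m \theta_m(\varepsilon\ell/k) + k^m \theta_{m,\varepsilon\ell/k}(1/k)\bigr)$ per box. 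Multiplying the per-box bounds, summing over skeleton configurations, and taking $-\frac{1}{|K_n|}\log$ gives the matching lower bound after collecting errors.

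The main obstacle is reconciling the competing demands on $k$ in the overcounting step: $k$ must be large enough that the skeleton cost $\theta_m(1/k)$ and the robustness error $\theta_m(\varepsilon\ell/k)$ are both much smaller than $\varepsilon$, yet $k/n$ must remain much smaller than $\varepsilon\ell$ so that the tiling fits within the buffer. The quantitative form of Lemma~\ref{lem_robust}, together with the boundedness of $\ent$ from Observation~\ref{obs_surf_tens_bdd}, is exactly what allows these constraints to be met: one first chooses $k = k(m, \varepsilon, \ell)$ large enough to make all $\theta_m$ terms $\le \varepsilon$, then takes $n$ large enough to make all $\theta_{m,\varepsilon,\ell}(1/n)$ terms vanish. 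A secondary technical point is verifying that the buffer region, which is not controlled by any per-box entropy, contributes at most $\theta(\sqrt{\varepsilon})\cdot\log 2$ in either direction; this is immediate from its Lebesgue measure bound combined with the trivial estimate $|M(R_n)| \le 2^{|R_n|}$ used in Observation~\ref{obs_ent_bdd}.
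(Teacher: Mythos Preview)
Your overall architecture (block decomposition, undercounting by independent box fillings, overcounting via a skeleton plus Lemma~\ref{lem_robust} on each box) matches the paper's exactly. However, there is a genuine gap in your scale bookkeeping that makes the overcounting step fail as written.

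You take the block side $k$ to be a fixed lattice integer, chosen once as a function of $(m,\varepsilon,\ell)$ and then held constant as $n\to\infty$. In the overcounting step, membership in $B(K_n,h_K,\varepsilon\ell)$ only pins $h_{K_n}(z)$ to within $\varepsilon\ell n$ (not $\varepsilon\ell$) of the affine value $s_j\cdot z + nb_j$. Hence the approximation parameter you must feed into Lemma~\ref{lem_robust} on a box of side $k$ is $\varepsilon\ell n/k$, not the $\varepsilon\ell/k$ you wrote; you dropped a factor of $n$. With $k$ fixed, $\varepsilon\ell n/k\to\infty$ and the lemma gives nothing. Conversely, your undercounting step genuinely needs $k/n\ll\varepsilon\ell$ (so that arbitrary fillings stay in the ball), i.e.\ $k\ll\varepsilon\ell n$. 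The two requirements $k\gg\varepsilon\ell n$ and $k\ll\varepsilon\ell n$ are incompatible, so no single block scale can serve both directions.

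This is precisely why the paper uses \emph{two} different macroscopic box scales $q$ (so lattice side $qn$, growing with $n$): $q=\tfrac14\varepsilon\ell$ for the undercounting, which keeps the Lipschitz excursion inside each box below $\varepsilon\ell$; and $q=\varepsilon^{1/2}\ell$ for the overcounting, which makes the relative boundary fluctuation $\varepsilon\ell n/(qn)=\varepsilon^{1/2}$ small enough for Lemma~\ref{lem_robust}. Once you let the box side scale with $n$ and decouple the two directions, the rest of your argument goes through; your $\sqrt\varepsilon$-buffer is then unnecessary, since at the overcounting scale the uncovered boundary layer already has thickness $\sim q=\varepsilon^{1/2}\ell$ automatically.
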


\begin{remark} \label{rem_complex_profile}
In reading the proof of Theorem~\ref{thm_complex} for the first time,
we encourage the reader to consider only a single simplex $\Delta$
rather than a simplex domain $K = \Delta_1 \cup \dotsb \cup \Delta_k$.
The key ideas are more clear when thinking about a single simplex.
In particular the simplex is decomposed into hypercubes two times,
using hypercubes of a different scale each time.
The two scales of hypercubes are illustrated in Figure~\ref{f_decomp_tri}.
One decomposition is used to overestimate the microscopic entropy
by undercounting the set $B(K_n, h_K, \varepsilon \ell)$.
The other is used to underestimate the entropy by overcounting the set.

In the more general case of a simplex domain
we still decompose twice,
using hypercubes of a different size each time.
A typical decomposition is illustrated in Figure~\ref{f_decomp_tris}.
In particular we keep only those hypercubes that lie inside a single simplex,
so that $h_K$ has a single, well-defined slope on each $Q_i$.
Both sides of~\eqref{e_complex_profile} are approximately additive over the
simplices, but we will not explicitly prove this result here,
nor do we rely on it.
\end{remark}

\begin{figure} \centering
\begin{subfigure}{0.4\textwidth}
\begin{tikzpicture}
	\newcommand*\width{3.1}
	\newcommand*\num{20.5}
	\pgfmathparse{\width/\num}
	\let\step\pgfmathresult
	\begin{scope}
		\clip (0,0) -- (\width,0) -- (0,\width) -- cycle;
		\fill[lightgray] (0,0) rectangle (\width,\width);
		\foreach \j in {0,...,\num}
			\fill[white] (\step*\j,{\step*(floor(\num)-\j-1)})
				rectangle +(\step,2*\step);
		\draw[step=\step,very thin] (0,0) grid (\width,\width);
	\end{scope}
	\draw[thick] (0,0) -- (\width,0) -- (0,\width) -- cycle;
	\draw[very thin,decorate,decoration=brace]
		({\step*(floor(\num/2)},-0.05) --
		node[below=2pt]
			{$q$\makebox[0pt][l]{${} \approx \varepsilon \ell$}}
		+(-\step,0);
	\draw[very thin,decorate,decoration=brace]
		(-0.05,0) -- node[left=2pt] {$\ell$} +(0,\width);
\end{tikzpicture}
\end{subfigure}\qquad%
\begin{subfigure}{0.4\textwidth}
\begin{tikzpicture}
	\newcommand*\width{3.1}
	\newcommand*\num{10.5}
	\pgfmathparse{\width/\num}
	\let\step\pgfmathresult
	\begin{scope}
		\clip (0,0) -- (\width,0) -- (0,\width) -- cycle;
		\fill[lightgray] (0,0) rectangle (\width,\width);
		\foreach \j in {0,...,\num}
			\fill[white] (\step*\j,{\step*(floor(\num)-\j-1)})
				rectangle +(\step,2*\step);
		\draw[step=\step,very thin] (0,0) grid (\width,\width);
	\end{scope}
	\draw[thick] (0,0) -- (\width,0) -- (0,\width) -- cycle;
	\draw[very thin,decorate,decoration=brace]
		({\step*(floor(\num/2)},-0.05) --
		node[below=2pt] {$q$\makebox[0pt][l]{%
			${} \approx \varepsilon^{1/2} \ell$}}
		+(-\step,0);
	\draw[very thin,decorate,decoration=brace]
		(-0.05,0) -- node[left=2pt] {$\ell$} +(0,\width);
\end{tikzpicture}
\end{subfigure}
\caption{
	Decomposition of a single simplex into hypercubes at two scales.
	In both images, the shaded squares
	are the $Q_i$ from the proof of Theorem~\ref{thm_complex}.
	The smaller squares on the left are used when undercounting the set
	$B(K_n, h_K, \varepsilon \ell)$
	and the larger squares on the right are used when overcounting this set.
}
\label{f_decomp_tri}
\end{figure}
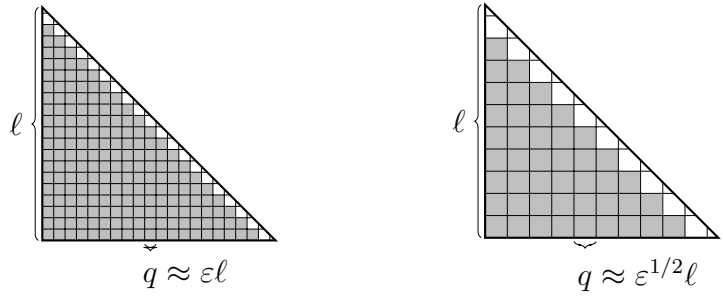

\begin{figure}
\centering
\input{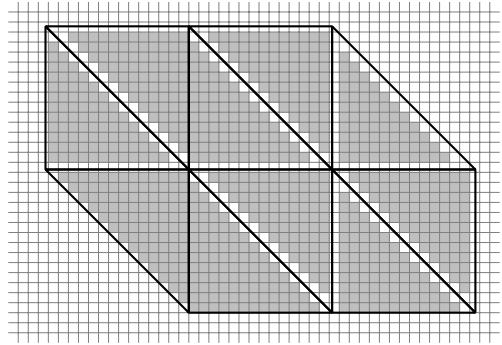}
\caption{
	In Theorem~\ref{thm_complex}
	$K$ may be a simplex domain rather than a single simplex.
	Then only hypercubes that lie inside one of the simplices
	are part of the collection $\{Q_i\}$.
	These hypercubes are shaded.
	Note that there are still two different scales of hypercubes used,
	as illustrated in Figure~\ref{f_decomp_tri},
	but only one scale is shown above.
	The set $G_n$ from later in the proof is the set of grid lines contained
	inside the simplex domain, and $U_n$ is the unshaded part of the simplex
	domain. $S_n$ is the union of $G_n$ and $U_n$.
}
\label{f_decomp_tris}
\end{figure}

\begin{proof}
As mentioned in Remark~\ref{rem_complex_profile},
we subdivide the region $K \subset \R^m$
into hypercubes $Q_i$ (for $i=1, \dotsc, k$) of equal side length $q$.
Two different values for the side length parameter $q$ are used at different
times.
The cubes $Q_i$ lie in a grid with their corners
on the rescaled lattice $q \Z^m \subset \R^m$.
The set $\{Q_1, \dotsc, Q_k\}$ enumerates all such hypercubes
that lie inside exactly one of the simplices $\Delta_1,\dotsc, \Delta_r$,
as illustrated in Figure~\ref{f_decomp_tris}.
This ensures that $h_K$ has constant derivative on $Q_i$,
which makes later arguments simpler.
For $i=1, \dotsc, r$ we choose $s_i \in [-1,1]^m$ and $b_i \in \R$
so that $h_K|_{Q,i} = h_{Q_i}^{s_i \cdot x + b_i}$.
Specifically, this means that
\[
	s_i := \nabla h_K(x_i)
	\quad \text{and} \quad
	b_i := h_K(x_i) - s_i \cdot x_i \,,
\]
for an arbitrarily chosen sample point $x_i$ from the interior of $Q_i$.

The hypercubes $Q_i$ induce a decomposition of the discrete set $K_n$
into subsets $Q_{i,n} := \{z \in \Z^m \, | \, \frac{1}{n}z \in Q_i\}$,
plus a negligible remainder $K_n \setminus \bigcup_{i=1}^r Q_{i,n}$.
This remainder is the unshaded part inside the triangles in
Figure~\ref{f_decomp_tris}.
We write $qn$ for the side length of the discrete hypercubes $Q_{i,n}$.
Technically, each $Q_{i,n}$ has an integer side length $q_{i,n} \in \Z$
that is equal to either $\floor{qn}$ or $\ceil{qn}$,
but for simplicity we elide this detail in the rest of the proof.

\medskip

Let us first sketch the main idea of the proof.
We start with the integral on the right-hand side of~\eqref{e_complex_profile}.
Since $h_K$ is piecewise affine,
the integral reduces to a finite sum
\begin{equation} \label{e_complex_profile_int} \begin{aligned}
	\frac{1}{|K|} \int_K \ent(\nabla h_K(x)) \, dx
	&= \sum_{i=1}^r \frac{1}{|K|} \int_{Q_i}
		\ent(\nabla h_K|_{Q_i}(x)) \, dx
	+ \theta_m(\tfrac{q}{\ell}) \\
	&= \sum_{i=1}^r \frac{|Q_i|}{|K|} \ent(s_i)
	+ \theta_m(\tfrac{q}{\ell}) \\
	&= \frac{1}{r} \sum_{i=1}^r \ent(s_i) + \theta_m(\tfrac{q}{\ell}) \,,
\end{aligned} \end{equation}
where we recall that $s_i = \nabla h_K(x_i)$ for $x_i \in Q_i$.
Both of the two values for the hypercube side length parameter $q$
are chosen so that $\theta_m(\tfrac{q}{\ell}) = \theta_m(\varepsilon)$.
The $\theta_m(\tfrac{q}{\ell})$ errors arise from the uncovered region
$K \setminus \bigcup_{i=1}^r Q_i$,
i.e.~the unshaded parts of the simplex domain in Figure~\ref{f_decomp_tris}.
Indeed, one simply compares the measure $|\Delta_j| = \frac{1}{m!} \ell^m$
against that of the smaller simplex $\Delta_j'$
with sides moved $\sqrt{m} q$ units inwards.
Any hypercube $Q_i$ that intersects $\Delta_j'$ must lie inside $\Delta_j$.
The $\theta_m(\tfrac{q}{\ell})$ error bound follows.

Now, we turn to the left-hand side of~\eqref{e_complex_profile}.
Our goal is to relate $\Ent_{K_n}(B(K_n, h_K, \varepsilon \ell))$
to the sum at the right-hand side of~\eqref{e_complex_profile_int}.
Towards this end, we will under- and over-count the set of height functions
$B(K_n, h_K, \varepsilon \ell)$,
in order to derive the over- and under-estimates
\begin{equation} \label{e_complex_profile_overest}
	\Ent_{K_n} \bigl( B(K_n, h_K, \varepsilon \ell) \bigr)
	\le \frac{1}{r} \sum_{i=1}^r \ent(s_i)
		+ \theta_m(\varepsilon)
		+ \theta_{m,\varepsilon,\ell}(\tfrac{1}{n})
\end{equation}
and
\begin{equation} \label{e_complex_profile_underest}
	\Ent_{K_n} \bigl( B(K_n, h_K, \varepsilon \ell) \bigr)
	\ge \frac{1}{r} \sum_{i=1}^r \ent(s_i)
		+ \theta_m(\varepsilon)
		+ \theta_{m,\varepsilon,\ell}(\tfrac{1}{n}) \,.
\end{equation}

Equations~\eqref{e_complex_profile_int},~\eqref{e_complex_profile_overest},
and~\eqref{e_complex_profile_underest},
together with the observation made above that
$\theta_m(\tfrac{q}{\ell}) = \theta_m(\varepsilon)$,
suffice to prove the theorem.
In order to prove~\eqref{e_complex_profile_overest},
we will undercount height functions in $B(K_n, h_K, \varepsilon \ell)$.
We choose $q \approx \varepsilon \ell$ and consider only height functions
that agree with $h_{K_n}$ on each boundary $\partial Q_{i,n}$.
These boundary data, together with the small size of $Q_{i,n}$,
ensure that $h_{K_n}$ satisfies the $\ell^\infty$ condition
for membership in $B(K_n, h_K, \varepsilon \ell)$.
Then, to prove~\eqref{e_complex_profile_underest},
we overcount height functions.
We choose a larger value $q \approx \varepsilon^{1/2} \ell$.
Any height function $h_n \in B(K_n, h_K, \varepsilon \ell)$,
when restricted to one of the boundary sets $\partial Q_{i,n}$
and rescaled appropriately,
fluctuates away from $h_K$ by at most
$\varepsilon \ell n = \varepsilon^{1/2} q n$.
When $\varepsilon$ is small, this allows us to compare the entropy on $Q_i$
to $\Ent(M(Q_{i,n}, h_{\partial Q_{i,n}}^{s_i})) = \ent_{qn}({s_i})$,
where $s_i = \nabla h_K(x_i)$ for $x_i \in Q_i$.

\medskip

Now, let us describe the undercounting argument in detail.
We seek to derive~\eqref{e_complex_profile_overest},
an overestimate of $\Ent_{K_n}(B(K_n, h_K, \varepsilon \ell))$,
by undercounting the set $B(K_n, h_K, \varepsilon \ell)$.
We take the side length of the hypercubes $\{Q_i\}$
to be $q = \frac{1}{4} \varepsilon \ell$.

We define an injection from the product set
$\prod_i M(Q_{i,n}, h_{\partial Q_{i,n}}^{s_i \cdot x + b_i})$
into the ball $B(K_n, h_K, \varepsilon \ell)$
in the natural way:
given a tuple of height functions
$h_{Q_{i,n}} \in M(Q_{i,n}, h_{\partial Q_{i,n}}^{s_i \cdot x + b_i})$
for $i=1, \dotsc, r$,
we define $h_{K_n}(z) := h_{Q_{i,n}}(z)$ if $z \in Q_{i,n}$.
For $z \in K_n \setminus \bigcup_{i=1}^r Q_{i,n}$,
if $\tfrac{1}{n} z \in \Delta_i$
we set $h_{K_n}(z) = h_{\{z\}}^{s_i \cdot x + b_i}(z)$.
It follows from Lemma~\ref{lem_canon_hf_ineq}
that this function $h_{K_n}$ is a height function,

Let us check that $h_{K_n} \in B(K_n, h_K, \varepsilon \ell)$.
For $z \in K_n \setminus \bigcup_{i=1}^r Q_{i,n}$,
the estimate
$|\tfrac{1}{n} h_{K_n}(z) - h_K(\tfrac{1}{n}z)| \le \varepsilon \ell$
is immediate from the definition of $h_{K_n}(z)$.
So, suppose that $z \in Q_{i,n}$ for some $i=1, \dotsc, r$,
and let $z' \in Q_{i,n}$ be a boundary point in $\partial Q_{i,n}$
that minimizes $\ell^1$ distance from $z$.
In particular, $|z - z'|_1 \le qn$, so
\[ \begin{aligned}
	\hskip3em&\hskip-3em
	\Bigl| \frac{1}{n} h_{K_n}(z)
		- h_K(\tfrac{1}{n} z) \Bigr|
	= \Bigl| \frac{1}{n} h_{Q_{i,n}}(z) - h_{Q_i}(\tfrac{1}{n} z) \Bigr| \\
	&\le \Bigl| \frac{1}{n} h_{Q_{i,n}}(z)
	- \frac{1}{n} h_{Q_{i,n}}(z') \Bigr|
	+ \Bigl| \frac{1}{n} h_{Q_{i,n}}(z') - h_{Q_i}(\tfrac{1}{n} z') \Bigr|
	\\
	&\qquad + \Bigl| h_{Q_i}(\tfrac{1}{n} z')
		- h_{Q_i}(\tfrac{1}{n} z) \Bigr| \\
	&\le \frac{1}{n} |z-z'|_1 + \frac{1}{n} + \frac{1}{n} |z-z'|_1 \\
	&\le 3 q < \varepsilon \ell \,,
\end{aligned} \]
at least for $n$ large enough that
$\tfrac{1}{n} \le q = \tfrac{1}{4} \varepsilon \ell$.
Therefore $h_{K_n} \in B(K_n, h_K, \varepsilon \ell)$ as desired.
Thus
\[
	\Bigl| \prod_{i=1}^r
		M(Q_{i,n}, h_{\partial Q_{i,n}}^{s_i \cdot x + b_i}) \Bigr|
	\le \bigl| B(K_n, h_K, \varepsilon \ell) \bigr| \,,
\]
so
\begin{equation} \label{e_prof_tri_overest}
	\sum_{i=1}^r \frac{|Q_{i,n}|}{|K_n|}
		\Ent_{Q_{i,n}} \bigl( M( Q_{i,n},
			h_{\partial Q_{i,n}}^{s_i \cdot x + b_i}) \bigr)
	\ge \Ent_{K_n} \bigl(
		B(K_n, h_K, \varepsilon \ell) \bigr) \,.
\end{equation}

Now,
\[
	\Ent_{Q_{i,n}} \bigl(
		M(Q_{i,n}, h_{\partial Q_{i,n}}^{s_i \cdot x + b_i}) \bigr)
	= \ent_{qn}(s_i)
\]
by translation invariance (see Observation~\ref{obs_trans_invar}).
Moreover, because $qn = \frac{1}{4} \varepsilon \ell n$,
we have $\ent_{qn}(s_i) \to \ent(s_i)$ as $n \to \infty$
at a rate dependent on $\varepsilon \ell$;
by Lemma~\ref{lem_unif_conv}, the convergence is uniform with respect to $s_i$.
In other words,
$\ent_{qn}(s_i) = \ent(s_i) + \theta_{m,\varepsilon, \ell}(\frac{1}{n})$.
Therefore, recalling~\eqref{e_prof_tri_overest}, we have
\begin{equation} \label{e_simplex_le_sum_ent}
	\Ent_{Kn} \bigl( B(K_n, h_K, \varepsilon \ell) \bigr)
	\le \sum_{i=1}^r \frac{|Q_{i,n}|}{|K_n|} \bigl(
		\ent(s_i) + \theta_{m,\varepsilon, \ell}(\tfrac{1}{n})
	\bigr) \,.
\end{equation}

Now, the difference between $\sum_{i=1}^r \frac{|Q_{i,n}|}{|K_n|}$
and $\tfrac{1}{r}$
is $\theta_m(\varepsilon) + \theta_{m,\varepsilon, \ell}(\frac{1}{n})$,
where the first error term accounts for the unshaded part of
Figure~\ref{f_decomp_tris},
and the second term is due to discretization effects.
Therefore~\eqref{e_simplex_le_sum_ent} simplifies to
\begin{equation} \label{e_prof_compl_under_sum_int}
	\Ent_{K_n} \bigl(
		B(K_n, h_K, \varepsilon \ell) \bigr)
	\le \frac{1}{r} \sum_{i=1}^r \ent(s_i)
	+ \theta_m(\varepsilon) + \theta_{m,\varepsilon, \ell}(\tfrac{1}{n}) \,,
\end{equation}
which is exactly the overestimate~\eqref{e_complex_profile_overest}.

\medskip

Now, we turn to~\eqref{e_complex_profile_underest}.
We will overcount $B(K_n, h_K, \varepsilon \ell)$
in order to underestimate the entropy
$\Ent_{K_n}(B(K_n, h_K, \varepsilon \ell))$.
We will take side length $q$ of the hypercubes $Q_i$ to be
$q = \varepsilon^{1/2} \ell$ for this part of the argument.

The basic idea is the following:
we choose a subset $S_n \subset K_n$,
and we only enforce the condition that
$|\tfrac{1}{n} h_{K_n}(x) - h_K(\tfrac{1}{n} x)| < \varepsilon \ell$
from Definition~\ref{d_ht_func_sets} on $S_n$ rather than on all of $K_n$.
$S_n$ is the complement of the (interiors of the) grid cells $Q_{i,n}$,
so for any fixed height values on $S_n$,
we can count the number of all extensions into the grid cells
using a sum of entropy over the cells.
There are many possible height values on $S_n$
that satisfy the $\varepsilon \ell$ error condition,
but ultimately not too many because $S_n$ is small (compared to $K_n$).

Let us provide more detail.
We define $S_n$ as follows.
Let $G_n$ denote the grid formed by the boundaries of $Q_{i,n}$,
i.e.~the part of the grid lines from Figure~\ref{f_decomp_tris}
that lies inside the simplex domain.
Let $U_n$ denote the points in $K_n$
that lie outside of any hypercube $Q_{i,n}$,
i.e.~the unshaded part of the simplex domain in Figure~\eqref{f_decomp_tris}.
Let $S_n := G_n \cup U_n$.
(As claimed, the complement $K_n \setminus S_n$
is the interior of the grid cells $Q_{i,n}$.)
Additionally, let $\Adm(S_n)$ denote the set
\begin{equation} \label{e_def_adm_s}
	\Adm(S_n)
	:= \bigl\{ \text{``admissible'' height functions on $S_n$} \bigr\} \,,
\end{equation}
where ``admissible'' means those height functions $h_{S_n} \in M(S_n)$
that admit an extension to a height function
in $B(K_n, h_K, \varepsilon \ell)$.

We claim that there is an injection from $B(K_n, h_K, \varepsilon \ell)$ into
\begin{equation} \label{e_complex_profile_underest_set}
	\biguplus_{h_{S_n} \in \Adm(S_n)}
		\prod_{i=1}^r M ( Q_{i,n}, h_{S_n}|_{\partial Q_{i,n}} ) \,,
\end{equation}
where ``$\biguplus$'' denotes the disjoint union
(so for distinct height functions $h_{S_n}$ and $\tilde h_{S_n}$ in $\Adm(S_n)$,
the product sets $\prod_1^r M(Q_{i,n}, h_{S_n}|_{\partial Q_{i,n}})$
and $\prod_1^r M(Q_{i,n}, \tilde h_{S_n}|_{\partial Q_{i,n}})$
are considered disjoint inside the set
from~\eqref{e_complex_profile_underest_set}).

Indeed, for any $h_{K_n} \in B(K_n, h_K, \varepsilon \ell)$,
the function $h_{S_n} := h_{K_n}|_{S_n}$ is by definition in $\Adm(S_n)$,
and $(h_{K_n}|_{Q_{i,n}})_{i=1}^r$ lies in the Cartesian product
from the right-hand side of~\eqref{e_complex_profile_underest_set}.
To see that this map is an injection, suppose that $h_{K_n}$
and $\tilde h_{K_n}$ map to the same point.
Then by definition of the (purported) injection,
$h_{K_n}|_{Q_{i,n}} = \tilde h_{K_n}|_{Q_{i,n}}$ for each hypercube $Q_{i,n}$.
Additionally, since the right-hand side
of~\eqref{e_complex_profile_underest_set} is a \emph{disjoint} union,
we have $h_{K_n}|_{S_n} = \tilde h_{K_n}|_{S_n}$.
Since $K_n = \bigcup_i Q_{i,n} \cup S_n$,
the two height functions $h_{K_n}$ and $\tilde h_{K_n}$ are identical.
Therefore, the map is an injection, and so
\[ \begin{aligned}
	\bigl| B(K_n, h_K, \varepsilon \ell) \bigr|
	&\le \sum_{h_{S_n} \in \Adm(S_n)}
		\prod_{i=1}^r \bigl|
			M(Q_{i,n}, h_{S_n}|_{\partial Q_{i,n}}) \bigr| \\
	&\le \bigl|\Adm(S_n) \bigr| \, \max_{h_{S_n} \in \Adm(S_n)}
		\prod_{i=1}^r \bigl|
			M(Q_{i,n}, h_{S_n}|_{\partial Q_{i,n}}) \bigr| \,.
\end{aligned} \]

Taking logarithms and multiplying by $-\frac{1}{|K_n|}$, we see that
\begin{equation} \label{e_complex_profile_underest_min} \begin{aligned}
	\hskip3em&\hskip-3em
	\Ent_{K_n} \bigl( B(K_n, h_K, \varepsilon \ell) \bigr) \\
	&\ge \min_{h_{S_n} \in \Adm(S_n)}
		\sum_{i=1}^r - \frac{1}{|K_n|} \log \bigl|
			M(Q_{i,n}, h_{S_n}|_{\partial Q_{i,n}}) \bigr| \\
	&\qquad - \frac{\log |\Adm(S_n)|}{|K_n|} \\
	&= \min_{h_{S_n} \in \Adm(S_n)}
		\sum_{i=1}^r \frac{|Q_{i,n}|}{|K_n|} \Ent_{Q_{i,n}} \bigl(
			M(Q_{i,n}, h_{S_n}|_{\partial Q_{i,n}}) \bigr) \\
	&\qquad - \frac{\log |\Adm(S_n)|}{|K_n|} \,.
\end{aligned} \end{equation}

Now, we are almost done.
We use three more asymptotic identities in the right-hand side
of~\eqref{e_complex_profile_underest_min}
to derive~\eqref{e_complex_profile_underest}.
First and simplest, since $
	\frac{|Q_{i,n}|}{|K_n|}
	= \frac{1}{r}
	+ \theta_m(\varepsilon)
	+ \theta_{m,\varepsilon,\ell}(\tfrac{1}{n})
$ and $\Ent_{Q_{i,n}}$ is bounded (Observation~\ref{obs_ent_bdd}),
we replace $\frac{|Q_{i,n}|}{|K_n|}$ by $\frac{1}{r}$
in~\eqref{e_complex_profile_underest_min}.

Second, we apply Lemma~\ref{lem_robust}
to replace $\Ent_{Q_{i,n}}(M(Q_{i,n}, h_{S_n}|_{\partial Q_{i,n}}))$
by $\ent(s_i)$.
We fix a height function $h_{S_n}$ that achieves the minimum,
then apply the lemma on each of the hypercubes $Q_{i,n}$.
We recall that the hypercubes have side length $qn = \varepsilon^{1/2} \ell n$.
Since $h_{S_n}$ is admissible
and since $h_K|_{Q_i} = h_{Q_i}^{s_i \cdot x + b_i}$,
$h_{S_n}$ is approximately affine, i.e.
\[
	\bigl| h_{S_n}(z) - h_{\partial Q_{i,n}}^{s_i \cdot x + b_i}(z) \bigr|
	\le \varepsilon \ell n
	= \varepsilon^{1/2} qn \,.
\]
So, Lemma~\ref{lem_robust} applies and yields
\[ \begin{aligned}
	\Ent_{Q_{i,n}} \bigl( M(Q_{i,n}, h_{S_n}|_{\partial Q_{i,n}}) \bigr)
	&= \ent(s_i) + \theta_m(\varepsilon^{1/2})
		+ \theta_{m,\varepsilon^{1/2}}(
			\tfrac{1}{\varepsilon^{1/2} \ell n}) \\
	&= \ent(s_i) + \theta_m(\varepsilon)
		+ \theta_{m,\varepsilon,\ell}(\tfrac{1}{n}) \,,
\end{aligned} \]
where the last line is just a matter of hiding the functions like
$\varepsilon^{1/2}$ inside our $\theta$-notation.
(See Section~\ref{ss_asymp} for the definition of $\theta$-notation.)

Finally, we claim that $
	\frac{1}{|K_n|} \log |\Adm(S_n)|
	= \theta_m(\varepsilon)
	+ \theta_{m,\varepsilon,\ell}(\tfrac{1}{n})
$,
where we recall that the set $\Adm(S_n)$ was defined in~\eqref{e_def_adm_s}.
To see this, fix a base point $z_0 \in S_n$.
There are at most $(2 \varepsilon n + 1)$ choices for $h_{S_n}(z_0)$,
by definition of $B(K_n, h_K, \varepsilon \ell)$.
Then, since $S_n \subset \Z^m$ is connected (in the sense of graph theory),
there are less than $2^{|S_n|}$ ways to extend $h_{S_n}$ to the rest of $S_n$.
So, we must estimate $|S_n|$.
We recall that $S_n = G_n \cup U_n$,
where $G_n$ is the grid and $U_n$ the unshaded region in
Figure~\ref{f_decomp_tris}.
Since $G_n$ grows like $n^{m-1}$ while $|K_n|$ grows like $n^m$,
we have $\frac{|G_n|}{|K_n|} = \theta_{m,\varepsilon,\ell}(\tfrac{1}{n})$.
Next, the part of $K \subset \R^m$ that lies outside of any hypercube $Q_i$,
that is,~the unshaded part of the simplex domain in Figure~\ref{f_decomp_tris},
is a $\theta_m(\varepsilon)$ fraction of the total volume of $K$.
Even with discretization errors, $
	\frac{|U_n|}{|K_n|}
	= \theta_m(\varepsilon) + \theta_{m,\varepsilon,\ell}(\tfrac{1}{n})
$.
Altogether,
\[ \begin{aligned}
	\frac{1}{|K_n|} \log |\Adm(S_n)|
	&\le \frac{1}{|K_n|} \log \biggl(
		(2\varepsilon \ell n + 1) 2^{|S_n|} \biggr) \\
	&= \frac{\log(2\varepsilon\ell n + 1)}{|K_n|}
	+ \frac{\log 2}{|K_n|} \Bigl( |G_n| +|U_n| \Bigr) \\
	&= \theta_m(\varepsilon) + \theta_{m,\varepsilon,\ell}(\tfrac{1}{n}) \,.
\end{aligned} \]

Applying the three asymptotic identities above
in~\eqref{e_complex_profile_underest_min}, we derive
\[
	\Ent_{K_n} \bigl( B(K_n, h_K, \varepsilon \ell) \bigr)
	\ge \frac{1}{r} \sum_{i=1}^r \ent(s_i)
	+ \theta_m(\varepsilon) + \theta_{m,\varepsilon,\ell}(\tfrac{1}{n}) \,,
\]
which is exactly~\eqref{e_complex_profile_underest}. This completes the proof.
\end{proof}

\section{Proof of the profile theorem} \label{s_profile}

In this section we extend Theorem~\ref{thm_complex},
the profile theorem for piecewise affine height functions on simplex domains,
to general asymptotic height theorem on general domains
(subject to Assumption~\ref{a_domain}, as always).

The proof is an approximation argument, and we will need some auxiliary results.
The most helpful is the simplicial Rademacher theorem,
which states that Lipschitz functions are well-approximated
by piecewise affine functions on a simplex domain.
The other auxiliary results are about robustness
of the microscopic and macroscopic entropies
under changes in the domain and in the asymptotic height profile.

The simplicial Rademacher theorem is a general fact about Lipschitz functions.
There is nothing particular to our setting, except for the use of our term
``asymptotic height function'' instead of ``Lipschitz function.''
Related results include~\cite{Sch14}, which extends Lemma~\ref{lem_approx_tri}
from Lipschitz functions to Sobolev functions,
but weakens the approximation somewhat and is therefore not suitable for our
purposes here.
The statement of the simplicial Rademacher theorem is
adapted from Lemma~2.2 of~\cite{CKP01},
and the proof is inspired by the proof there.

\begin{lemma}[Simplicial Rademacher theorem] \label{lem_approx_tri}
Let $R \subseteq \R^m$ be a region satisfying Assumption~\ref{a_domain},
and let $h_R \in M(R, h_{\partial R})$ be an asymptotic height function on $R$.
For any $\varepsilon > 0$
and any $\ell > 0$ sufficiently small (depending on $\varepsilon$),
we may choose a simplex domain
$K = \Delta_1 \cup \dotsb \cup \Delta_k  \subseteq R$ of scale $\ell$
(see Definition~\ref{d_simpl_dom})
and a piecewise affine asymptotic height function $h_K: K \to \R$
(that is, an asymptotic height function
such that each restriction $h_K|_{\Delta_i}: \Delta_i \to \R$ is affine)
that satisfy the following properties:
\begin{enumerate}[label={(\alph*)}]
\item \label{lem_approx_tri_volume}
	$|R \setminus K| < \varepsilon$,
	where $|\cdot|$ denotes the Lebesgue measure,
	and $d_H(K, R) < \varepsilon)$,
	where $d_H$ denotes Hausdorff metric;
\item \label{lem_approx_tri_approx}
	$\max_{x \in K} |h_K(x) - h_R(x)| < \tfrac{1}{2}\varepsilon \ell$; and
\item \label{lem_approx_tri_grad}
	on at least a $(1 - \varepsilon)$ fraction of the points in $K$
	(by Lebesgue measure), the gradients $\nabla h_K(x)$ and $\nabla h_R(x)$
	agree to within $\varepsilon$; more precisely,
	$
		\frac{1}{|K|} \lvert \{
			x \in K
			\mid |\nabla h_K(x) - \nabla h_R(x)|_2 \ge \varepsilon
		\} \rvert
		< \varepsilon
	$.
\end{enumerate}
\end{lemma}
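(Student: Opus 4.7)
The plan is to construct $K$ as the union of scale-$\ell$ simplices $\Delta\subseteq R$ whose vertex set lies in a carefully chosen large-measure ``good'' subset $E\subseteq R$, and to define $h_K$ on each such $\Delta$ by affine interpolation of $h_R$ at the vertices. The $1$-Lipschitz property of $h_K$ is then automatic: each consecutive edge $v_{i-1}v_i$ in the canonical enumeration of the vertices of a scale-$\ell$ simplex is a coordinate-axis segment of $\ell^1$-length $\ell$, so the $1$-Lipschitz property of $h_R$ forces the corresponding component of the affine gradient of $h_K|_\Delta$ to lie in $[-1,1]$; and $h_K$ matches across faces shared by adjacent simplices since both interpolate $h_R$ at the common vertices.

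The central analytic input is a quantitative, uniform version of Rademacher's theorem obtained via Egorov. By the classical Rademacher theorem, $h_R$ is differentiable a.e.~in $R$ with $|\nabla h_R|_\infty \le 1$, so for a.e.~$x \in R$ the quotient
\[
	f_\delta(x) := \sup_{y\in R,\,|y-x|_1\le\delta}
		\frac{|h_R(y)-h_R(x)-\nabla h_R(x)\cdot(y-x)|}{\delta}
\]
tends to $0$ as $\delta\downarrow 0$. Egorov's theorem then produces a closed set $E\subseteq R$, with $|R\setminus E|$ as small as desired (depending on $\varepsilon$ and $m$), together with a modulus $\tau(\delta)\downarrow 0$ such that $f_\delta|_E\le\tau(\delta)$; equivalently,
\[
	\bigl|h_R(y)-h_R(x)-\nabla h_R(x)\cdot(y-x)\bigr| \le \tau(\delta)\,\delta
\]
for every $x\in E$ and every $y\in R$ with $|y-x|_1\le\delta$. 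I expect this Egorov upgrade -- and in particular the fact that the resulting estimate is uniform over $x \in E$ and valid for arbitrary $y\in R$, not only $y\in E$ -- to be the conceptual heart of the proof, as it is what allows vertex interpolation to yield a genuinely $o(\ell)$ approximation rather than the merely $O(\ell)$ bound one gets from mollification.

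To produce many scale-$\ell$ simplices with all vertices in $E$, first approximate $E$ from inside by a finite union of closed axis-aligned boxes $\widetilde E\subseteq E$ with $|E\setminus\widetilde E|$ small (inner regularity of the Lebesgue measure). For $\ell$ much smaller than the side lengths of these boxes, a Riemann-sum argument shows that the fraction of lattice points of $\ell\Z^m\cap R$ lying outside $\widetilde E$ is $|R\setminus\widetilde E|/|R|+o(1)$ as $\ell\downarrow 0$; a union bound over the $m+1$ vertices per simplex then gives that all but an $\varepsilon$-fraction (by measure) of the scale-$\ell$ simplices inside $R$ have their entire vertex set in $E$. Let $K$ be the union of these simplices, additionally discarding the $O(\ell)$-measure layer of simplices within $\ell^1$-distance $m\ell$ of $\partial R$; this gives (a) for $\ell$ small.

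For each admitted $\Delta$ with vertices $v_0,\dots,v_m\in E$, define $h_K|_\Delta$ by $h_K(v_i)=h_R(v_i)$ extended affinely. Writing $x\in\Delta$ in barycentric coordinates $x=\sum_i\lambda_i v_i$, applying the displayed estimate at $v_0$ with $y=v_i$ gives $h_R(v_i)=h_R(v_0)+\nabla h_R(v_0)\cdot(v_i-v_0)+\eta_i$ with $|\eta_i|\le m\ell\,\tau(m\ell)$; summing against $\lambda_i$ and using $\sum_i\lambda_i(v_i-v_0)=x-v_0$ to cancel the linear part, then subtracting the estimate at $v_0$ with $y=x$, yields
\[
	|h_K(x)-h_R(x)| \le 2m\,\tau(m\ell)\,\ell < \tfrac{1}{2}\varepsilon\ell
\]
once $\ell$ is small enough that $\tau(m\ell)<\varepsilon/(4m)$, establishing (b). The same cancellation forces $|\nabla h_K-\nabla h_R(v_0)|_\infty=O(\tau(m\ell))$ on $\Delta$; and applying the Egorov estimate symmetrically at $v_0$ and at any differentiability point $x\in\Delta\cap E$ yields $(\nabla h_R(x)-\nabla h_R(v_0))\cdot(v_j-v_0)=O(m\ell\,\tau(m\ell))$ along each of the $m$ linearly independent edge directions $v_j-v_0$, hence $|\nabla h_R(x)-\nabla h_R(v_0)|_\infty=O(\tau(m\ell))$. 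Combining, $|\nabla h_K-\nabla h_R|_2<\varepsilon$ on $K\cap E$, which is at least a $(1-\varepsilon)$-fraction of $K$ for $\varepsilon$ small, proving (c).
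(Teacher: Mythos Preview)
Your argument is correct and rests on the same idea as the paper's: extract from Rademacher's theorem a large-measure ``good'' set on which the first-order Taylor expansion of $h_R$ is uniformly accurate at scale $\ell$, keep only simplices tied to that set, and interpolate $h_R$ at the vertices.

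The packaging differs in two places. First, you invoke Egorov's theorem to produce $E$ and the modulus $\tau$, whereas the paper builds the analogous set by hand as $S_{\rho_0}=\{x:r_0(x)\ge\rho_0\}$, where $r_0(x)$ is the radius within which the Taylor remainder at $x$ is below a fixed threshold; these are equivalent devices. Second, and more materially, your admissibility criterion for a simplex is ``all $m+1$ vertices lie in $E$,'' which forces you to count lattice points of $\ell\Z^m$ inside $E$ and hence to introduce the inner box approximation $\widetilde E$. The paper instead admits every simplex that merely \emph{intersects} $S_{\rho_0}$: the discarded simplices are then pairwise disjoint and contained in $R\setminus S_{\rho_0}$, so their total measure is bounded directly by $|R\setminus S_{\rho_0}|$ with no lattice-point argument. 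The price the paper pays is that the Taylor expansion must be centred at an interior point $x\in\Delta\cap S_{\rho_0}$ rather than at a vertex; for (c) it then compares $\nabla h_K$ to $\nabla h_R(x)$ in one step along the edges $y_i-y_{i-1}=\ell e_{s(i)}$, avoiding the detour through $\nabla h_R(v_0)$ that you take. Your explicit check that vertex interpolation yields a $1$-Lipschitz $h_K$ (via the axis-aligned consecutive edges) is a point the paper leaves implicit.
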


\begin{remark}
We recall that the Rademacher theorem states that a Lipschitz function $h_R$
is differentiable almost everywhere.
However $\nabla h_R$ may be poorly behaved.
The Rademacher theorem gives no control over $\nabla h_R$,
and the Lipschitz property only implies boundedness of the derivative,
not regularity.
The simplicial Rademacher theorem provides an approximation both to $h_R$
and to its derivative.
Moreover the approximating function $h_K$ has a very simple derivative,
despite the potential wildness of $\nabla h_R$.
The cost is that $h_K$ only approximates $h_R$ well on a (large) portion
of the domain rather than almost everywhere,
but for our purposes this is a good trade-off.

In fact, it is not necessary that the function $h_R$ be Lipschitz.
Almost everywhere differentiability is sufficient.
\end{remark}

Before giving the proof of Lemma~\ref{lem_approx_tri},
we state and prove the following lemma about the standard simplices
from Definition~\ref{d_simpl_one}.

\begin{lemma} \label{lem_simpl_std_basis}
Let $\Delta$ be any of the simplices $C(v,s)$ for $v \in \Z^m$ and $s \in S_m$.
The $m+1$ vertices of $\Delta$ can be labelled $x^{(0)}, \dotsc, x^{(m)}$
in such a way that, for each $i=1, \dotsc, m$,
\[
	x^{(i)} - x^{(i-1)} = e^{(s(i))} \,,
\]
where for $1 \le j \le m$, $e^{(j)}$ denotes the $j$-th standard basis vector
(i.e., all entries of $e^{(j)}$ are $0$, except the $j$-th entry, which is $1$).
\end{lemma}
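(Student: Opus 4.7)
The plan is to reduce to the case $v = 0$ and then identify the $m+1$ vertices of $C(0,s)$ explicitly as partial sums of standard basis vectors. The reduction is immediate from~\eqref{e_c_v_s}: substituting $w \mapsto w - v$ shows $C(v,s) = v + C(0,s)$, and translating a labeling of the vertices of $C(0,s)$ by $v$ preserves the differences $x^{(i)} - x^{(i-1)}$, so it suffices to exhibit the correct labeling in the case $v = 0$.

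For $w$ with $\floor{w} = 0$ one has $w - \floor{w} = w$, and the rank-ordering condition $s(w) = s$ becomes the strict chain $w_{s(1)} > w_{s(2)} > \cdots > w_{s(m)}$. Taking the closure, I would identify
\[
	C(0, s) = \bigl\{ w \in [0,1]^m \,\big|\,
		1 \ge w_{s(1)} \ge w_{s(2)} \ge \cdots \ge w_{s(m)} \ge 0 \bigr\},
\]
a polytope cut out by exactly $m+1$ linear inequalities. Adopting the conventions $w_{s(0)} := 1$ and $w_{s(m+1)} := 0$, these inequalities read $w_{s(k)} \ge w_{s(k+1)}$ for $k = 0, 1, \ldots, m$. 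A vertex of this polytope is a point at which some $m$ of the inequalities are tight and exactly one is slack; if the one indexed by $k$ is slack, the remaining equalities force $w_{s(j)} = 1$ for $j \le k$ and $w_{s(j)} = 0$ for $j > k$, yielding the unique point
\[
	x^{(k)} := \sum_{j=1}^{k} e^{(s(j))}.
\]
As $k$ ranges over $\{0, 1, \ldots, m\}$, this produces $m+1$ distinct points, which (since $C(0,s)$ is an $m$-simplex) must be all of its vertices.

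To finish, direct inspection gives $x^{(i)} - x^{(i-1)} = e^{(s(i))}$ for each $i = 1, \ldots, m$, proving the claim for $C(0,s)$; adding $v$ to every $x^{(i)}$ transports the labeling to $C(v,s)$ without changing the consecutive differences. There is no substantial obstacle in this argument: the only step requiring a little care is the characterization of the closure as the displayed polytope, which is straightforward from the rank-ordering definition of $s(w)$ together with the observation that any $w$ satisfying the non-strict chain can be perturbed by an arbitrarily small amount to lie in the relative interior of $C(0,s)$.
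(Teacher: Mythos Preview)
Your proof is correct and follows essentially the same approach as the paper: both reduce to $v=0$, identify $C(0,s)$ with the chain polytope $\{w\in[0,1]^m : w_{s(1)}\ge\cdots\ge w_{s(m)}\}$, and exhibit the vertices as the partial sums $x^{(k)}=\sum_{j=1}^k e^{(s(j))}$. The only cosmetic difference is that the paper phrases the enumeration as a lattice path from $(0,\dots,0)$ to $(1,\dots,1)$ stepping in directions $e^{(s(1))},\dots,e^{(s(m))}$, whereas you extract the vertices directly from the facet description; the content is identical.
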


\begin{remark}
We encourage the reader to keep Figure~\ref{f_c_v_s_3d}
in mind (or better, in sight) while reading this proof.
\end{remark}

\begin{proof}
For simplicity, we assume without loss of generality that $v = 0$.
We use the permutation $s$ to define a path
between vertices of the simplex $C(0, s)$
starting at $(0, \dotsc, 0)$ and ending at $(1, \dotsc, 1)$.
To construct the path, first observe that
\[
	C(0, s)
	= \bigl \{x = (x_1, \dotsc, x_m) \in [0,1]^m
		\,\big|\, x_{s(i)} \ge x_{s(j)}
		\text{ for all $i < j$} \bigr\} \,.
\]
In other words, the $s(1)$-th component of $x$
must be greater than the $s(2)$-th,
which is greater than or equal to the $s(3)$-th, and so on.
The path travels from $(0,\dotsc,0)$
along the $s(1)$-th axis to  $e_{s(1)}$,
then parallel to the $s(2)$-th axis to $e_{s(1)} + e_{s(2)}$,
and so on up to $\sum_{i=1}^m e_i = (1, \dotsc, 1)$.
Numbering the vertices of the path from $x^{(0)}$ to $x^{(m)}$
proves the lemma.
\end{proof}

Now, we are ready for the proof of Lemma~\ref{lem_approx_tri}.

\begin{proof}[Proof of Lemma~\ref{lem_approx_tri}]
Let $\ell > 0$.
We choose the simplex domain
$K = \Delta_1 \cup \dotsb \cup \Delta_k$
such that $\{\Delta_1, \dotsc, \Delta_k\}$
enumerates all simplices of scale $\ell$ (cf.~Definition~\ref{d_simpl_ell})
that are contained in $R$.
We define the asymptotic height function $h_K \in M(K)$
to agree with $h_R$ on the vertices of the simplices in $K$,
and we extend $h_K$ into the rest of each $\Delta_i$ by linear interpolation.
We will show that, once $\ell$ is small enough,
properties~\ref{lem_approx_tri_volume},~\ref{lem_approx_tri_approx},
and~\ref{lem_approx_tri_grad} from Lemma~\ref{lem_approx_tri} all hold.

\medskip

First we prove~\ref{lem_approx_tri_volume}.
The fact that $|K|$ tends to $|R|$ as $\ell \to 0$
is elementary measure theory, and we omit the proof.

Recall from~\eqref{e_hausdorff} that we define the Hausdorff metric $d_H$
in terms of the $\ell^1$ metric on $\R^m$,
for reasons explained in Remark~\ref{rem_hausdorff}.
Therefore, for the second part of~\ref{lem_approx_tri_volume},
it suffices to show that
\[
	R
	\subset K + B_{\ell^1}(0, \varepsilon)
	:= \bigl\{ x + y \,\big|\, x \in K ,\, |y|_1 < \varepsilon \bigr\} \,.
\]

We do this by constructing a subset $R' \subset R$
such that $R \subset R' + B_{\ell^1}(0, \varepsilon)$
and $R' + B_{\ell^1}(0, \alpha) \subset R$ for some $\alpha < \varepsilon$.
The latter condition ensures that, for $\ell$ small enough, $R' \subset K$.
Indeed, the $\ell^1$-diameter of a simplex of scale $\ell$ is $\le m \ell$
(i.e.\@
$
	\diam_1 \Delta
	:= \max \{ |x-y|_1 \,|\, x,y \in \Delta \}
	\le m \ell
$%
), so as long as $\ell < \frac{\alpha}{m}$,
every point $x \in R'$ belongs to a simplex $\Delta_i$ of scale $\ell$
which is part of $K$.
Therefore $R' \subset K$, so $R \subset K + B_{\ell^1}(0, \varepsilon)$
as intended.

We proceed to construct $R'$.
Since $R$ is the closure of its interior,
$R \subset \bigcup_{x \in \interior{R}} B_{\ell^1}(x, \varepsilon)$.
By compactness, choose $x_1, \dotsc, x_k$ such that
$
	R
	\subset B_{\ell^1}(x_1, \varepsilon) \cup \dotsb
		\cup B_{\ell^1}(x_k, \varepsilon)
$.
For $i=1, \dotsc, k$, let
\[
	\alpha_i
	:= \max \bigl\{ \tilde \alpha > 0 \,\big|\,
		B_{\ell^1}(x_i, \tilde \alpha) \subset R \bigr\}
\]
Since $x_i \in \interior{R}$, each $\alpha_i > 0$;
since $R$ is compact, each $\alpha_i < \infty$.
Let $\alpha = \min \{\alpha_1, \dotsc, \alpha_k, \frac{\varepsilon}{2}\}$,
and set
\[
	R'
	= \bigl\{ x \in \interior{R} \,\big|\, B_{\ell^1}(x, \alpha) \subset R \} \,.
\]

By construction, $R' + B_{\ell^1}(0, \alpha) \subset R$,
and since $x_1, \dotsc, x_k \in R'$,
$R \subset R' + B_{\ell^1}(0, \varepsilon)$.
So $R'$ satisfies the relations claimed above,
which completes the proof of~\ref{lem_approx_tri_volume}.

\medskip

For later use we strengthen
the volume estimate from~\ref{lem_approx_tri_volume}.
Choose $\ell$ smaller so that
\begin{equation} \label{lem_approx_tri_strong_volume}
	|R \setminus K|
	< \varepsilon'
	:= \min \left\{ \frac{\varepsilon}{2},
		\frac{\varepsilon}{2|R|},
		\frac{1}{2} \right\} \,.
\end{equation}
In particular, this implies that
\begin{equation} \label{e_pf_approx_tri_k_rel_r}
	|R| <  \frac{1}{1 - \varepsilon'} \, |K| \,.
\end{equation}

\medskip

Let us describe the key idea used to prove~\ref{lem_approx_tri_approx}
and~\ref{lem_approx_tri_grad}.
We consider points $x$ where $h_R$ is differentiable,
and indeed where $h_R$ is locally approximated well
by its first-order Taylor polynomial.
Once the simplices $\Delta_i$ are small enough and contain a ``good'' point $x$,
the vertices all lie close to $x$,
so we can use the Taylor polynomial
to estimate the values of $h_R$ on the vertices.
This yields the proof of~\ref{lem_approx_tri_approx}
and~\ref{lem_approx_tri_grad}.

To be more precise, we define a set $S_{\rho_0}$ of ``good'' points.
Recall that the Lipschitz function $h_R$ is almost everywhere differentiable,
by the Rademacher theorem.
Consider any point $x \in R$ at which $\nabla h_R$ exists.
Define the Taylor polynomial
\[
	L_x(y) := h_R(x) + \nabla h_R(x) \cdot (y - x) \,.
\]
By the definition of differentiability,
\[
	\lim_{y \to x} \frac{|h_R(y) - L_x(y)|}{|x-y|_2} = 0 \,,
\]
so there exists $r_0(x) > 0$ such that,
for any $y \in R$ with $|y-x|_2 < r_0(x)$,
\begin{equation} \label{e_pf_approx_tri_diff}
	| h_R(y) - L_x(y) |
	< \Bigl( \frac{\varepsilon}{4\sqrt{m}}
		\, \wedge \, \frac{\varepsilon}{2m} \Bigr)
	\; |y-x|_2 \,.
\end{equation}
(Recall that $m \in \N$ is the dimension parameter;
we could replace the parenthesized expression by $\frac{\varepsilon}{4m}$,
but the expressions $\frac{\varepsilon}{4 \sqrt m}$
and $\frac{\varepsilon}{2m}$ are useful later.)
For $\rho > 0$, define the set $S_\rho \subset R$ by
\[
	S_\rho := \bigl\{ x \in R \,\big|\, r_0(x) \ge \rho \bigr\} \,.
\]
As $\rho \to 0$, the sets $S_\rho$ increase to the full-measure subset of $R$
on which $h_R$ is differentiable.
Therefore $|S_\rho| \to |R|$ as $\rho \to 0$,
and in particular, there exists $\rho_0 > 0$ such that
\begin{equation} \label{e_pf_approx_tri_srho_rel_r}
	|S_{\rho_0}| \ge \Bigl( 1- \frac{\varepsilon'}{2} \Bigr) \, |R| \,.
\end{equation}

We choose $\ell_0 \le \frac{\rho_0}{\sqrt{m}}$.
By the Pythagorean theorem (in $m$ dimensions),
if $x,y$ are two points that lie in a simplex $\Delta_i$
and if $x \in S_{\rho_0}$,
then $|x-y|_2 \le \sqrt{m} \ell < \rho_0 \le r(x)$.
Therefore by~\eqref{e_pf_approx_tri_diff},
\begin{equation} \label{e_pf_approx_tri_approx}
	\max_{y \in \Delta_i} \, | h_R(y) - L_x(y) |
	\; < \;
	\frac{\varepsilon\ell}{4}
		\, \wedge \, \frac{\varepsilon\ell}{2\sqrt m} \,.
\end{equation}

\medskip

There are two more steps to prove~\ref{lem_approx_tri_approx}.
First, under the assumption that $x \in \Delta_i \cap S_{\rho_0}$,
we have compared $h_R|_{\Delta_i}$
to the Taylor polynomial of $h_R$ centered at $x$;
we should also compare $h_K$ to the same polynomial.
Second, we show that at least $(1-\varepsilon)k$ of the simplices
have some intersection with $S_{\rho_0}$.
Then it is straightforward to complete the proof of~\ref{lem_approx_tri_approx}.

Regarding $h_K$, recall that on the vertices $y_0, \dotsc, y_m$ of $\Delta_i$,
$h_K$ agrees with $h_R$.
Therefore by~\eqref{e_pf_approx_tri_approx},
\[
	\max_{0 \le i \le m} | h_K(y_i) - L_x(y_i) |
	< \frac{\varepsilon\ell}{4} \,.
\]
The function $y \mapsto h_K(y) - (h_R(x) + \nabla h_R(x) \cdot(y-x))$ is affine,
so $y \mapsto |h_K(y) - (h_R(x) + \nabla h_R(x) \cdot(y-x))|$ is convex.
Hence
\begin{equation} \label{e_pf_approx_tri_maxk}
	\max_{y \in \Delta_i} \, | h_K(y) - L_x(y) |
	< \frac{\varepsilon\ell}{4} \,.
\end{equation}
Therefore, if a simplex $\Delta_i$ has any intersection with $S_{\rho_0}$,
then~$h_K$ satisfies the $L^\infty$ inequality from~\ref{lem_approx_tri_approx}
over $\Delta_i$.

Now, let $k_0$ denote the number of simplices that intersect $S_{\rho_0}$.
We claim that $k_0 \ge (1-\varepsilon') k$.
Of course, since $S_{\rho_0}$ has large measure,
it must intersect many of the simplices.
More precisely, because $(k - k_0)$ is the number of simplices that do not
intersect $S_{\rho_0}$,
\[
	|S_{\rho_0}| \le |R| - (k - k_0) |\Delta_1| \,.
\]
(Recall that the simplices of scale $\ell$ are isometric,
so $|\Delta_1| = \dotsb = |\Delta_k| = \frac{\ell^m}{m!}$.)
Therefore
\[ \begin{aligned}
	k - k_0
	&\le \frac{|R| - |S_{\rho_0}|}{|\Delta_1|} \\
	&\le \frac{\varepsilon'}{2} \frac{|R|}{|\Delta_1|}
	&\qquad&\textit{(By~\eqref{e_pf_approx_tri_srho_rel_r})} \\
	&\le \frac{\varepsilon'}{2}
		\frac{(1 - \varepsilon')^{-1}|K|}{|\Delta_1|}
	&\qquad&\textit{(By~\eqref{e_pf_approx_tri_k_rel_r})} \\
	&= \frac{\varepsilon'}{2} (1 - \varepsilon')^{-1} k
	&\qquad&\textit{(Since $K = \bigcup_{i=1}^k \Delta_i$)} \\
	&\le \varepsilon' k \,,
\end{aligned} \]
since $\varepsilon' \le \frac{1}{2}$.
So, both~\eqref{e_pf_approx_tri_approx} and~\eqref{e_pf_approx_tri_maxk}
apply on $k_0 \ge (1-\varepsilon')k$ of the simplices.
We throw away the ``bad'' simplices,
at the cost of increasing $|R \setminus K|$ by at most
\[
	\varepsilon' k |\Delta_i|
	\le \left( \frac{\varepsilon}{2|R|} \right) k
		\left( \frac{|R|}{k} \right)
	= \frac{\varepsilon}{2}
	\,.
\]
This is permissible by~\eqref{lem_approx_tri_strong_volume},
since $|R \setminus K|$ was previously $\le \frac{\varepsilon}{2}$.
We have therefore proven~\ref{lem_approx_tri_approx}.

\medskip

For~\ref{lem_approx_tri_grad},
we claim that if $x \in K \cap S_{\rho_0}$,
then $|\nabla h_R(x) - \nabla h_K(x)| < \varepsilon$.
By~\eqref{e_pf_approx_tri_k_rel_r} and~\eqref{e_pf_approx_tri_srho_rel_r},
$|K \cap S_{\rho}| > (1 - \varepsilon) |R|$,
so this will suffice to prove~\ref{lem_approx_tri_grad}.

Let $x \in K \cap S_{\rho_0}$,
and fix $i$ such that $x \in \Delta_i$.
We will use Lemma~\ref{lem_simpl_std_basis} in order
to make the calculations as concrete as possible.
In particular, we label the vertices of $\Delta_i$
as $y_0, y_1, \dotsc, y_m$ in such a way that
$y_i - y_{i-1} = \ell e_{s(i)}$,
where $e_{s(i)}$ is the $s(i)$-th standard basis vector
for some permutation $s \in S_m$ of $\{1, \dotsc, m\}$.
Then by~\eqref{e_pf_approx_tri_approx},
\[
	| h_R(y_i) - L_x(y_i) | < \frac{\varepsilon\ell}{2\sqrt m}
	\quad \text{and} \quad
	| h_R(y_{i-1}) - L_x(y_{i-1}) | < \frac{\varepsilon\ell}{2\sqrt m} \,,
\]
where we recall that $L_x(y) = h_R(x) + \nabla h_R(x) \cdot (y-x)$
is the first-order Taylor polynomial of $h_R$ at $x$.
Combining these two inequalities,
\[
	\bigl| \bigl( h_R(y_i) - h_R(y_{i-1}) \bigr)
		- \bigl( \nabla h_R(x) \cdot (y_i - y_{i-1}) \bigr) \bigr|
	< \frac{\varepsilon\ell}{\sqrt m} \,.
\]
Since $y_i - y_{i-1} = \ell e_{s(i)}$,
\begin{equation} \label{e_pf_approx_tri_deriv3}
	\biggl| \frac{h_R(y_i) - h_R(y_{i-1})}{|y_i - y_{i-1}|_2}
		- \nabla h_R(x) \cdot
			\frac{y_i - y_{i-1}}{|y_i - y_{i-1}|_2} \biggr|
	< \frac{\varepsilon}{\sqrt m} \,.
\end{equation}
Because $h_K$ is the linear interpolation of $h_R$
from the vertices $y_0, \dotsc, y_m$ to the rest of $\Delta_i$,
we see that the first term
on the left-hand side of~\eqref{e_pf_approx_tri_deriv3}
is
\[
	\frac{h_R(y_i) - h_R(y_{i-1})}{|y_i - y_{i-1}|_2}
	= \frac{h_K(y_i) - h_K(y_{i-1})}{|y_i - y_{i-1}|_2}
	= \frac{\partial h_K}{\partial x_{s(i)}}(x) \,.
\]
And of course, $(y_i - y_{i-1})/|y_i - y_{i-1}|_2 = e_{s(i)}$,
so the second term on the left-hand side of~\eqref{e_pf_approx_tri_deriv3} is
\[
	\nabla h_R(x) \cdot \frac{y_i - y_{i-1}}{|y_i - y_{i-1}|_2}
	= \frac{\partial h_R}{\partial x_{s(i)}}(x) \,.
\]
The last three equations hold for all $i=1, \dotsc, m$.
Therefore we may drop the permutation $s(i)$ from the partial derivatives
and conclude that, for every $i$,
\[
	\biggl| \frac{\partial h_K}{\partial x_i}(x)
		- \frac{\partial h_R}{\partial x_i}(x) \biggr|
	< \frac{ \varepsilon}{\sqrt m}\,.
\]
Thus
\[
	\bigl| \nabla h_K(x) - \nabla h_R(x) \bigr|_2
	< \varepsilon \,.
\]
\end{proof}

\medskip

The next three lemmas regard the robustness of the macroscopic entropy
and microscopic entropy to changes in domain and asymptotic height function
As seen in the simplicial Rademacher theorem (Lemma~\ref{lem_approx_tri}),
we will change both the domain and the asymptotic height function.
As long as these changes are small enough (in the appropriate senses),
these lemmas show that the macroscopic entropy and microscopic entropy
change by a small amount.

First, we deal with robustness of the macroscopic entropy.
Because $\Ent_R$ is an integral function with continuous and bounded integrand,
robustness with respect to changes in both domain and asymptotic height function
is easy to prove by standard analytic arguments.
The main requirement is control over the change in the derivative of
the asymptotic height function, as is provided by~\ref{lem_approx_tri_grad}
from the simplicial Rademacher theorem (see Lemma~\ref{lem_approx_tri}).

\begin{lemma}[Robustness of macroscopic entropy under approximations]
\label{lem_approx_macro_ent}
Let $\varepsilon > 0$,
and let $\tilde R \subseteq R \subset \R^m$ be sets
meeting the assumptions from Assumption~\ref{a_domain},
with $|R \setminus \tilde R| < \varepsilon$.
Let $h_{\tilde R} \in M({\tilde R})$ and $h_R \in M(R)$ be such that
\[
	\Bigl| \Bigl\{ x \in \tilde R \Bigm\vert
		|\nabla h_{\tilde R}(x) - \nabla h_R(x)|_2 \ge \varepsilon
	\Bigr\} \Bigr|
	< \varepsilon \,.
\]

Then,
\[
	\Ent_R(h_R) = \Ent_{\tilde R}(h_{\tilde R}) + \theta_m(\varepsilon) \,.
\]
\end{lemma}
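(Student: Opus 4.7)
The plan is to use the two fundamental properties of the local surface tension $\ent$ established earlier in the paper: uniform continuity on the compact cube $[-1,1]^m$ (Lemma~\ref{lem_cont}, where the modulus of continuity depends only on $m$) and the two-sided bound $-\log 2 \le \ent(s) \le 0$ (Observation~\ref{obs_surf_tens_bdd}). I split the difference of macroscopic entropies into a ``domain'' piece and a ``gradient'' piece:
\[
	\Ent_R(h_R) - \Ent_{\tilde R}(h_{\tilde R})
	= \int_{R \setminus \tilde R} \ent(\nabla h_R(x)) \, dx
	+ \int_{\tilde R} \bigl[ \ent(\nabla h_R(x)) - \ent(\nabla h_{\tilde R}(x)) \bigr] \, dx \,,
\]
and I will bound each of these two terms by $\theta_m(\varepsilon)$ separately.

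For the first term, I use only boundedness: the integrand is at most $\log 2$ in absolute value, and $|R \setminus \tilde R| < \varepsilon$ by hypothesis, so the integral is bounded by $\varepsilon \log 2 = \theta(\varepsilon)$. For the second term, I decompose $\tilde R = A \cup (\tilde R \setminus A)$, where $A := \{x \in \tilde R : |\nabla h_R(x) - \nabla h_{\tilde R}(x)|_2 < \varepsilon\}$, so that by hypothesis $|\tilde R \setminus A| < \varepsilon$. On $\tilde R \setminus A$ I again use only boundedness, obtaining a contribution of at most $2 \varepsilon \log 2 = \theta(\varepsilon)$ since the integrand is bounded by $2 \log 2$ pointwise. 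On $A$, I apply the uniform continuity of $\ent$: let $\omega_m$ denote its modulus of continuity on $[-1,1]^m$ (which depends only on $m$ by Lemma~\ref{lem_cont}); then $|\ent(\nabla h_R(x)) - \ent(\nabla h_{\tilde R}(x))| \le \omega_m(\varepsilon)$ for $x \in A$, giving a contribution of at most $|R| \, \omega_m(\varepsilon)$, which fits into the $\theta_m(\varepsilon)$ class because $|R|$ is a suppressed parameter in our asymptotic convention (see Section~\ref{ss_asymp}). Summing the three contributions yields $\theta_m(\varepsilon)$.

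There is no serious obstacle in this proof: it is purely an analytic robustness argument for an integral functional with bounded, uniformly continuous integrand. The only point requiring care is tracking that the modulus of continuity of $\ent$ depends only on $m$ (and not on $R$, $h_R$, etc.), so that the final error term falls into the $\theta_m(\varepsilon)$ class advertised in the conclusion. This follows directly from the compactness of $[-1,1]^m$ together with Lemma~\ref{lem_cont}. The same proof incidentally also shows that the macroscopic entropy is continuous in $h_R$ when the derivative is perturbed in measure, which is exactly the form needed to combine with the simplicial Rademacher approximation (Lemma~\ref{lem_approx_tri}) in the subsequent bootstrap to Theorem~\ref{th_profile}.
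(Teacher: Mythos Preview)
Your proof is correct and follows essentially the same three-part decomposition as the paper: the region $R\setminus\tilde R$ (handled by boundedness of $\ent$), the ``bad'' subset of $\tilde R$ where the gradients differ by at least $\varepsilon$ (again boundedness), and the remaining ``good'' set (uniform continuity of $\ent$). The only cosmetic difference is that the paper's proof works with the normalized form $\frac{1}{|R|}\int_R\ent(\nabla h_R)$ rather than the unnormalized integral you use (following Definition~\ref{d_macro_ent} literally); this is an internal inconsistency in the paper and does not affect the argument, since $|R|$ is a suppressed parameter in the $\theta_m(\varepsilon)$ notation either way.
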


\begin{proof}
Recall that $\Ent_R(h_R) = \frac{1}{|R|} \int_R \ent(\nabla h_R(x)) \, dx$
and $
	\Ent_{\tilde R}(h_{\tilde R})
	= \frac{1}{|\tilde R|} \int_{\tilde R}
		\ent(\nabla h_{\tilde R}(x)) \, dx$.
Split $R$ into three parts.

The set $
	\{x \in \tilde R \,\vert\,
		|\nabla h_{\tilde R}(x) - \nabla h_R(x)|_2 \ge \varepsilon\}
$
has measure less than $\varepsilon$ by hypothesis.
Since $\ent(s)$ is bounded (see Observation~\ref{obs_surf_tens_bdd}),
the contribution of the points in this set to
$\Ent_{\tilde R}(h_{\tilde R})$ is within $\theta(\varepsilon)$
of the contribution to $\Ent_R(h_R)$.

Likewise, the set $R \setminus \tilde R$ has measure at most $\varepsilon$,
so the contribution to $\Ent_R(h_R)$ is $\theta(\varepsilon)$.
Of course, this set does not contribute to $\Ent_{\tilde R}(h_{\tilde R})$.

Finally, for the remaining points $x$,
$|\nabla h_{\tilde R}(x) - \nabla h_R(x)|_2 < \varepsilon$.
Since $\ent(s)$ is uniformly continuous on its domain $s \in [-1,1]^m$,
we have
$|\ent(\nabla h_{\tilde R}(x)) - \ent(\nabla h_R(x))| < \theta_m(\varepsilon)$.
Since the integrands differ by at most $\theta_m(\varepsilon)$
and since the integrals are normalized
by $\frac{1}{|R|}$ and $\frac{1}{|\tilde R|}$,
the contribution from this third part of the domain
is also $\theta_m(\varepsilon)$.
\end{proof}

\medskip

Now, we turn to the microscopic entropy.
Here it is easier to record two separate robustness results.
The first is robustness with respect to changes in the asymptotic height
function, and the second is robustness with respect to changes in domain.
Robustness with respect to changes in the asymptotic height function
comes immediately from the definition of the balls $B(R_n, h_R, \varepsilon)$.

\begin{lemma}[Robustness of microscopic entropy under change in profile]
\label{lem_approx_micro_ent_ahf}
Let $\varepsilon > 0$ and $n \in \N$.
Let $R \subset \R^m$ satisfy Assumption~\ref{a_domain},
and let $R_n \subset \Z^m$ satisfy $\tfrac{1}{n} R_n \subset R$.
Let $h_R, \tilde h_R \in M(R)$ be two asymptotic height functions
such that $\max_{x \in R} |h_R(x) - \tilde h_R(x)| \le \varepsilon$.
Then,
\[
	\Ent_{R_n} \bigl( B(R_n, h_R, 2 \varepsilon) \bigr)
	\le \Ent_{R_n} \bigl( B(R_n, \tilde h_R, \varepsilon) \bigr) \,.
\]
\end{lemma}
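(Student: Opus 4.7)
The plan is to reduce the entropy inequality to a direct set inclusion. Recall from Definition~\ref{d_ht_func_sets} that $B(R_n, \cdot, \cdot)$ depends on its asymptotic-height-function argument only through the pointwise sup-distance to the rescaled microscopic heights. Moreover, by definition $\Ent_{R_n}(A) = -\tfrac{1}{|R_n|} \ln |A|$ is strictly decreasing in the cardinality $|A|$. Therefore, to prove the lemma it suffices to establish the inclusion
\[
	B(R_n, \tilde h_R, \varepsilon) \subseteq B(R_n, h_R, 2\varepsilon),
\]
from which the stated entropy bound follows at once by monotonicity.

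To verify the inclusion, I would take an arbitrary $h_{R_n} \in B(R_n, \tilde h_R, \varepsilon)$ and apply the triangle inequality pointwise. For every $z \in R_n$,
\[
	\bigl| h_R(\tfrac{1}{n} z) - \tfrac{1}{n} h_{R_n}(z) \bigr|
	\le \bigl| h_R(\tfrac{1}{n} z) - \tilde h_R(\tfrac{1}{n} z) \bigr|
	+ \bigl| \tilde h_R(\tfrac{1}{n} z) - \tfrac{1}{n} h_{R_n}(z) \bigr|.
\]
The first term is bounded by $\varepsilon$ thanks to the hypothesis $\max_{x \in R} |h_R(x) - \tilde h_R(x)| \le \varepsilon$ (noting that $\tfrac{1}{n} z \in R$ by Assumption~\ref{a_domain}), while the second term is strictly less than $\varepsilon$ by membership of $h_{R_n}$ in $B(R_n, \tilde h_R, \varepsilon)$. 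Taking the supremum over $z \in R_n$, one sees that $h_{R_n}$ lies in $B(R_n, h_R, 2\varepsilon)$, as required.

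There is no substantive obstacle in this argument; it is essentially a bookkeeping lemma that converts the hypothesis on $\|h_R - \tilde h_R\|_\infty$ into an inclusion of balls in microscopic-height space. The only subtlety worth tracking is that the hypothesis delivers a non-strict bound whereas the ball $B(R_n, \cdot, \cdot)$ is defined by a strict inequality; combining a non-strict $\le \varepsilon$ with a strict $< \varepsilon$ gives the strict $< 2\varepsilon$ needed for membership in $B(R_n, h_R, 2\varepsilon)$, so the argument goes through without modification.
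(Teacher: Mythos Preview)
Your proof is correct and follows essentially the same approach as the paper: reduce the entropy inequality to the set inclusion $B(R_n,\tilde h_R,\varepsilon)\subseteq B(R_n,h_R,2\varepsilon)$ via monotonicity of $\Ent_{R_n}$, then verify the inclusion by the pointwise triangle inequality. Your remark about combining the non-strict $\le\varepsilon$ from the hypothesis with the strict $<\varepsilon$ from the ball definition is a nice bit of extra care that the paper leaves implicit.
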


\begin{proof}
It suffices to notice that
$B(R_n, h_R, 2 \varepsilon) \supseteq B(R_n, \tilde h_R, \varepsilon)$.
This follows from the triangle inequality:
for any $h_{R_n} \in B(R_n, \tilde h_R, \varepsilon)$ and any $z \in R_n$,
\[ \begin{aligned}
	\bigl| \tfrac{1}{n} h_{R_n}(z) - h_R(\tfrac{1}{n}z) \bigr|
	&\le \bigl| \tfrac{1}{n} h_{R_n}(z) - \tilde h_R(\tfrac{1}{n}z) \bigr|
	+ \bigl| \tilde h_R(\tfrac{1}{n}z) - h_R(\tfrac{1}{n}z) \bigr| \\
	&\le \varepsilon + \varepsilon \,.
\end{aligned} \]
\end{proof}

\medskip

More care is needed to state and prove robustness of the microscopic entropy
with respect to changes in domain.
The main idea is straightforward.
Given two microscopic domains $\tilde R_n \subset R_n$,
we will consider the extension map from
$B(\tilde R_n, h_R, \varepsilon)$ to $B(R_n, h_R, \varepsilon)$
and the restriction map in the opposite direction.
So long as every height function on the smaller domain admits an extension,
we have $|B(\tilde R_n, h_R, \varepsilon)| \le |B(R_n, h_R, \varepsilon)|$.
In the opposite direction, the restriction map is not generally an injection
but the pre-images are not too large; at most $2^N$ height functions on $R_n$
restrict to the any specific height function on $\tilde h_R$,
where $N = |R_n| \setminus |\tilde R_n|$.

Most of the complications arise in the extension step.
Our primary extension result,
namely the Kirszbraun theorem (Theorem~\ref{th_kirszbraun}),
is insufficient.
It states that a height function
$h_{\tilde R_n} \in B(\tilde R_n, h_R, \varepsilon)$
admits an extension to $R_n$,
but that extension is not necessarily in $B(R_n, h_R, \varepsilon)$.
There are two ways forward: to prove a stronger extension theorem specialized
to the problem under consideration,
or to leverage the Lipschitz property to control the extension.
For greater generality, we prefer the second method.
However there are a few difficulties:
the Kirszbraun theorem is subtle when the asymptotic height profile has
$|\nabla h_K|_\infty$ in part of the region,
and the extension cannot generally be kept
within distance $\varepsilon$ of $h_R$.
This leads to the following somewhat complex formulation.

\begin{lemma}[Robustness of microscopic entropy under domain approximations]
\label{lem_approx_micro_ent_dom}
Let $\varepsilon \in (0, 1]$ and $n \in \N$ with $n \ge \tfrac{1}{\varepsilon}$.
Let $\tilde R \subset R \subset \R^m$ and $\tilde R_n \subset R_n \subset \Z^m$
satisfy these assumptions:
\begin{align}
	\tfrac{1}{n} R_n &\subset R \,, &
	\tfrac{1}{n} \tilde R_n &\subset \tilde R \,, \\
	d_H(\tfrac{1}{n} R_n, R) &= \theta(\varepsilon) \,, &
	d_H(\tfrac{1}{n} \tilde R_n, \tilde R) &= \theta(\varepsilon) \,, \\
\end{align}
Additionally, assume that
\[
	\frac{|R|}{|\tilde R|} = 1 + \theta(\varepsilon) \,.
\]

Let $h_R \in M(R)$ be an asymptotic height function
with $\Lip(h_R) \le 1 - c \varepsilon$ for some fixed $c \in (0, 1]$.
Then,
\[ \begin{aligned}
	\hskip3em&\hskip-3em
	\Ent_{\tilde R_n} \bigl( B(\tilde R_n, h_R, \varepsilon \bigr)
	+ \theta(\varepsilon) + \theta_\varepsilon(\tfrac{1}{n}) \\
	&\le \Ent_{R_n} \bigl( B(R_n, h_R, \varepsilon) \bigr) \\
	&\le \Ent_{\tilde R_n} \bigl(
		B(\tilde R_n, h_R, \tfrac{c}{3} \varepsilon^2) \bigr)
	+ \theta(\varepsilon) + \theta_\varepsilon(\tfrac{1}{n}) \,.
\end{aligned} \]
\end{lemma}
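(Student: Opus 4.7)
The plan is to prove the two inequalities separately. The left inequality (the easy restriction direction) follows from a counting argument based on graph-homomorphism extensions, while the right inequality (the delicate extension direction) relies on the Kirszbraun theorem (Theorem~\ref{th_kirszbraun}) together with the slope hypothesis $\Lip(h_R) \le 1-c\varepsilon$ and the near-equal volume assumption. The main obstacle will be verifying that the Kirszbraun extension remains within $\varepsilon$ of $h_R$ at points of $R_n$ far from $\tilde R_n$.

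\textbf{Left inequality.} The restriction $h_{R_n} \mapsto h_{R_n}|_{\tilde R_n}$ maps $B(R_n, h_R, \varepsilon)$ into $B(\tilde R_n, h_R, \varepsilon)$, and each fiber has cardinality at most $2^{|R_n \setminus \tilde R_n|}$: once the restriction is fixed, at each remaining lattice point the graph-homomorphism constraint admits at most two integer values. Thus
\[
\lvert B(R_n, h_R, \varepsilon) \rvert \le 2^{|R_n \setminus \tilde R_n|} \cdot \lvert B(\tilde R_n, h_R, \varepsilon) \rvert,
\]
and taking $-\log$ and dividing by $|R_n|$ yields
\[
\Ent_{R_n}\bigl(B(R_n, h_R, \varepsilon)\bigr) \ge \frac{|\tilde R_n|}{|R_n|}\, \Ent_{\tilde R_n}\bigl(B(\tilde R_n, h_R, \varepsilon)\bigr) - \frac{|R_n \setminus \tilde R_n|}{|R_n|}\log 2.
\]
The volume hypothesis delivers $|R_n \setminus \tilde R_n|/|R_n| = \theta(\varepsilon)+\theta_\varepsilon(\tfrac{1}{n})$ and $|\tilde R_n|/|R_n| = 1-\theta(\varepsilon)-\theta_\varepsilon(\tfrac{1}{n})$, and boundedness of $\Ent_{\tilde R_n}$ (Observation~\ref{obs_ent_bdd}) turns the perturbed prefactor into an additive $\theta(\varepsilon)+\theta_\varepsilon(\tfrac{1}{n})$ error.

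\textbf{Right inequality.} I will inject $B(\tilde R_n, h_R, \tfrac{c}{3}\varepsilon^2)$ into $B(R_n, h_R, \varepsilon)$ via the maximal Kirszbraun extension
\[
h_{R_n}(y) := \max\bigl\{ h_{\tilde R_n}(x) - |x-y|_1 \,\big|\, x \in \tilde R_n\bigr\}.
\]
By Theorem~\ref{th_kirszbraun} this is a graph homomorphism restricting to $h_{\tilde R_n}$, so the map is injective and the conclusion then mirrors the left inequality. For the upper bound at $y \in R_n$, every $x \in \tilde R_n$ satisfies
\[
h_{\tilde R_n}(x) - |x-y|_1 \le n h_R(\tfrac{1}{n}x) + \tfrac{c\varepsilon^2 n}{3} - |x-y|_1 \le n h_R(\tfrac{1}{n}y) + \tfrac{c\varepsilon^2 n}{3} - c\varepsilon |x-y|_1,
\]
where the second step uses $\Lip(h_R) \le 1-c\varepsilon$; maximizing over $x$ gives $h_{R_n}(y) \le n h_R(\tfrac{1}{n}y) + \tfrac{c\varepsilon^2 n}{3}$. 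For the lower bound, pick the closest anchor $x^\star \in \tilde R_n$ to $y$ and run the same chain in reverse to obtain
\[
h_{R_n}(y) \ge n h_R(\tfrac{1}{n}y) - \tfrac{c\varepsilon^2 n}{3} - (2-c\varepsilon)\,|x^\star - y|_1.
\]

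\textbf{The main obstacle.} The decisive question is whether $|x^\star - y|_1 \le \tfrac{\varepsilon n}{2}$ holds for every $y \in R_n$; equivalently, whether $d_H(\tfrac{1}{n}\tilde R_n, \tfrac{1}{n}R_n) = \theta(\varepsilon)$. By the triangle inequality and the two given Hausdorff hypotheses, this reduces to establishing $d_H(\tilde R, R) = \theta(\varepsilon)$, which must be extracted from $|R|/|\tilde R| = 1 + \theta(\varepsilon)$ together with the regularity from Assumption~\ref{a_domain}. This is the genuinely delicate step: it relies on $R \setminus \tilde R$ being geometrically a thin shell rather than merely a set of small measure. In the intended application through Lemma~\ref{lem_approx_tri}, $\tilde R$ is an inner simplicial approximation of $R$ and this Hausdorff closeness comes for free. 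The constant $\tfrac{c}{3}$ in the input tolerance $\tfrac{c}{3}\varepsilon^2$ is then calibrated so that $\tfrac{c\varepsilon^2 n}{3} + (2-c\varepsilon)\tfrac{\varepsilon n}{2} \le \varepsilon n$, which closes out the bound and completes the proof after the same volume-ratio manipulation as in the left inequality.
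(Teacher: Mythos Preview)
Your left inequality is exactly the paper's argument: restriction plus the fiber bound $2^{|R_n\setminus\tilde R_n|}$ and the volume estimate.

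For the right inequality your approach diverges from the paper's, and the ``main obstacle'' you flag is a genuine gap in your argument for the lemma \emph{as stated}. You extend from $\tilde R_n$ alone via the maximal Kirszbraun formula and then need every $y\in R_n$ to lie within $\tfrac{\varepsilon n}{2}$ of $\tilde R_n$. As you correctly observe, this amounts to $d_H(\tilde R,R)=\theta(\varepsilon)$, which does \emph{not} follow from the hypotheses: $|R|/|\tilde R|=1+\theta(\varepsilon)$ only says $R\setminus\tilde R$ has small measure, not that it is a thin shell (imagine $R$ a dumbbell and $\tilde R$ the two ends with the connecting bridge removed). So your proof covers the intended application via Lemma~\ref{lem_approx_tri} but not the lemma in full generality.

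The paper sidesteps this entirely by a two-stage construction. It first \emph{prescribes} values on the far set
\[
R_n' := \bigl\{z\in R_n \,\big|\, d(z,\tilde R_n) > \tfrac{\varepsilon}{3}n\bigr\}
\]
to be $n h_R(\tfrac{1}{n}z)$ rounded to the correct parity (with a consistent rounding convention), so that on $R_n'$ the extension tracks $h_R$ to within $1\le\varepsilon n$ automatically, regardless of how far $R_n$ protrudes beyond $\tilde R_n$. It then checks the Kirszbraun inequality on $\tilde R_n\cup R_n'$: across the two pieces the gap $|z-\tilde z|_1\ge\tfrac{\varepsilon}{3}n$ and the slope deficit $c\varepsilon$ absorb the $\tfrac{c}{3}\varepsilon^2 n$ input tolerance, and within $R_n'$ the consistent rounding handles the parity edge case. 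Kirszbraun then fills only the narrow intermediate zone $R_n\setminus(\tilde R_n\cup R_n')$, where each point is within $\tfrac{\varepsilon}{3}n$ of some $\tilde z\in\tilde R_n$, and a Lipschitz triangle inequality gives $|h_{R_n}(z)-n h_R(\tfrac{1}{n}z)|\le\varepsilon n$ there. This decouples the argument from any Hausdorff relation between $\tilde R$ and $R$.
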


\begin{remark}
The assumptions in Lemma~\ref{lem_approx_micro_ent_dom} quantify the
imprecise statements that $\tilde R$, $\tfrac{1}{n} R_n$, and $\tfrac{1}{n} R_n$
respectively approximate $R$, $R$, and $\tilde R$ from inside.

If we take the simplicial approximation $K$ from Lemma~\ref{lem_approx_tri}
to be $\tilde R$, and its discretization
$K_n := \{ z \in \Z^m \,|\, \tfrac{1}{n} z \in K \}$ to be $\tilde R_n$,
and if we recall the Assumption~\ref{a_domain} about $R$ and $R_n$,
then the hypotheses of Lemma~\ref{lem_approx_micro_ent_dom} are satisfied,
and moreover, we may replace all instance of $\theta(\varepsilon)$
in the conclusion by $\theta(\varepsilon \ell)$.
\end{remark}

\begin{proof}
First, we prove the following inequalities:
\begin{equation} \label{e_approx_micro_ent_undercount}
	\bigl| B(R_n, h_R, \varepsilon) \bigr|
	\le (2^{|R_n|})^{\theta(\varepsilon) + \theta_\varepsilon(\tfrac{1}{n})}
		\bigl| B(\tilde R_n, h_R, \varepsilon) \bigr|
\end{equation}
and
\begin{equation} \label{e_approx_micro_ent_overcount}
	\bigl| B(R_n, h_R, \varepsilon) \bigr|
	\ge \bigl| B(\tilde R_n, h_R, \tfrac{c}{3} \varepsilon^2) \bigr| \,.
\end{equation}

For~\eqref{e_approx_micro_ent_undercount},
we note that every height function
$h_{R_n} \in B(R_n, h_R, \varepsilon)$
restricts to a height function
$h_{R_n}|_{\tilde R_n} \in B(\tilde R_n, h_R, \varepsilon)$.
Then, we claim that
\begin{equation} \label{e_approx_micro_ent_rn}
	|R_n| n^{-d}
	= \bigl( 1 + \theta(\varepsilon) \bigr) |R|
\end{equation}
and
\begin{equation} \label{e_approx_micro_ent_tilde_rn}
	|\tilde R_n| n^{-d}
	= \bigl( 1 + \theta(\varepsilon) \bigr) |\tilde R| \,.
\end{equation}
To justify~\eqref{e_approx_micro_ent_rn} we argue as follows.
Consider the continuum region
$R_n^\square := \bigcup_{z \in \R_n} ([0,\tfrac{1}{n}]^d + \tfrac{1}{n} z)$,
i.e.\@ the union of hypercubes of side length~$\tfrac{1}{n}$
translated by the points in $\tfrac{1}{n} R_n$.
Clearly $R_n^\square$ has Lebesgue measure equal to $|R_n| n^{-d}$,
and (like $\tfrac{1}{n} R_n$)
satisfies $d_H(R_n^\square, R) = \theta(\varepsilon)$.
This implies~\eqref{e_approx_micro_ent_rn}.
Equation~\eqref{e_approx_micro_ent_tilde_rn} is analogous.
Further arithmetic yields the equation
\begin{equation} \label{e_approx_micro_ent_ratio}
	\frac{|R_n|}{|\tilde R_n|}
	= \bigl( 1 + \theta(\varepsilon) \bigr)
		\bigl( 1 + \theta_\varepsilon(\tfrac{1}{n}) \bigr) \,,
\end{equation}
and then
\[
	|R_n \setminus \tilde R_n|
	= |R_n|
	\bigl( \theta(\varepsilon) + \theta_\varepsilon(\tfrac{1}{n}) \bigr) \,.
\]

Therefore the restriction map from $B(R_n, h_R, \varepsilon)$
to $B(\tilde R_n, h_R, \varepsilon)$ is at most
$(2^{|R_n|})^{\theta(\varepsilon) + \theta_\varepsilon(\tfrac{1}{n})}$-to-$1$.
Inequality~\eqref{e_approx_micro_ent_undercount} follows immediately.

\medskip

Now let us turn to~\eqref{e_approx_micro_ent_overcount}.
We want an injection
from $B(\tilde R_n, h_R, \tfrac{c}{3} \varepsilon^2)$
into $B(R_n, h_R, \varepsilon)$.
Fix a function
$h_{\tilde R_n} \in B(\tilde R_n, h_R, \tfrac{c}{3} \varepsilon^2)$;
we will construct an extension $h_{R_n} \in M(R_n)$.

Let
\[
	R_n'
	:= \big\{ z \in R_n \,\big|\,
		d(z, \tilde R_n) > \tfrac{\varepsilon}{3} n \bigr\} \,,
\]
where
\[
	d(z, \tilde R_n)
	:= \min_{\tilde z \in \tilde R_n} |z - \tilde z|_1 \,.
\]
For $z \in R_n'$, we arrange for
the extension to satisfy $|h_{R_n}(z) - n h_R(\tfrac{1}{n}z)| \le 1$.
When $n h_R(\tfrac{1}{n}z)$ is not an integer,
or is an integer but has the same parity as $z$,
this inequality uniquely determines the value of $h_{R_n}(z)$.
In the remaining case, there are two candidate values;
we arbitrarily choose to ``round down'' to the lower value.
Later it is important that we consistently round down (or up).

Let us check the hypotheses of the Kirszbraun theorem.
If $\tilde z \in \tilde R_n$ and if $z \in R_n'$,
then $|\tilde z - z|_1 > \tfrac{\varepsilon}{3} n$.
Therefore
\[ \begin{aligned}
	\hskip3em&\hskip-3em
	\bigl| h_{\tilde R_n}(\tilde z) - h_{R_n}(z) \bigr| \\
	&\le \bigl| h_{\tilde R_n}(\tilde z)
		- n h_R(\tfrac{1}{n} \tilde z) \bigr|
	+ n \bigl| h_R(\tfrac{1}{n} \tilde z) - h_R(\tfrac{1}{n} z) \bigr| \\
	&\qquad + \bigl| n h_R(\tfrac{1}{n} z) - h_{R_n}(z) \bigr| \\
	&\le \tfrac{c}{3} \varepsilon^2 n
		+ (1 - c \varepsilon) |\tilde z - z|_1
		+ 1 \\
	&< c \varepsilon |\tilde z - z|_1
		+ (1 - c \varepsilon) |\tilde z - z|_1
		+ 1 \\
	&< |\tilde z - z|_1 + 1 \\
	&\le |\tilde z - z|_1 \,.
\end{aligned} \]

The argument for points $z_1, z_2 \in R_n'$
is similar to the arguments made in Section~\ref{ss_canon_hf}.
By the triangle inequality,
$|h_{R_n}(z_1) - h_{R_n}(z_2)| \le |z_1 - z_2|_1 + 2$.
Equality holds only if both $nh_R(\tfrac{1}{n} z_1)$
and $nh_R(\tfrac{1}{n} z_2)$ are integers of the same parity as $z_1$ and $z_2$,
respectively.
In this case $h_{R_n}$ is rounded down at both points,
so the Kirszbraun inequality is still satisfied.

So, there exists an extension $h_{R_n}$ of $h_{\tilde R_n}$
such that $|h_{R_n}(z) - h_R(\tfrac{1}{n}z)| \le 1$ for $z \in R_n'$.
We claim that $h_{R_n} \in B(R_n, h_R, \varepsilon)$.
Since $\tfrac{c}{3} \varepsilon^2 \le \varepsilon$
and since $1 \le \varepsilon n$,
it suffices to consider points $z \in R_n \setminus R_n'$.
Fix such a $z$.
By the definition of $R_n'$, there exists $\tilde z \in \tilde R_n$
such that $|\tilde z - z| \le \tfrac{\varepsilon}{3} n$.
Note that $\tfrac{c}{3} \varepsilon^2 \le \tfrac{\varepsilon}{3}$,
since $c, \varepsilon \le 1$.
By the Lipschitz property of $h_R$ and $h_{R_n}$,
\[ \begin{aligned}
	h_{R_n}(z)
	&\le h_{R_n}(\tilde z) + \tfrac{\varepsilon}{3} n \\
	&= h_{\tilde R_n}(\tilde z) + \tfrac{\varepsilon}{3} n \\
	&\le n h_R(\tfrac{1}{n} \tilde z) + \tfrac{\varepsilon}{3} n
		+ \tfrac{c}{3} \varepsilon^2 n \\
	&\le n h_R(\tfrac{1}{n} \tilde z) + \tfrac{2\varepsilon}{3} n \\
	&\le n h_R(\tfrac{1}{n} z) + \varepsilon n \,.
\end{aligned} \]

By symmetry, $h_{R_n}(z) \ge n h_R(\tfrac{1}{n} z) - \varepsilon n$,
and so $h_{R_n} \in B(R_n, h_R, \varepsilon)$.
This extension process defines an injection
from $B(\tilde R_n, h_R, \tfrac{c}{3} \varepsilon^2)$
into $B(R_n, h_R, \varepsilon)$,
which proves~\eqref{e_approx_micro_ent_overcount}.

\medskip

Finally, we derive the conclusion from~\eqref{e_approx_micro_ent_undercount}
and~\eqref{e_approx_micro_ent_overcount} by taking logarithms and normalizing,
using~\eqref{e_approx_micro_ent_ratio} to account for the difference in
normalizing factors $-\frac{1}{|R_n|}$ and $-\frac{1}{|\tilde R_n|}$.
\end{proof}

\medskip

Now, let us prove the profile theorem (Theorem~\ref{th_profile}).
The main idea is straightforward:
we approximate $h_R$ by a piecewise affine function
(given by the simplicial Rademacher theorem, i.e.~Lemma~\ref{lem_approx_tri}),
for which we have already proven the simplicial profile theorem
(Theorem~\ref{thm_complex}).
Then we use the robustness results (Lemma~\ref{lem_approx_macro_ent},
Lemma~\ref{lem_approx_micro_ent_ahf}, and Lemma~\ref{lem_approx_micro_ent_ahf})
to deduce the profile theorem for $h_R$.
However, in order to apply Lemma~\ref{lem_approx_micro_ent_ahf}
we must first reduce to the case where the Lipschitz constant
$\Lip(h_R) := \inf \{\lambda > 0 \,|\, \text{$h_R$ is $\lambda$-Lipschitz} \}$
is strictly less than $1$.

\begin{proof}[Proof of Theorem~\ref{th_profile}]
First we reduce to the case where $\Lip(h_R) \le 1 - c \delta$,
for a constant $c > 0$ depending only on the domain $R$.
Then, we reduce to the piecewise affine case of Theorem~\ref{thm_complex}.

\medskip
\textbf{Reduction to $\Lip(h_R) \le 1 - c \delta$.}
By translation invariance, we may assume that there exists $x_0 \in R$
with $h_R(x_0) = 0$.
Set $c = \frac{1}{2 \diam_1 R} \wedge 1$, and define
\[
	\tilde h_R := (1 - c \delta) h_R \,.
\]

Then $\Lip(\tilde h_R) \le 1 - c \delta$,
and for all $x \in R$, both $|h_R(x) - \tilde h_R(x)| \le \tfrac{\delta}{2}$
and $|\nabla h_R(x) - \nabla \tilde h_R(x)| \le c \delta$.
Assume that the conclusion holds for $\tilde h_R$, i.e.~
\begin{equation} \label{e_profile_reduxlip_assn}
	\Ent_R(\tilde h_R)
	= \Ent_{R_n} \bigl( B(R_n, \tilde h_R, \delta) \bigr)
		+ \theta(\delta) + \theta_\delta(\tfrac{1}{n}) \,.
\end{equation}

We make two calculations. First,
\begin{align}
	\Ent_R(h_R)
	&\le \Ent_R(\tilde h_R) + \theta(\delta)
	&&\textit{(by Lemma~\ref{lem_approx_macro_ent})} \\
	&\le \Ent_{R_n} \bigl( B(R_n, \tilde h_R, 2 \delta) \bigr)
		+ \theta(\delta) + \theta_\delta(\tfrac{1}{n})
	&&\textit{(by~\eqref{e_profile_reduxlip_assn})} \\
	&\le \Ent_{R_n} \bigl( B(R_n, h_R, \delta) \bigr)
		+ \theta(\delta) + \theta_\delta(\tfrac{1}{n})
	&&\textit{(by Lemma~\ref{lem_approx_micro_ent_ahf})} \,.
\end{align}

Second,
\begin{align}
	\Ent_R(h_R)
	&\ge \Ent_R(\tilde h_R) + \theta(\delta)
	&&\textit{(by Lemma~\ref{lem_approx_macro_ent})} \\
	&\ge \Ent_{R_n} \bigl( B(R_n, \tilde h_R, \tfrac{1}{2} \delta) \bigr)
		+ \theta(\delta) + \theta_\delta(\tfrac{1}{n})
	&&\textit{(by~\eqref{e_profile_reduxlip_assn})} \\
	&\ge \Ent_{R_n} \bigl( B(R_n, h_R, \delta) \bigr)
		+ \theta(\delta) + \theta_\delta(\tfrac{1}{n})
	&&\textit{(by Lemma~\ref{lem_approx_micro_ent_ahf})} \,.
\end{align}

So once we prove~\eqref{e_profile_reduxlip_assn}
with the extra hypothesis that $\Lip(\tilde h_R) \le 1 - c\delta$,
the general result follows.

\medskip

\textbf{Reduction to piecewise linear height functions.}
We will apply Lemma~\ref{lem_approx_tri}
to derive a simplex domain $K$ and a piecewise linear height function $h_K$
approximating $R$ and $h_R$, then appeal to Theorem~\ref{thm_complex}.
In so doing we introduce two parameters:
$\varepsilon$, which controls how well $K$ approximates $R$,
and $\ell$, which controls the size of the simplices in $K$.
There are a few important properties of $\varepsilon$ and $\ell$.
First, $\delta = \varepsilon \ell$,
so there is actually only one degree of freedom.
Second, $\ell$ must be chosen to be sufficiently small,
as is required by the simplicial Rademacher theorem
(see Lemma~\ref{lem_approx_tri}).
Third, as $\delta \to 0$ we must have $\varepsilon \to 0$,
so that $\theta(\varepsilon) = \theta(\delta)$.

Let us describe explicitly how we choose $\varepsilon$ and $\ell$
satisfying these constraints.
We fix a sequence $\varepsilon_k \downarrow 0$
(e.g.\@ $\varepsilon_k = \tfrac{1}{k}$),
and for each $k$ set
\[
	\ell_k
	:= \tfrac{1}{2} \sup \bigl\{ \ell > 0 \,\big|\,
		\text{Lemma~\ref{lem_approx_tri} applies
			with $\varepsilon = \varepsilon_k$} \bigr\} \,.
\]

We call attention to the fact that Lemma~\ref{lem_approx_tri}
is monotone in $\ell$.
In particular, for any $\ell \le \ell_k$,
the conclusion of the lemma holds for $(\varepsilon_k, \ell)$.
Now, let
\[
	\delta_k :=
	\varepsilon_k \ell_k
	\; \wedge \;
	\tfrac{1}{2} \delta_{k-1} \,.
\]

The sequence $\delta_k$ is decreases to $0$,
so $\bigcup_{k=1}^\infty (\delta_{k+1}, \delta_k]$
is a non-trivial half-open interval with left endpoint at $0$.
We assume that $\delta$ lies in this interval.
Fix $k$ such that $\delta \in (\delta_{k+1}, \delta_k]$,
and set $\varepsilon = \varepsilon_k$
and $\ell = \tfrac{\delta}{\varepsilon}$.
Then $\delta = \varepsilon \ell$ by construction,
and as noted above $\ell$ is small enough
that the simplicial Rademacher theorem (see Lemma~\ref{lem_approx_tri}) applies.
As $\delta \to 0$ necessarily $k \to \infty$,
so $\varepsilon \to 0$ as desired.
Therefore this choice of $\varepsilon$ and $\ell$ satisfies our criteria.
Per Lemma~\ref{lem_approx_tri}, the corresponding simplex domain $K$
and piecewise affine asymptotic height function $h_K$ satisfy
\begin{equation} \label{e_profile_simpl_dom}
	|R \setminus K| < \varepsilon
	\quad \text{and} \quad
	d_H(K, R) < \varepsilon \,,
\end{equation}
\begin{equation} \label{e_profile_simpl_func}
	\max_{x \in K} |h_K(x) - h_R(x)| < \varepsilon \ell = \delta \,
\end{equation}
and
\begin{equation} \label{e_profile_simpl_grad}
	\frac{1}{|K|} \bigl\{ x \in K \,\big|\,
		|\nabla h_K(x) - \nabla h_R(x)|_2 \ge \varepsilon \bigr\}
	< \varepsilon \,.
\end{equation}

All that is left is to apply
the simplicial profile theorem (see Theorem~\ref{thm_complex})
and the robustness results (see Lemma~\ref{lem_approx_macro_ent},
Lemma~\ref{lem_approx_micro_ent_ahf}, and Lemma~\ref{lem_approx_micro_ent_ahf}).
In one direction, we have
\begin{align}
	\Ent_R(h_R)
	&\le \Ent_K(h_K) + \theta(\varepsilon)
	&&\textit{(Lemma~\ref{lem_approx_macro_ent}
		and \eqref{e_profile_simpl_grad})} \\
	&\le \Ent_{K_n} \bigl( B(K_n, h_K, \varepsilon \ell) \bigr)
	&&\textit{(Theorem~\ref{thm_complex})} \\
	&\quad+ \theta(\varepsilon) + \theta_{\varepsilon,\ell}(\tfrac{1}{n}) \\
	&\le \Ent_{K_n} \bigl( B(K_n, h_R, \tfrac{1}{2} \varepsilon \ell) \bigr)
	&&\textit{(Lemma~\ref{lem_approx_micro_ent_ahf}
		and~\eqref{e_profile_simpl_func})} \\
	&\quad+ \theta(\varepsilon) + \theta_{\varepsilon,\ell}(\tfrac{1}{n}) \\
	&\le \Ent_{R_n} \bigl( B(R_n, h_R, \tfrac{1}{2} \varepsilon \ell) \bigr)
	&&\textit{(Lemma~\ref{lem_approx_micro_ent_dom}
		and \eqref{e_profile_simpl_dom})} \\
	&\quad+ \theta(\varepsilon) + \theta_{\varepsilon,\ell}(\tfrac{1}{n}) \\
	&= \Ent_{R_n} \bigl( B(R_n, h_R, \tfrac{1}{2} \delta) \bigr)
	&&\textit{(choice of $\varepsilon$ and $\ell$)} \\
	&\quad+ \theta(\delta)
		+ \theta_\delta(\tfrac{1}{n}) \,.
\end{align}

By taking $\delta' = \tfrac{1}{2} \delta$, this yields
\begin{equation} \label{e_profile_simpl_le}
	\Ent_R(h_R)
	\le \Ent{R_n} \bigl( B(R_n, h_R, \delta') \bigr)
		+ \theta(\delta') + \theta_{\delta'}(\tfrac{1}{n}) \,.
\end{equation}

\smallskip

In the other direction,
\begin{align}
	\Ent_R(h_R)
	&\ge \Ent_K(h_K) + \theta(\varepsilon)
	&&\textit{(Lemma~\ref{lem_approx_macro_ent}
		and \eqref{e_profile_simpl_grad})} \\
	&\ge \Ent_{K_n} \bigl( B(K_n, h_K, \varepsilon \ell) \bigr)
	&&\textit{(Theorem~\ref{thm_complex})} \\
	&\quad+ \theta(\varepsilon) + \theta_{\varepsilon,\ell}(\tfrac{1}{n}) \\
	&\ge \Ent_{K_n} \bigl( B(K_n, h_R, 2 \varepsilon \ell) \bigr)
	&&\textit{(Lemma~\ref{lem_approx_micro_ent_ahf}
		and~\eqref{e_profile_simpl_func})} \\
	&\quad+ \theta(\varepsilon) + \theta_{\varepsilon,\ell}(\tfrac{1}{n}) \\
	&\ge \Ent_{R_n} \bigl(
		B(R_n, h_R, \tfrac{6}{c} (\varepsilon \ell)^{1/2}) \bigr)
	&&\textit{(Lemma~\ref{lem_approx_micro_ent_dom}
		and \eqref{e_profile_simpl_dom})} \\
	&\quad+ \theta(\varepsilon) + \theta_{\varepsilon,\ell}(\tfrac{1}{n}) \\
	&= \Ent_{R_n} \bigl( B(R_n, h_R, \tfrac{6}{c} \delta^{1/2}) \bigr)
	&&\textit{(choice of $\varepsilon$ and $\ell$)} \\
	&\quad+ \theta(\delta)
		+ \theta_\delta(\tfrac{1}{n}) \,.
\end{align}

By taking $\delta'' = \tfrac{6}{c} \delta^{1/2}$, this yields
\begin{equation} \label{e_profile_simpl_ge}
	\Ent_R(h_R)
	\le \Ent{R_n} \bigl( B(R_n, h_R, \delta'') \bigr)
		+ \theta(\delta') + \theta_{\delta''}(\tfrac{1}{n}) \,.
\end{equation}

\smallskip

Combining~\eqref{e_profile_simpl_le} and~\eqref{e_profile_simpl_ge}
completes the proof of the profile theorem.
\end{proof}

\section{Proof of the variational principle} \label{s_varnprin}

Besides the profile theorem (Theorem~\ref{th_profile}),
the proof of the variational principle (Theorem~\ref{th_varnprin})
relies on compactness of the space of asymptotic height functions.
For robustness, we give a proof that does not assume
that the macroscopic entropy functional admits a minimum.
Note that the existence of such a minimizer is standard
as soon as the local surface tension is convex and bounded below;
see for example Section~2 of~\cite{CKP01} or~\cite{She05}.
However, for greater generality we work with
the infimum of the macroscopic entropy
and we do not assume that a minimizer exists.
At any rate, it will be necessary to deal with infima (rather than minima)
later when proving the large deviations principle.

\begin{proof}[Proof of Theorem~\ref{th_varnprin}]
First, we shall prove that
\begin{equation} \label{e_variational_overest}
	\inf_{h_R \in M(R, h_{\partial R})} \Ent_R(h_R)
	\ge \Ent_{R_n} \bigl( M(R_n, h_{\partial R_n}, \delta) \bigr)
	+ \theta(\delta) + \theta_\delta(\tfrac{1}{n})
\end{equation}
via undercounting the number of height functions
in $M(R_n, h_{\partial R_n}, \delta)$.
The strategy is simple: we only count those height functions
that are close to a ``near-minimizer'' of the  macroscopic entropy.
If we assume that a minimizer exists,
i.e.\@ that there exists $h_R^\texttt{min} \in M(R, h_{\partial R})$
such that
\[
	\Ent_R(h_R^\texttt{min})
	= \inf_{h_R \in M(R, h_{\partial R})} \Ent_R(h_R) \,,
\]
then the following proof suffices.
For any $\delta > 0$ and $n \in \N$,
the Definition~\ref{d_ht_func_sets} implies that
\[
	B(R_n, h_R^\texttt{min}, \delta)
	\subseteq M(R_n, h_{\partial R_n}, 2\delta) \,.
\]
It follows immediately that
\[
	\Ent_{R_n} \bigl( B(R_n, h_R^\texttt{min}, \delta) \bigr)
	\ge \Ent_{R_n} \bigl( M(R_n, h_{\partial R_n}, 2\delta) \bigr) \,,
\]
so after applying the profile theorem and replacing $2\delta$ by $\delta$,
\[
	\Ent_{R_n} \bigl( M(R_n, h_{\partial R_n}, \delta) \bigr)
	\le \inf_{h \in M(R_, h_{\partial R})} \Ent_R(h)
	+ \theta(\delta) + \theta_\delta(\tfrac{1}{n}) \,.
\]

However, as mentioned above we want to give a proof that does not rely
on the existence of a minimizer.
This idea is also important for proving the large deviation principle below
(see the paragraphs following~\eqref{e_ldp_lower} below).
The first step is to replace $h_R^\texttt{min}$
by a sequence of approximations, say $h_R^{(k)}$ satisfying
\[
	\Ent_R(h_R^{(k)})
	\le \inf_{h_R \in M(R, h_{\partial R})} \Ent(h_R) + \tfrac{1}{k}
	\quad \text{for $k \in \N$} \,.
\]

Also let $\theta^{(k)}(\delta)$ and $\theta_\delta^{(k)}(\tfrac{1}{n})$
denote the $\theta$ terms from the profile theorem (Theorem~\ref{th_profile})
for the height function $h_R^{(k)}$.
At this point one may be tempted to simply take the limit $k \to \infty$
for fixed $\delta$ and $n$.
The problem is that the sequence $\theta^{(k)}(\delta)$
is not necessarily controlled as $k$ goes to infinity,
and could in general diverge for any fixed $\delta > 0$,
and likewise for $\theta_\delta^{(k)}(\frac{1}{n})$.
To correct this, we proceed as follows.

Let $\delta_0 = +\infty$ and $n_0 = 0$.
For $k \ge 1$, choose $\delta_k$ such that
\[
	0 < \delta_k \le \tfrac{1}{2} \delta_{k-1}
	\quad \text{and} \quad
	\theta^{(k)}(\delta_k) \le \frac{1}{k} \,.
\]

Now let $\delta > 0$ and $n \in \N$ be given.
Fix $k$ such that $\delta \in (\delta_k, \delta_{k-1}]$.
Note that this is possible since $\delta_k \le \tfrac{1}{2} \delta_{k-1}$
forces $\delta_k \to 0$ as $k \to \infty$.
Since $\delta_k < \delta$, we have
\[
	\Ent_{R_n} \bigl( B(R_n, h_{\partial R_n}, \delta) \bigr)
	\le \Ent_{R_n} \bigl( B(R_n, h_{\partial R_n}, \delta_k) \bigr) \,.
\]

By the profile theorem (Theorem~\ref{th_profile}) applied to $h_R^{(k)}$,
\[
	\Ent_{R_n} \bigl( B(R_n, h_{\partial R_n}, \delta_k) \bigr)
	\le \Ent_R(h_R^{(k)}) + \theta^{(k)}(\delta_k)
		+ \theta_{\delta_k}^{(k)}(\tfrac{1}{n}) \,.
\]
By choice of $h_R^{(k)}$, we have
\[
	\Ent_R(h_R^{(k)})
	\le \inf_{h_R \in M(R, h_{\partial R})} \Ent_R(h_R) + \frac{1}{k} \,.
\]
By choice of $\delta_k$,
\[
	\theta^{(k)}(\delta_k) \le \frac{1}{k} \,,
\]
Finally, since $k$ and $\delta_k$ are determined from $\delta$,
\[
	\theta_{\delta_k}^{(k)}(\tfrac{1}{n})
	= \theta_\delta(\tfrac{1}{n})
	\quad \text{and} \quad
	\frac{1}{k} = \theta(\delta) \,.
\]
Putting it all together, we have
\begin{equation} \begin{aligned}
	\Ent_{R_n} \bigl( B(R_n, h_{\partial R_n}, \delta) \bigr)
	&\le \inf_{h_R \in M(R, h_{\partial R})} \Ent_R(h_R)
		+ \frac{2}{k} + \theta_{\delta_k}^{(k)}(\tfrac{1}{n}) \\
	&= \inf_{h_R \in M(R, h_{\partial R})} \Ent_R(h_R)
		+ \theta(\delta) + \theta_\delta(\tfrac{1}{n}) \,.
\end{aligned} \end{equation}

\medskip

Now we prove the reverse inequality, namely
\begin{equation} \label{e_variational_underest}
	\inf_{h_R \in M(R, h_{\partial R})} \Ent_R(h_R)
	\le \Ent_{R_n} \bigl( M(R_n, h_{\partial R_n}, \delta) \bigr)
		+ \theta(\delta) + \theta_\delta(\tfrac{1}{n}) \,.
\end{equation}

Let $\varepsilon > 0$.
For each $h_R \in M(R, h_{\partial R}, 2\delta)$,
by the profile theorem (Theorem~\ref{th_profile})
\begin{equation} \label{e_variational_limit} \begin{aligned}
	\Big| \Ent_{R_n} \bigl( B(R_n, h_R, \delta) \bigr) - \Ent_R(h_R) \Bigr|
	\,&<\, \theta(\delta) + \theta_\delta(\tfrac{1}{n}) \,.
\end{aligned} \end{equation}
For each $h_R \in M(R, h_{\partial R})$,
fix $\eta(h_R) > 0$ such that
the $\theta(\delta)$ term in~\eqref{e_variational_limit} satisfies
\begin{equation} \label{e_variational_def_eta}
	\theta(\eta(h_R)) \le \frac{\varepsilon}{2} \,.
\end{equation}

Recall from Definition~\ref{d_asymp_ht_func_sets} that
\begin{equation} \label{e_variational_compact} \begin{aligned}
	\hskip3em&\hskip-3em
	M(R, h_{\partial R}, 2\delta) \\
	&:= \bigl\{ h_R \in \Lip(R) \, \big| \, \forall x \in \partial R, \,
		\, | h(x) - h_{\partial R}(x) | \le 2 \delta \bigr\} \,.
\end{aligned} \end{equation}
This set is compact as an easy consequence of the Arzel\`a--Ascoli theorem.
Choose $h_R^{(1)} \in M(R, h_{\partial R})$ such that
\[
	\Ent_R(h_R^{(1)})
	\le \inf_{h_R \in M(R, h_{\partial R})} \Ent_R(h_R) + \varepsilon \,,
\]
and pick $h_R^{(2)}, \dotsc, h_R^{(k)}$
so that the union $\bigcup_{i=1}^k B(R, h_R^{(i)}, \eta(h_R^{(i)}))$
covers $M(R, h_{\partial R}, 2\delta)$,
where $B(R, h_R^{(i)}, \eta(h_R^{(i)}))$
is the set of asymptotic height functions $h_R \in M(R)$
that are uniformly within distance $\eta(h_R^{(i)})$ of $h_R^{(i)}$.
Note that the number $k$ of sets in this cover depends only on $\delta$.
We abbreviate $\eta_i := \eta(h_R^{(i)})$.
Moreover, we fix $n_i \in \N$ such that for all $n \ge n_i$,
the $\theta_\delta(\tfrac{1}{n})$ from~\eqref{e_variational_limit} satisfies
\begin{equation} \label{e_variational_def_n}
	\theta_{\eta_i}(\tfrac{1}{n}) \le \varepsilon \,.
\end{equation}

We use the cover of the set $M(R, h_{\partial R}, 2\delta)$
to cover the set of height functions $M(R_n, h_{\partial R_n}, \delta)$.
Indeed, consider an arbitrary height function
$h_{R_n} \in M(R_n, h_{\partial R_n}, \delta)$.
After rescaling and interpolating (via the classical Kirszbraun theorem),
we identify $h_{R_n}$ with a continuous function
in $M(R, h_{\partial R}, 2\delta)$.
Under this identification,
\[
	M(R_n, h_{\partial R_n}, \delta)
	\subseteq M(R, h_{\partial R}, 2\delta) \\
	\subseteq \bigcup_{i=1}^k B(R, h_R^{(i)}, \eta_i) \,.
\]
This means that for any discrete height function
$h_{R_n} \in M(R_n, h_{\partial R_n}, \delta)$
with continuous (rescaled) interpolation
$\tilde h_{R_n} \in M(R, h_{\partial R}, 2\delta)$
(note that $\delta$ increases to $2\delta$ from discretization errors),
there is $i \in \{1, \dotsc, k\}$ such that
$\sup_{x \in R} |\tilde h_{R_n}(x) - h_R^{(i)}(x)| < \eta_i$.
By Definition~\ref{d_ht_func_sets}, it follows that

\begin{equation} \label{e_variational_cover}
	M(R_n, h_{\partial R_n}, \delta)
	\subseteq \bigcup_{i=1}^k B(R_n, h_R^{(i)}, \eta_i) \,.
\end{equation}
Hence,
\begin{equation} \label{e_variational_sum} \begin{aligned}
	\Ent_{R_n} \bigl( M(R_n, h_{\partial R_n}, \delta) \bigr)
	&\ge \Ent_{R_n} \left( \,
		\bigcup_{i=1}^k \, B(R_n, h_R^{(i)}, \eta_i)
	\right) \\
	&\ge -\frac{1}{|R_n|} \ln \left(
		\sum_{i=1}^k \, \bigl| B(R_n, h_R^{(i)}, \eta_i) \bigr|
	\right)
	\,.
\end{aligned} \end{equation}

Let us estimate $|B(R_n, h_R^{(i)}, \eta_i)|$.
Assuming that $n$ is larger than the constants $n_1, \dotsc, n_k$,
then for all $i=1, \dotsc, k$,
\begin{align*}
	\hskip3em&\hskip-3em
	\Ent_{R_n} \bigl( B(R_n, h_R^{(i)}, \eta_i) \bigr) \\
	&\ge \Ent_R ( h_R^{(i)} ) - 2\varepsilon
	&&\textit{(By \eqref{e_variational_limit},
		\eqref{e_variational_def_eta},
		and \eqref{e_variational_def_n})} \\
	&\ge \inf_{h_R \in M(R, h_{\partial R})} \Ent_R(h_R) - 2 \varepsilon \\
	&\ge \Ent_R ( h_R^{(1)} ) - 3\varepsilon \\
	&\ge \Ent_{R_n} \bigl( B(R_n, h_R^{(1)}, \eta_1) \bigr)
		- 5 \varepsilon
	&&\textit{(By \eqref{e_variational_limit},
		\eqref{e_variational_def_eta},
		and \eqref{e_variational_def_n})} \,.
\end{align*}
In other words,
\[
	\bigl| B(R_n, h_R^{(i)}, \eta_i ) \bigr|
	\le \bigl| B(R_n, h_R^{(1)}, \eta_1) \bigr|
		e^{5 \varepsilon |R_n|} \,.
\]

We apply this last estimate in~\eqref{e_variational_sum} to derive
\begin{equation} \label{e_variational_concl_eps} \begin{aligned}
	\Ent_{R_n} \bigl( M(R_n, h_{\partial R_n}, \delta) \bigr)
	&\ge -\frac{1}{|R_n|} \ln \biggl(
		k
		\bigl| B(R_n, h_R^{(1)}, \eta_1) \bigr|
		e^{5 \varepsilon |R_n|}
	\biggr) \\
	&= -\frac{1}{|R_n|} \ln \bigl| B(R_n, h_R^{(1)}, \eta_1) \bigr|
		- \frac{\ln k}{|R_n|} - 5\varepsilon \\
	&= -\frac{1}{|R_n|} \ln \bigl| B(R_n, h_R^{(1)}, \eta_1) \bigr|
		- \theta_\delta(\tfrac{1}{n}) - 5\varepsilon \\
	&= -\inf_{h_R \in M(R, h_{\partial R})} \Ent_R(h_R)
		- \theta_\delta(\tfrac{1}{n}) - 6\varepsilon \,.
\end{aligned} \end{equation}
Here, note that since $k$ depends only on $\delta$,
$\tfrac{k}{|R_n|} = \theta_\delta(\tfrac{1}{n})$.
Because $\varepsilon > 0$ was arbitrary,
this yields the desired estimate~\eqref{e_variational_underest}.
\end{proof}

\section{Large deviations principle} \label{s_ldp}

In this section we prove Theorem~\ref{th_ldp}, the large deviations principle.
For the reader's convenience, we recall the following definitions
from the statement of the theorem in Section~\ref{ss_main_results}.
For $\delta > 0$, $n \in \N$, and $h_R \in M(R)$:
\[ \begin{aligned}
	\mu_{\delta,n} &:= \frac{1}{|M(R_n, h_{\partial R_n}, \delta)|}
		\, \bigl| \bigl\{ h_{R_n} \in M(R_n, h_{\partial _n}, \delta)
			\,\big|\, \tilde h_{R_n} \in A \bigr\} \bigr| \,, \\
	r_{\delta,n} &:= |R_n| \,, \\
	I(h_R) &:= \begin{cases}
		\Ent_R(h_R) - E
		&\text{if $h_R|_{\partial R} = h_{\partial R}$} \,, \\
		\infty
		&\text{otherwise} \,,
	\end{cases}
\end{aligned} \]
where $E = \inf_{h_R \in M(R, h_{\partial R})} \Ent_R(h_R)$.

The proof of the large deviations principle that we give here
is based on the proof of the variational principle,
Theorem~\ref{th_varnprin}, given in Section~\ref{s_varnprin}.
We encourage the reader to read Section~\ref{s_varnprin} first.

\begin{proof}[Proof of Theorem~\ref{th_ldp}]

First, we prove the LDP lower bound~\eqref{e_ldp_lower_literal}, i.e.
\[
	- \inf_{h_R \in \interior{A}} I(h_R)
	\le \varliminf_{\delta \to 0} \varliminf_{n \to \infty}
		\frac{1}{r_{\delta,n}} \log \mu_{\delta,n}(A) \,.
\]

Without loss of generality we may assume that $A$ is open.
We may assume also that $\inf_{h_R \in A} I(h_R) < \infty$,
or else~\eqref{e_ldp_lower_literal} is trivial.
By using these assumptions
and replacing the symbols $\mu_{\delta,n}$, $r_{\delta,n}$, and $I(h_R)$
by their definitions,
\eqref{e_ldp_lower_literal} simplifies to
\begin{equation} \begin{aligned}
	\hskip3em&\hskip-3em
	-\inf_{h_R \in A}
		\Bigl( \Ent_R(h_R) - E \Bigr) \\
	&\le \varliminf_{\delta \to 0} \varliminf_{n \to \infty}
			\frac{\bigl|\bigl\{
				h_{R_n} \in M(R_n, h_{\partial R_n}, \delta)
				\big| \tilde h_{R_n} \in A
			\bigr| \bigr\}}{|M(R_n, h_{\partial R_n}, \delta)|}
		\Biggr) \,.
\end{aligned} \end{equation}

Simplifying further by multiplying by $-1$ and using our definition of the
microscopic entropy~\eqref{e_micro_ent},
it suffices to prove
\begin{equation} \label{e_ldp_lower_precancel} \begin{aligned}
	\hskip3em&\hskip-3em
	\inf_{h_R \in A} \Ent_R(h_R) - E \\
	&\ge \varlimsup_{\delta \to 0} \varlimsup_{n \to \infty}
		\Ent_{R_n} \bigl( \bigl\{
			h_{R_n} \in M(R_n, h_{\partial R_n}, \delta)
			\big| \tilde h_{R_n} \in A \bigr\} \bigr) \\
	&\qquad - \varliminf_{\delta \to 0} \varliminf_{n \to \infty}
		\Ent_{R_n} \bigl( M(R_n, h_{\partial R_n}, \delta) \bigr) \,.
\end{aligned} \end{equation}

By the variational principle (Theorem~\ref{th_varnprin}),
\[
	\varliminf_{\delta \to 0} \varliminf_{n \to \infty}
		\Ent_{R_n} \bigl( M(R_n, h_{\partial R_n}, \delta) \bigr)
	= E \,.
\]
After cancelling the corresponding terms in~\eqref{e_ldp_lower_precancel},
and after replacing $\varlimsup$ by our preferred $\theta$ asymptotics,
it suffices to show that
\begin{equation} \label{e_ldp_lower} \begin{aligned}
	\inf_{h_R \in A} \Ent_R(h_R)
	&\ge \Ent_{R_n} \bigl( \bigl\{
		h_{R_n} \in M(R_n, h_{\partial R_n}, \delta)
		\big| \tilde h_{R_n} \in A \bigr\} \bigr) \\
	&\qquad + \theta_A(\delta) + \theta_{A,\delta}(\tfrac{1}{n}) \,.
\end{aligned} \end{equation}

Note the analogy between~\eqref{e_ldp_lower}
and inequality~\eqref{e_variational_overest}
from the proof of the variational principle.
Indeed, we prove~\eqref{e_ldp_lower} in a similar manner
to~\eqref{e_variational_overest}.
We fix a sequence of asymptotic height function~$h_R^{(k)} \in A$
that saturates the infimum;
for concreteness, let us take
\[
	\Ent_R(h_R^{(k)})
	\le \inf_{h_R \in A} \Ent_R(h_R) + \frac{1}{k} \,.
\]

Write $\theta^{(k)}(\delta)$ and $\theta_\delta^{(k)}(\tfrac{1}{n})$
for the error terms from the profile theorem for $h_R^{(k)}$.
Choose a decreasing sequence $\delta_k$ such that
$\delta_k \le \tfrac{1}{2} \delta_{k-1}$
and such that $\theta^{(k)}(\delta_k) \le \frac{1}{k}$.
Given $\delta > 0$, choose $k$ such that $\delta \in (\delta_k, \delta_{k-1}]$;
then by the profile theorem applied to $h_R^{(k)}$,
\[ \begin{aligned}
	\hskip3em&\hskip-3em
	\Ent_{R_n} \bigl( \bigl\{
		h_{R_n} \in M(R_n, h_{\partial R_n}, \delta)
		\,\big|\, \tilde h_{R_n} \in A \bigr\} \bigr) \\
	&\le \Ent_{R_n} \bigl( B(R_n, h_R^{(k)}, \delta_k) \bigr) \\
	&\le \inf_{h_R \in A} \Ent_R(h_R) + \frac{2}{k}
		+ \theta_{\delta_k}^{(k)}(\tfrac{1}{n}) \\
	&= \inf_{h_R \in A} \Ent_R(h_R)
		+ \theta(\delta) + \theta_\delta(\tfrac{1}{n}) \,.
\end{aligned}\]

\medskip

Now, we turn to the LDP upper bound~\eqref{e_ldp_upper_literal}, i.e.
\[
	\varlimsup_{\delta \to 0} \varlimsup_{n \to \infty}
		\frac{1}{r_{\delta,n}} \log \mu_{\delta,n}(A)
	\le - \inf_{h_R \in \closure{A}} I(h_R) \,,
\]

We observe that $(\mu_{\delta,n})_{\delta,n}$ is exponentially tight,
i.e.~that for every $b \in (0, \infty)$,
there exists $K_b \subset M(R)$ such that
\[
	\varlimsup_{\delta \to 0} \varlimsup_{n \to \infty}
		\frac{1}{r_{\delta,n}} \log \mu_{\delta,n}(K_b^c)
	\le -b \,.
\]
Indeed, we may take $K_b$ to be the closure of $M(h_R, h_{\partial R}, 1)$,
independent of $b$.
For $\delta < \tfrac{1}{3}$ and $n$ large enough that
\[
	\max_{z \in \partial R_n} \bigl|
		\tfrac{1}{n}h_{\partial R_n}(z)
			- h_{\partial R} \bigl( \tfrac{1}{n}z \bigr)
	\bigr|
	\le \tfrac{1}{3} \,,
\]
any $h_{R_n} \in M(R_n, h_{\partial R}, \delta)$
satisfies $\tilde h_{R_n} \in M(h_R, h_{\partial R}, 1)$\
by the triangle inequality,
so $\mu_{\delta,n}(K_b^c) = 0$.
By the general theory of large deviations, exponential tightness implies that
it is sufficient prove the upper bound~\eqref{e_ldp_upper_literal}
for compact sets $A \subset M(R)$.

If $\inf_{h_R \in A} I(h_R) = \infty$,
then every height function in $A$
differs from $h_{\partial R}$ at some point on the boundary.
In fact by compactness, there exists $\delta_0$ such that
for every $h_R \in A$,
$\sup_{x \in \partial R} |h_{\partial R}(x) - h_R(x)| \ge \delta_0$.
Clearly, as in the proof of exponential tightness above,
this implies that
$\{ h_{R_n} \in M(R_n, h_{\partial R_n}, \delta) \,|\, \tilde h_{R_n} \in A \}$
is empty once $\delta$ is small enough and $n$ large enough.
For all such $\delta,n$ we have $\mu_{\delta,n}(A) = 0$
and~\eqref{e_ldp_upper_literal} follows.

It remains to prove the upper bound~\eqref{e_ldp_upper_literal}
when $\inf_{h_R \in A} I(h_R) < \infty$ and $A$ is compact.
Just like for the lower bound before,
we reduce to proving the following inequality:
\begin{equation} \label{e_ldp_upper} \begin{aligned}
	\inf_{h_R \in A} \Ent_R(h_R)
	&\le \Ent_{R_n} \bigl( \bigl\{
		h_{R_n} \in M(R_n, h_{\partial R_n}, \delta)
		\big| \tilde h_{R_n} \in A \bigr\} \bigr) \\
	&\qquad + \theta_A(\delta) + \theta_{A,\delta}(\tfrac{1}{n}) \,.
\end{aligned} \end{equation}

We will closely follow the proof of~\eqref{e_variational_underest}
from Theorem~\ref{th_varnprin}.
Let $\varepsilon > 0$,
and choose $h_R^{(1)}$
such that
\begin{equation} \label{e_ldp_upper_inf}
	\Ent_R(h_R^{(1)}) \le \inf_{h_R \in A} \Ent_R(h_R) + \varepsilon \,.
\end{equation}
As in~\eqref{e_variational_cover},
choose $h_R^{(2)}, \dotsc, h_R^{(k)}$ such that
\[
	A \subset \bigcup_{i=1}^k B(R, h_R^{(i)}, \eta_i) \,,
\]
where $\eta_1, \dotsc, \eta_k$ are chosen so that for each $i$,
the $\theta(\delta)$ term from the profile theorem for $h_R^{(i)}$
satisfies $\theta(\eta_i) \le \varepsilon$.
Exactly as in the proof of Theorem~\ref{th_varnprin}
(see~\eqref{e_variational_sum}),
\[ \begin{aligned}
	\hskip3em&\hskip-3em
	\Ent_{R_n} \bigl( \bigl\{
		h_{R_n} \in M(R_n, h_{\partial R_n}, \delta)
		\,\big|\, \tilde h_{R_n} \in A \bigr\} \bigr) \\
	&\ge -\frac{1}{|R_n|} \ln
		\sum_{i=1}^k \, \bigl| B(R_n, h_R^{(i)}, \eta_i) \bigr| \,.
\end{aligned} \]
From this we deduce the analogue of~\eqref{e_variational_concl_eps}, namely
\[ \begin{aligned}
	\hskip3em&\hskip-3em
	\Ent_{R_n} \bigl( \bigl\{
		h_{R_n} \in M(R_n, h_{\partial R_n}, \delta)
		\,\big|\, \tilde h_{R_n} \in A \bigr\} \bigr) \\
	&\ge \Ent_R(h_R^{(1)})
		+ \theta_{A,\delta}(\tfrac{1}{n}) - 5\varepsilon \\
	&\ge \inf_{h_R \in A} \Ent_R(h_R)
		+ \theta_{A,\delta}(\tfrac{1}{n}) - 6\varepsilon \,.
\end{aligned} \]
\end{proof}

\section*{Acknowledgment}
The authors want to thank
Tim Austin, Marek Biskup, Thomas Liggett, Igor Pak, Greta Panova
for the fruitful discussions and helpful comments.
This research has been partially supported by NSF grant DMS-1712632.

\bibliographystyle{alpha}
\bibliography{bib}

\end{document}